\documentclass[reqno]{amsart}
\usepackage[utf8]{inputenc}
\usepackage{hyperref}
\usepackage{amsthm,amsmath,amssymb}
\usepackage{mathrsfs}
\usepackage{amsfonts}
\usepackage{tikz}
\usepackage{tkz-euclide}
\usepackage{amsthm}
\usepackage{amsmath,amscd}
\usepackage[all]{xy} 
\usepackage{graphicx}

\title[Combinatorial Rigidity]{On the Combinatorial Rigidity for Polynomials with Attracting Cycles}

\author{Yueyang Wang}
\thanks{Supported by the Fundamental Research Funds for the Central Universities}
\address{School of Mathematics, Shanghai University of Finance and Economics, Shanghai, 200433, China}
\email{yueyangwang@icloud.com}

\keywords{combinatorial rigidity, rational lamination, puzzles}
\subjclass[2020]{Primary 37F20; Secondary 37F10, 37F46}
\date{\today}

\newtheorem{thm}{Theorem}[section]
\newtheorem{cor}[thm]{Corollary}
\newtheorem{prop}[thm]{Proposition}
\newtheorem{lem}[thm]{Lemma}
\newtheorem{fact}[thm]{Fact}

\theoremstyle{remark}
\newtheorem{clm}{Claim}

\theoremstyle{definition}
\newtheorem{defi}{Definition}[section]

\newtheorem{exam}{Example}[section]

\def\f{f}
\def\a{{\mathcal a}}

\def\RZ{\mathbb{S}}
\def\Q{\mathbb{Q}}
\def\QZ{\Q/\Z}

\def\P{{\mathcal{P}_{d}}}

\def\C{\widehat{\mathbb C}}
\def\D{\mathbb{D}}

\def\L{\lambda}
\def\F{{\rm Fiber}}
\def\R{\mathbb{R}}
\def\C{\mathbb{C}}
\def\Z{\mathbb{Z}}

\def\AA{\mathcal{A}}

\def\b{{\mathbf{a}_2}}
\def\f{\mathbf{f}}
\def\comb{\mathbf{Comb}}
\def\qc{\mathbf{QC}}
\def\ca{\mathcal{C}(\AA)}

\def\CC{{\widehat{\mathbb{C}}}}
\def\a{{\bf a}}

\def\LL{\Delta}
\def\F{\mathcal{F}_d}

\def\RRR{R^0}

\def\I{\mathcal{I}}
\def\crit{\mathrm{Crit}}

\begin{document}

\maketitle

\begin{abstract}
    We show that every polynomial of degree $d \geq 2$ in the connectedness locus with an attracting cycle which attracts at least two critical points and no indifferent cycles is not combinatorially rigid.
    In particular, we prove that a hyperbolic polynomial with connected Julia set is combinatorially rigid if and only if it is of the ``disjoint type''.
\end{abstract}

\section{Introduction}


\subsection{Backgrounds}
We study the dynamical system of the iterations of polynomial maps.
Let $\P$ be the space of \emph{monic} and \emph{centered} polynomials in the form
$$
    f(z)=z^d+a_{d-2}z^{d-2}+\cdots+a_1z+a_0
$$
of degree $d \geq 2$ where $(a_0,a_1,\dots,a_{d-2}) \in \C^{d-1}$.

A useful tool in studying polynomial dynamics is the theory of external rays.
Let $\mathcal{C}_d$ denote the \emph{connectedness locus} of $\P$ which consists of polynomials in $\P$ with connected Julia sets.
For every $f \in \P$, there exists a canonical B\"ottcher map $\varphi_f$  which is a conformal map from the basin of $\infty$ to $\C\setminus \overline{\D}$ fixing $\infty$ and satisfies the properties that
$$
    \lim_{z\to \infty}\frac{\varphi_f(z)}{z} =1, \qquad \varphi_f(f(z))=(\varphi_f(z))^d.
$$
For every $\theta \in \RZ:=\mathbb{R}/\mathbb{Z}$, the \emph{external ray} with angle $\theta$ is defined to be
$$
    R_f(\theta):=\varphi_f^{-1}((1,+\infty)e^{2\pi i \theta}).
$$
We say that the external ray $R_f(\theta)$ \emph{lands} at $z$ if $\varphi_f^{-1}(re^{2\pi i \theta}) \to z$ as $r \to 1$.
A well-known fact is that if $f \in \mathcal{C}_d$, then every external ray with rational angles lands at a parabolic or repelling preperiodic point in the Julia set.
Conversely, every parabolic and repelling preperiodic point is the landing point of at least one external ray with a rational angle. 

We are interested in the combinatorics which encodes the landing properties of external rays.
McMullen \cite{McMullen1995TheCO} introduces following definition.

\begin{defi}[rational lamination]
     For $f \in \mathcal{C}_d$, the \emph{rational lamination} $\L_\Q(f)$ of $f$ is the equivalence relation defined on $(\Q/\Z)^2$ such that $(\theta,\theta') \in \L_\Q(f)$ if and only if the external rays $R_f(\theta)$ and $R_f(\theta')$ land at a common point.
\end{defi}

The rational lamination and related topics are intensively studied by Kiwi \cite{kiwi2001rational,wanderingorbitportrait,KIWI2004207,CombinatorialContinuity}, and Inou-Kiwi \cite{inou2012combinatorics,inou2009combinatorics}.

Rigidity is an interesting phenomenon in complex dynamics.
According to McMullen \cite{McMullen1995TheCO}, we have the following definition.

\begin{defi}[combinatorial rigidity]
    A polynomial $f \in \mathcal{C}_d$ without indifferent cycles is \emph{combinatorially rigid}, if for any $g \in \mathcal{C}_d$ with no indifferent cycles and $\L_\Q(f)=\L_\Q(g)$, the B\"ottcher composition $\varphi_{g}^{-1}\circ\varphi_f$ can be extended to a quasiconformal map $\phi$ on $\C$.
    In particular, $\phi$ forms a quasiconformal conjugation between $f$ and $g$ on the Julia sets.
\end{defi}

    It is conjectured that every polynomial $f \in \mathcal{C}_d$ with no indifferent cycles is combinatorially rigid.
    This is known as the \emph{Combinatorial Rigidity Conjecture}.
    For $d=2$, the Combinatorial Rigidity Conjecture implies the local connectivity conjecture of the Mandelbrot set (MLC), hence implies the Density of Hyperbolicity Conjecture for quadratic polynomials.
    The combinatorical rigidity is known for many cases, see \cite{hubbard1992local,avila2009combinatorial,kahn2006priori,kahn2008priori,kahn2009priori,cheraghi2010combinatorial}.

    For $d=3$, Henrikson \cite{Henriksen2003TheCR} find a counterexample to show that the Combinatorial Rigidity Conjecture fails in degree $3$.
    Recently, Kiwi-Wang \cite{CombinatorialContinuity} construct a unicritical counterexample in degree $3$.

    \subsection{Main Results}
    
    In this article, we study the combinatorial rigidity of polynomials with bounded Fatou cycles.
    Since the definition of combinatorial rigidity excludes all indifferent cases, we only need to discuss the attracting case.
    The main result of the article is the following.

\begin{thm}[main result]\label{main}
   For $d\geq 3$, every polynomial $f \in \mathcal{C}_d$ with no indifferent cycle and possessing an attracting cycle which attracts at least two critical points counted with multiplicity is not combinatorially rigid.
\end{thm}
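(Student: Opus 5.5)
Since the rational lamination only records which rational rays co-land, the plan is to deform $f$ inside a one-parameter family that keeps $\L_\Q$ constant, and then show that the B\"ottcher composition cannot extend quasiconformally for at least one member of the family.

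First, let $\mathcal{O}$ be the attracting cycle, of period $p$, with immediate basin components $U_0,\dots,U_{p-1}$, and suppose it attracts critical points $c_1,c_2$ (possibly $c_1=c_2$ with multiplicity $\ge 2$). We use a quasiconformal surgery (Ma\~n\'e--Sad--Sullivan / McMullen--Sullivan Teichm\"uller theory) supported on the grand orbit of the basin of $\mathcal{O}$. The quotient of the basin minus the grand orbits of the attracting point and of the critical points is a punctured torus. The marked critical orbits give at least two distinct marked points, or a degenerate critical point, on that torus. Hence there is a nontrivial holomorphic family $f_t$ of qc deformations of $f$, conformal off the basin, that moves the relative position of the two critical orbits (for example, the ratio of their linearizing coordinates in $U_0$). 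In the multiplicity case, we instead split the multiple critical point by a Blaschke-model surgery on $U_0$ that keeps the restriction a proper map of the same degree. In either case $f_t\in\mathcal{C}_d$ has no indifferent cycles, and $f_t$ is conjugate to $f$ on a neighborhood of the Julia set except inside the basin. The conjugacy is qc and conformal on the basin of infinity, so external rays correspond. Therefore $\L_\Q(f_t)=\L_\Q(f)$.

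Second, we choose a $g$ with the same lamination but a different basin structure. Rather than staying in the qc class, we perturb the attracting dynamics: we replace the first-return map on $U_0$ (a proper map of degree $m\ge 2$ with an attracting fixed point) by a different Blaschke model. In one model both critical values in $U_0$ lie on the same grand-orbit level, for instance with one critical point mapping exactly onto the other's forward orbit. In the other model they lie on independent orbits. We glue this model in via a Douady--Hubbard style surgery and straighten. The boundary behaviour on $\partial U_j$ is the same, since both Blaschke products are qc-conjugate on the circle (both are expanding of degree $m$). Hence the external rays and the Julia set combinatorics are unchanged, and $\L_\Q(g)=\L_\Q(f)$. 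If $\phi$ extended $\varphi_g^{-1}\circ\varphi_f$ quasiconformally, then $\phi$ would conjugate $f$ to $g$ on $J$. Being qc, it could be chosen, by pullback and the usual lifting argument, to conjugate on the basins as well. That would preserve critical orbit relations, such as a critical point landing in the grand orbit of another or on the attracting point, which is a contradiction.

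The main obstacle is the last step. The definition only asks that the B\"ottcher composition extend to some qc map of $\C$, not a conjugacy on the Fatou set. So I must show that such an extension forces a qc conjugacy on the bounded Fatou components respecting critical orbit relations. One approach is to use that $\phi$ conjugates on $J$ and to redefine it on each Fatou component by lifting through the dynamics. A conjugacy between Blaschke models that extends the boundary correspondence need not exist when critical relations differ. I would instead argue via the boundary map itself: on $\partial U_0$, the boundary conjugacy between two degree-$m$ Blaschke products extends qc to the disk conjugating the dynamics only if the multipliers and critical configurations match up to qc equivalence. I would then choose the two models so that this fails, for instance by making a critical point of one model equal to its attracting fixed point, i.e. superattracting versus attracting.
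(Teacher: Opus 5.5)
\paragraph{Gap: the construction never leaves $\qc(f)$.}
Because of this, it cannot produce a counterexample.

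In both of your steps, $g$ comes from a quasiregular model that agrees with $f$ off the grand orbit of the attracting basin. That model is straightened by a quasiconformal map $\xi$ which is conformal on the basin of infinity. After normalization, $\varphi_g\circ\xi=\varphi_f$ there, so $\xi$ is itself a quasiconformal homeomorphism of the plane extending $\varphi_g^{-1}\circ\varphi_f$. This is exactly why the surgeries in the paper's Lemma \ref{centralize}, which also change critical orbit relations inside the basin, stay in $\qc(f)$.

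The definition of combinatorial rigidity asks only for \emph{some} quasiconformal extension of the B\"ottcher composition. It imposes nothing on bounded Fatou components. So multipliers, superattracting versus attracting behaviour, and critical relations inside the basin are all invisible to it.

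The step you flag as the main obstacle cannot be repaired. A conjugacy on $J$ need not lift to the basins when critical values do not correspond. Even when it fails to lift, the non-dynamical extension already places $g$ in $\qc(f)$.

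A sanity check shows the argument proves too much. Your superattracting-versus-attracting argument never uses the hypothesis of two attracted critical points. It would show that $z^2$ is not rigid by comparing it with $z^2+\epsilon$. But $J(z^2+\epsilon)$ is a quasicircle, the B\"ottcher composition visibly extends, and Theorem \ref{hyperbolic} says $z^2$ is rigid.

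\paragraph{The missing idea: degenerate out of the qc class.}
You need a limit map whose dynamics on $J$ is genuinely different from that of $f$.

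The paper first arranges, within $\qc(f)$, a superattracting centre $0$ together with a second free critical point $c$ in the same basin. It then pushes $c$ by Branner--Hubbard stretching along an internal ray of a carefully chosen irrational angle $t_1$. Finally it takes a limit $f_{\mathbf{a}_2}$ as the B\"ottcher level of the critical value tends to $1$.

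In the limit, $c$ becomes a Julia critical point on $\partial B$, while $0$ keeps the cycle superattracting. This is where the second attracted critical point is essential, since an attracting cycle must keep attracting some critical point. The limit has more Julia critical points than $f$, so it is not in $\qc(f)$.

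The substantive work, absent from your plan, is proving $\lambda_{\mathbb{Q}}(f_{\mathbf{a}_2})=\lambda_{\mathbb{Q}}(f)$. One must show that:
\begin{enumerate}
\item the limit has no indifferent cycles;
\item the pushed critical value lands at a singleton limb, using Roesch--Yin limb theory together with the critical separability of $t_1$;
\item the Julia critical points retain infinite orbits, using puzzles and generalized polynomial-like maps.
\end{enumerate}
From these, every rational ray lands at a repelling point stably near the limit (Corollary \ref{localholo}). Co-landing relations then pass to the limit in both directions.
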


    The condition that the attracting cycle attracts at least two critical points cannot be removed.
    In fact, a polynomial whose every attracting cycle attracts a unique critical point might be combinatorially rigid, see the following Theorem \ref{hyperbolic}.

    Our result has a wide generality since it has no restriction for the behaviors of the Julia critical points.
    It provides infinitely many counterexamples for the Combinatorial Rigidity Conjecture.


    A polynomial $f \in \mathcal{C}_d$ is \emph{hyperbolic} if all of critical points are attracted by attracting cycles.
    It is well-known that every attracting cycle must attract at least one critical point.
    Thus, a hyperbolic polynomial $f \in \mathcal{C}_d$ can possesses at most $d-1$ attracting cycles in the filled-in Julia set.
    The extreme case is call the \emph{disjoint type}.
    Applying Theorem \ref{main}, we see that for a hyperbolic polynomial $f \in \mathcal{C}_d$, non-disjoint type is a sufficient condition of combinatorial non-rigidity.
    Our second result shows that this condition is also necessary.
    
\begin{thm}[hyperbolic case]\label{hyperbolic}
   For $d\geq 2$, a hyperbolic polynomial in $\mathcal{C}_d$ is combinatorially rigid if and only if it is of the disjoint type.
\end{thm}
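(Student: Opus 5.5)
Theorem \ref{hyperbolic} splits into two implications. For a hyperbolic $f$ that is not of the disjoint type, some attracting cycle attracts at least two critical points counted with multiplicity. When $d\geq 3$, Theorem \ref{main} then says directly that $f$ is not combinatorially rigid. When $d=2$ there is only one critical point, so every hyperbolic quadratic polynomial is of the disjoint type and nothing needs to be shown. The substance of the proof is therefore the converse: a hyperbolic polynomial of the disjoint type is combinatorially rigid.

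For the converse, fix $f$ hyperbolic of disjoint type and let $g\in\mathcal{C}_d$ have no indifferent cycles and satisfy $\L_\Q(g)=\L_\Q(f)$. The first step is to show that $g$ is also hyperbolic of disjoint type, with its bounded Fatou components matched to those of $f$ by the combinatorics. The rational lamination determines the pattern of bounded Fatou components: for each attracting cycle of $f$, the periodic landing points of rays on the boundary of each component of that cycle, together with their dynamics, force $g$ to have a periodic Fatou component in the corresponding position. Since $g$ has no indifferent cycles, that component is attracting. This gives $g$ at least $d-1$ distinct attracting cycles. Each such cycle attracts at least one critical point, and $g$ has only $d-1$ critical points counted with multiplicity. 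Hence every critical point of $g$ is attracted, each cycle attracts exactly one critical point, and $g$ is hyperbolic of disjoint type.

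The second step is to build the quasiconformal extension of $\varphi_g^{-1}\circ\varphi_f$. Since $f$ and $g$ are hyperbolic with connected Julia sets and the same rational lamination, they lie in hyperbolic components whose combinatorics agree, and their Julia sets are locally connected. The lamination then determines the full (closed) lamination, so there is a topological conjugacy $h$ on the Julia sets compatible with the B\"ottcher coordinates. I would upgrade this by a standard pullback argument. Start with a quasiconformal map $\phi_0$ that equals $\varphi_g^{-1}\circ\varphi_f$ outside an equipotential. On a neighborhood of each attracting cycle, use linearizing or B\"ottcher coordinates; because each basin contains exactly one critical point, the local models are quasiconformally equivalent. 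Choose $\phi_0$ to respect the critical orbits up to isotopy rel the postcritical set. Then lift $\phi_{n+1}=g^{-1}\circ\phi_n\circ f$. Hyperbolicity gives uniformly bounded dilatation, and the maps converge to a quasiconformal $\phi$ conjugating $f$ to $g$ and agreeing with the B\"ottcher composition on the basin of infinity.

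The main obstacle is the isotopy condition in the pullback. In a Fatou component containing a single critical point, the position of that critical point relative to the periodic points on the boundary is determined up to homotopy. This is exactly why the disjoint-type hypothesis is needed: with two critical points in one basin, their relative position is a free modulus and rigidity fails. The case analysis I expect to be most delicate is when a critical point lies in a preperiodic rather than a periodic Fatou component. There I would pull back the marking through the component tree, which the lamination determines.
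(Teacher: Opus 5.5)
Your treatment of the non-disjoint direction is fine: Theorem \ref{main} covers $d\ge 3$, and the case $d=2$ is vacuous. The gap is in your first step, which is in fact the whole substance of the paper's proof. You assert that $\L_\Q(g)=\L_\Q(f)$ ``forces $g$ to have a periodic Fatou component in the corresponding position,'' but you give no mechanism. Nothing in the lamination by itself produces a non-repelling periodic point of $g$; one has to be extracted by a fixed-point count.

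The paper does this after replacing $f$ by the center of its hyperbolic component (Fact \ref{pcf}).
\begin{enumerate}
\item The two rays cutting each nontrivial limb of $B_k$ off from $B_k$ have rational angles (Claim \ref{rational}, an expansion argument on the complementary arcs). This is what lets these cuts be transported to $g$ through $\L_\Q$.
\item Cutting along the pairs rooted at the finite forward-invariant set $O$ of roots of critical limbs, and at $f^{-p_k}(O)\cap\partial B_k$, gives a proper map $g^{p_k}:U_k^g\to V_k^g$ of degree $2=\deg(f^{p_k}|_{B_k})$. After thickening it is quadratic-like with filled Julia set $K_{g,k}$ (Claim \ref{filledin=}), so $g^{p_k}$ has two distinct fixed points in $K_{g,k}$.
\item One of them is $\beta_k(g)$, the common landing point of the rays with angles in $E^0_k$. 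If the other were repelling, a rational ray $R_g(\theta^*)$ with $\theta^*\in E_k$ would land on it. But $R_f(\theta^*)$ lands at a periodic point of $f^{p_k}$ on $\partial B_k$ of some period $q$. If $q\ge 2$ this contradicts equality of laminations, and if $q=1$ the point must be $\beta_k(g)$.
\end{enumerate}
So the second fixed point is attracting. Only at this stage does your count ($d-1$ cycles, $d-1$ critical points) show that $g$ is hyperbolic of disjoint type.

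Your second step takes a different route from the paper. The paper passes to the centers $f_0,g_0$ (Fact \ref{pcf}) and invokes rigidity of hyperbolic postcritically finite maps (Fact \ref{rigidity}) to get $f_0=g_0$, hence $g\in\qc(f)$. A pullback argument could also work, but it needs a B\"ottcher-compatible equivalence between $f$ and $g$ as input. Your sentence ``the lamination then determines the full (closed) lamination'' is exactly that rigidity statement, asserted without proof. Once it is granted, the isotopy bookkeeping adds nothing, and it is awkward anyway relative to an infinite postcritical set.

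Your explanation of why disjointness matters is also misplaced. The relative position of critical points inside a basin is not what breaks rigidity: moving them within the basin is a qc deformation that stays in $\qc(f)$, as in Lemma \ref{centralize} and Lemma \ref{stretch}. Rigidity fails in the non-disjoint case because a non-hyperbolic $g$, with a critical point pushed onto $\partial B$ as in Theorem \ref{main}, has the same lamination. Disjointness enters only through the critical-point count that forces $g$ to be hyperbolic.
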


    
    We illustrate our idea with the following example which is already observed several times in the literature \cite{faught1993local,kiwi2001rational,roesch2007hyperbolic}.

\begin{exam}\label{eg1}
    The cubic polynomial $f(z) = z^3$ is not combinatorially rigid.
\end{exam}

    In fact, we can split the critical point $0$ of multiplicity $2$ into a fixed critical point $0$ and a simple critical point $\omega$.
    Then we push $\omega$ to a point with infinite forward orbit on the boundary of the fixed attracting component.
    Thus, we obtain a limit map $g$ on the boundary of the hyperbolic component containing $f$ such that the other critical point $\omega(g)$ lies on the boundary of the attracting Fatou component of $0$ with infinite forward orbit.
    Moreover, both $f$ and $g$ have trivial rational lamination but they are not conjugated on the Julia sets.
    See Figure \ref{fig1}.

 \begin{figure}[h]\label{fig1}
  \begin{center}
   \vspace{2mm}
   \begin{minipage}{.48\linewidth}
    \includegraphics[width=\linewidth]{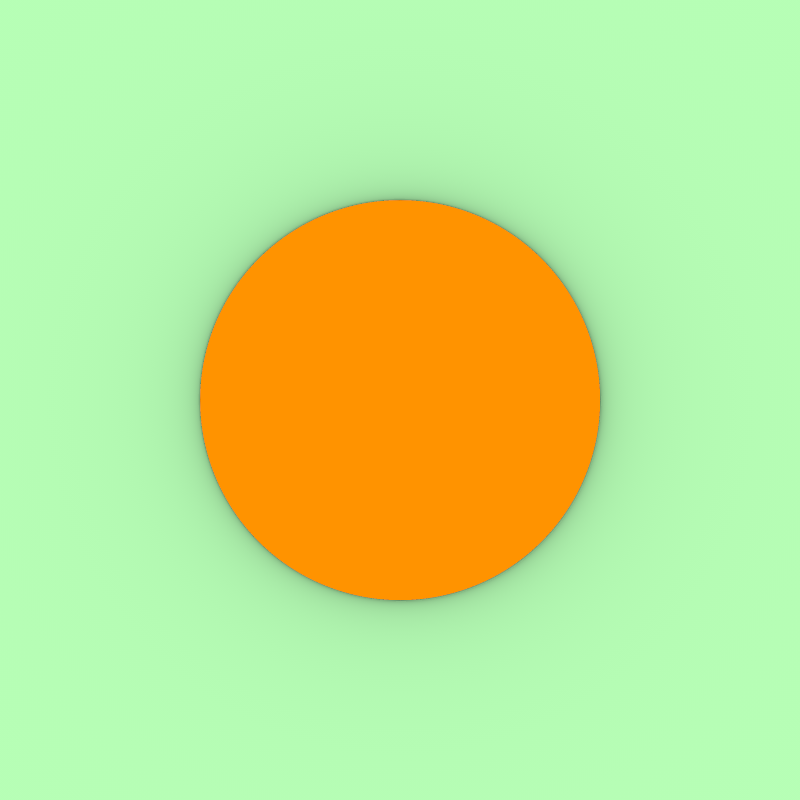}
    \caption{$K(f_0)$}
  \end{minipage}
  \hspace{2mm}
  \begin{minipage}{.48\linewidth}
    \includegraphics[width=\linewidth]{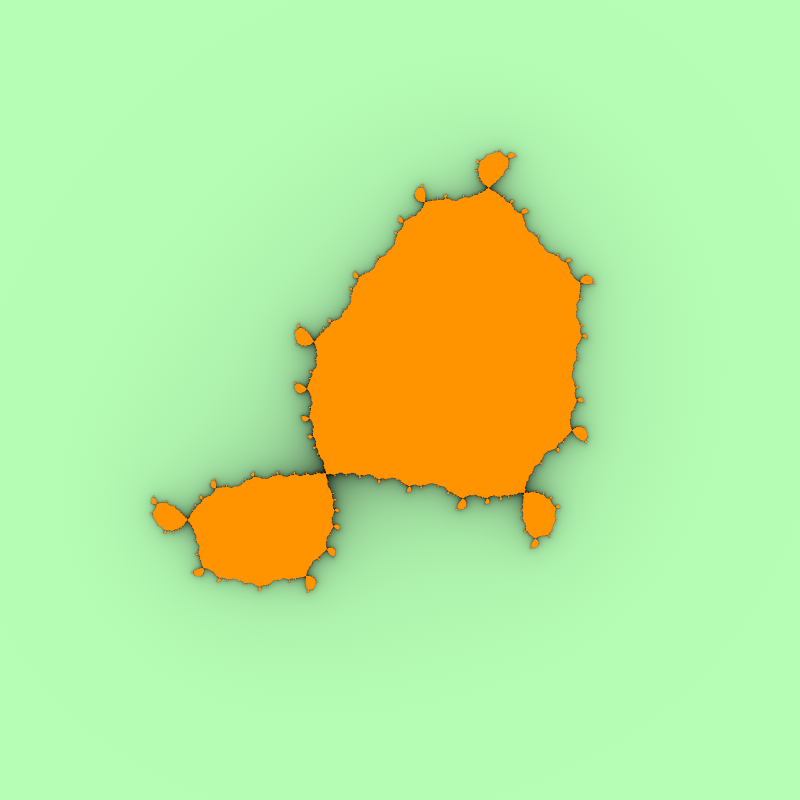}
    \caption{$K(f)$}
  \end{minipage}
 \end{center}
\end{figure}


\subsection{Idea of the proof and structure of the paper}
To prove Theorem \ref{main}, we need to find a map $g$ with $\L_\Q(g)=\L_\Q(f)$ which is not conjugated to $f$ on the Julia set.
The strategy of the proof of Theorem \ref{main} mainly follows by the inspiration of Example \ref{eg1}.
We push one of the critical point $c$ in the attracting basin toward a suitable boundary point $x$.
This is the main topic in Section \ref{sec3}.
This deformation forms a curve $\mathcal{R}=\{ f_t \}$ in the parameter space.
Then every limit map of the parameter curve $\mathcal{R}$ should serve as the desired map $g$.
It is obvious to see that $f$ and $g$ cannot be conjugated on the Julia sets.
Hence to finish the proof of Theorem \ref{main}, it suffices to show that $\L_\Q(g)=\L_\Q(f)$.
This requires us to characterize the dynamic of $g$ which is the main purpose of Section \ref{sec4}.

The main observation is that the dynamics of $g$ should inherit those of $f$ except the critical orbit of $c$.
The key difficulty is that in the general case, $f_t$'s might have Julia critical points with various kinds of orbits which is hard to be preserved to the limit $g$.
To solve this, we use the sectors, quadrilaterals, and puzzles and their perturbations to precisely control the positions of critical points of $g$.
There are two ingredients which are fundamental in our proof:
\begin{itemize}
    \item Roesch-Yin's theory of limb structure with respect to the attracting components (summarized in Theorem \ref{roeschyin});
    \item Douady-Hubbard's theory of polynomial-like maps (discussed and slightly generalized in Subsection \ref{subsec24}).
\end{itemize}

In Section \ref{sec5}, we present the proof of Theorem \ref{hyperbolic}.



\section{Preliminary}

\subsection{Notations}
Let $\mathbb{C}$ denote the complex plane.
The \emph{Riemann's sphere} $\CC:=\mathbb{C} \cup \{\infty\}$ is the one point compactification of $\mathbb{C}$. 
Let $\mathbb{D}:=\{z \in \C: |z|<1 \}$.

Let $\pi: \mathbb{R} \to \RZ:=\mathbb{R}/\mathbb{Z}$ be the natural projection map.
$\theta_{1},\theta_{2}, \cdots \theta_{n} \in \mathbb{R}/\mathbb{Z}$ are said to be in \emph{positive cyclic order} if there exist $\tilde{\theta}_{1}, \tilde{\theta}_{2}, \cdots, \tilde{\theta}_{n} \in \mathbb{R}$ such that $\pi(\tilde{\theta}_{k})=\theta_{k}$ for $1 \leq k \leq n$ and
\begin{equation*}
    \tilde{\theta}_{1}<\tilde{\theta}_{2}< \cdots <\tilde{\theta}_{n}<\tilde{\theta}_{1}+1.
\end{equation*}
For any $\theta_{1}, \theta_{2} \in \mathbb{R}/\mathbb{Z}$, the \emph{positive open interval} is defined to be
$$
    (\theta_{1},\theta_{2})_{+}:=\{\theta \in \RZ: \theta_{1},\theta,\theta_{2}~\mbox{are in positive cyclic order}\}.
$$
The \emph{length} of the interval $(\theta_{1},\theta_{2})_{+}$ is defined to be
$
    \LL (\theta_{1},\theta_{2})_{+}:=\tilde{\theta}_{2}-\tilde{\theta}_{1},
$
where $\tilde{\theta}_{1}, \tilde{\theta}_{2}$ are representatives of $\theta_{1},\theta_{2}$ such that $\tilde{\theta}_{1}<\tilde{\theta}_{2}<\tilde{\theta}_{1}+1$.
Similarly, one can define other kinds to intervals on $\RZ$. 
The length of these intervals are all defined to be $ \LL (\theta_{1},\theta_{2})_{+}$.
For $d \geq 2$, let $\tau_d: \RZ \to \RZ$ be the induced map of the map $\theta \mapsto d \theta : \mathbb{R} \to \mathbb{R}$ on the quotient space $\RZ$.

Let $\mathcal{A}$ be a complex manifold.
Given $\a_{0} \in \AA$ and a set $X \subset \CC$, a map $h: \AA \times X \to \CC$ is called a \emph{holomorphic motion} ({based at $\a_{0}$}) if the following holds.
\begin{enumerate}
    \item Fix $x \in X$, the map $h(\cdot,x): \a \mapsto h(\a,x)$ is a holomorphic function.
    \item Fix $\a \in \AA$, the map $h_\a: x \mapsto h(\a,x)$ is an injection. 
    \item The map $h_{\a_0}: X \to X$ is the identity map. 
\end{enumerate}
According to the well-known $\lambda$-Lemma \cite{mane1983dynamics}, any holomorphic motion $h: \AA \times X \to \CC$ can be extended to a holomorphic motion $h: \AA \times \overline{X} \to \CC$.

\subsection{External rays}
Let $\mathcal{F}=\{ f_\a\}_{\a \in \AA}$ be a family of polynomials of degree $d \geq 2$.
For $\a \in \mathcal{F}$,
$\infty$ is a fixed critical point of $f_\a$ with multiplicity $d$.
Let 
$$
    K_\a:=\{z \in \mathbb{C} : \{ f_\a^n(z) \} \mbox{ is bounded} \}.
$$
denote the filled-in Julia set, and $J_\a:=\partial K_\a$ be the Julia set.
Then $\Omega_\a:= \C \setminus K_\a$ is the \emph{attracting basin} of $\infty$.
There exists a B\"ottcher map $\varphi_\a$ which is defined near $\infty$ such that 
\begin{equation}\label{boteq3}
    \varphi_\a(f_\a(z))=(\varphi_\a(z))^d,\qquad \lim_{z \to \infty}\frac{\varphi_\a(z)}{z}=1. 
\end{equation}
In general, $\varphi_\a$ cannot be continuously extended to the whole $\Omega_\a$ while its modulus can be extended to $\Omega_\a$.
It is the \emph{exponential Green function} $|\varphi_\a| : \Omega_\a \to [0,+\infty)$ of $f_\a$
Fix $\a\in \AA$, the map $\varphi_\a$ is called the \emph{B\"ottcher map} of $f_\a$ with respect to $\infty$.  
Denote $\psi_{\a}$ to be the inverse of $\varphi_{\a}$.
By \eqref{boteq3}, we have 
\begin{equation}\label{boteq4}
   f_\a \circ \psi_{\a} (w)=\psi_{\a} (w^d),
\end{equation}
According to \cite{douady1984etude}, there exists a unique $s_{\a}(\theta) \in [1, {s_{\a}}]$ such that $\psi_\a$ admits a canonical (which means that \eqref{boteq4} still holds) smooth extension to $(s_{\a}(\theta),+\infty)e^{2\pi i \theta}$ without hitting any iterative preimages of critical points.
If $s_{\a}(\theta)=1$, we define the \emph{external ray}  with angle $\theta$ is defined to be
$$
    R_{\a}(\theta):=\psi_{\a}((1,+\infty)e^{2\pi i \theta}).
$$
If $\lim_{s \to 1} \psi_{\a}(se^{2\pi i \theta})=z$ exists, we say that the external ray $ R_{\a}(\theta)$ \emph{land}s at $\psi_{\a}(e^{2\pi i \theta}):=z$.
If $\theta \in \mathbb{Q}/\mathbb{Z}$ and $s_{\a}(\theta)=1$, then $R_{\a}(\theta)$ lands at an eventually repelling or parabolic periodic point.
If $K_\a$ is connected, then for any eventually repelling or parabolic periodic point $z$, there exist at least one, and at most finitely many $\theta \in \mathbb{Q}/\mathbb{Z}$ such that the external ray $R_\a(\theta)$ lands at $z$.

The following two well-known facts concerning the perturbation of external rays are well-known, see \cite{douady1984etude}.

\begin{lem}[holomorphic motion of external rays]\label{rayhm}
    Let $\mathcal{F}=\{ f_\a\}_{\a \in \mathcal{U}}$ be a holomorphic family of polynomials of degree $d \geq 2$.
    Suppose that for $\a \in \mathcal{U}$, we have $s_\a(\theta)=1$, then fix any $\a_0 \in \mathcal{U}$, the map $\xi : \mathcal{U} \times R_{\a_0}(\theta) \to \C$ given by $(\a, z)\mapsto \psi_\a\circ\varphi_{\a_0}(z)$ is a holomorphic motion.
     
\end{lem}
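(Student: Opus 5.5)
The plan is to write the map $\xi(\a,z)=\psi_\a\circ\varphi_{\a_0}(z)$ explicitly in terms of the potential and check the three axioms of a holomorphic motion directly. Every $z\in R_{\a_0}(\theta)$ has the form $z=\psi_{\a_0}(re^{2\pi i\theta})$ for a unique $r>1$, so $\xi(\a,z)=\psi_\a(re^{2\pi i\theta})$. Since $s_\a(\theta)=1$ for every $\a\in\mathcal U$, this expression is defined for all $\a$. Condition (3) is immediate, because $\psi_{\a_0}\circ\varphi_{\a_0}$ is the identity on $R_{\a_0}(\theta)$. Condition (2), injectivity for fixed $\a$, holds because $\psi_\a$ is injective on the ray $(1,\infty)e^{2\pi i\theta}$: it is the inverse of the canonical extension of $\varphi_\a$, and $|\varphi_\a|$ equals $r$ there and is strictly monotone along the ray.

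The substantive point is condition (1): for a fixed $w=re^{2\pi i\theta}$, the map $\a\mapsto\psi_\a(w)$ is holomorphic. I will argue this in two steps.

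\emph{Step 1.} For $|w|$ large, say $|w|>R$ uniformly on compact subsets of $\mathcal U$, the B\"ottcher map is the limit
$$\varphi_\a(z)=\lim_{n\to\infty}\big(f_\a^n(z)\big)^{1/d^n},$$
with the branch normalized by $\varphi_\a(z)/z\to1$. This convergence is locally uniform in $(\a,z)$ near $\infty$, so $\varphi_\a(z)$ is jointly holomorphic there. Applying the holomorphic implicit function theorem, or the inverse function theorem with parameters, to $\varphi_\a(z)=w$ then gives that $\psi_\a(w)$ is jointly holomorphic for $|w|>R$.

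\emph{Step 2.} To reach an arbitrary $r>1$, pull back using \eqref{boteq4}. Choose $n$ with $r^{d^n}>R$. Then $\psi_\a(w)$ is the point $z$ on the canonical extension satisfying $f^n_\a(z)=\psi_\a(w^{d^n})$. By hypothesis the segment $(1,\infty)e^{2\pi i\theta}$ lifts without hitting iterated preimages of critical points, so $(f^n_\a)'(z)\neq0$ at this point. The implicit function theorem applied to $F(\a,z)=f^n_\a(z)-\psi_\a(w^{d^n})$ then yields a local holomorphic solution $z(\a)$. The right-hand side of $F$ is holomorphic in $\a$ by Step 1.

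The main obstacle is checking that this local solution coincides with the canonical branch $\psi_\a(w)$ and does not jump to another preimage. I handle this by continuation in $r$. Let $E$ be the set of $(\a,s)\in\mathcal U\times(1,\infty)$ for which $\a'\mapsto\psi_{\a'}(se^{2\pi i\theta})$ is holomorphic near $\a$. The canonical extension is continuous in $(\a,s)$, since it is obtained by analytically continuing along the ray, which avoids critical values of the relevant iterate. So near a point of $E$, the implicit-function solution and the canonical extension agree by uniqueness of continuous lifts through a local biholomorphism. This shows $E$ is open and closed in the connected set $\{\a\}\times(1,\infty)$. Because $E$ contains all large $s$ by Step 1, it is everything, which finishes the proof.
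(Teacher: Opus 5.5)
The paper gives no proof of this lemma; it quotes it as a well-known fact from \cite{douady1984etude}. Your argument is correct and is the standard proof behind that citation, so it makes the lemma self-contained. It uses joint holomorphy of the B\"ottcher coordinate near $\infty$, then pulls back along the ray by $f_\a^n$, which is a local biholomorphism at $\psi_\a(w)$ because the extended ray contains no precritical points.

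Two points of presentation.

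First, the phrase ``avoids critical values of the relevant iterate'' is not accurate in general. The image path $s\mapsto\psi_\a(s^{d^n}e^{2\pi i d^n\theta})$ can pass through a critical value of $f_\a^n$ while the lifted point on the ray is a non-critical preimage of it. What the argument actually needs, and what you had already established, is only that $(f_\a^n)'\neq 0$ along the lifted ray.

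Second, your last step mixes two arguments, and either one suffices on its own.
\begin{itemize}
\item If you grant joint continuity of $(\a,s)\mapsto\psi_\a(se^{2\pi i\theta})$, then the identity $f_\a^n(\psi_\a(w))=\psi_\a(w^{d^n})$ and local uniqueness in the implicit function theorem give $\psi_\a(w)=z(\a)$ near every $\a_*$ directly. No open--closed argument is needed. However, that joint continuity is itself a parametrized path-lifting statement, which you assert rather than prove.
\item The open--closed argument does not need joint continuity. At $(\a,s_0)\in E$, membership in $E$ gives continuity of $\a'\mapsto\psi_{\a'}(s_0e^{2\pi i\theta})$, which identifies it with the implicit-function solution at level $s_0$. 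Uniqueness of lifts in $s$, for each fixed $\a'$, then propagates this over the interval that the implicit function theorem supplies uniformly. Closedness follows the same way, applying the implicit function theorem at the limit level.
\end{itemize}
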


\begin{lem}[continuity of external rays landing on repelling orbit]\label{stablerepelling}
    Suppose that $R_{\a_0}(\theta)$ lands at a repelling periodic or preperiodic point such that the forward orbit of $\psi_{\a_0}(e^{2\pi i \theta})$ does not contain any critical points of $f_{\a_{0}}$.
    There exists a neighborhood $\mathcal{U}$ of $\a_{0}$ such that for any $\a \in \mathcal{U}$, $s_{\a}(\theta)=1$ and $z_\theta(\a)$ is also eventually repelling.
\end{lem}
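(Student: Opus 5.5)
The statement is Lemma \ref{stablerepelling}: under a perturbation of $\a_0$, the ray $R_\a(\theta)$ stays non-broken and its landing point stays eventually repelling. I will prove it by the standard holomorphic motion argument of Douady–Hubbard.

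\textbf{Reductions.} Write $z_0=\psi_{\a_0}(e^{2\pi i\theta})$. Since $\theta\in\Q/\Z$ in the situation of the lemma, there are integers $l\ge 0$ and $p\ge 1$ with $\tau_d^{l}(\theta)$ periodic of period $p$. Then $f_{\a_0}^{l}(z_0)$ is a repelling periodic point $w_0$, and $f^{l}_{\a_0}$ is locally univalent at $z_0$, because no point of the forward orbit of $z_0$ is critical.

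\textbf{Periodic case.} Assume first $l=0$, so $z_0=w_0$ is repelling of period $p$ with multiplier $|\lambda|>1$.
\begin{enumerate}
\item By the implicit function theorem, $w_0$ continues holomorphically to a repelling periodic point $w(\a)$ on a neighbourhood of $\a_0$.
\item Choose a small disk $V$ around $w(\a)$ on which the branch $g_\a$ of $f_\a^{-p}$ fixing $w(\a)$ is defined, is a uniform contraction, and maps $\overline V$ into $V$; this can be done uniformly in $\a$ near $\a_0$.
\item Choose $s_0>1$ so that the point $\psi_{\a_0}(s_0^{1/d^p}e^{2\pi i\theta})$ lies in $V$. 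Consider the ray segment $\gamma_{\a_0}=\psi_{\a_0}([s_0^{1/d^p},s_0]e^{2\pi i\theta})$. It is compact, lies in the basin of $\infty$, and avoids the critical orbits. Hence $\psi_\a$ extends to a neighbourhood of $[s_0^{1/d^p},s_0]e^{2\pi i\theta}$, and $\gamma_\a$ moves continuously in $\a$, staying inside $V$ for $\a$ close to $\a_0$. This uses the continuity of the Böttcher coordinate away from critical points, as in Lemma \ref{rayhm}.
\item Pull back: $R_\a(\theta)\cap\{|\varphi_\a|<s_0\}$ is the union of $g_\a^n(\gamma_\a)$ over $n\ge 0$. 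Each such pullback is well defined inside $V$, so no critical point or critical preimage is ever hit. This gives $s_\a(\theta)=1$.
\item The contraction of $g_\a$ makes the segments $g_\a^n(\gamma_\a)$ shrink geometrically to $w(\a)$, so the ray lands at $w(\a)$, which is repelling.
\end{enumerate}

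\textbf{Preperiodic case.} For $l>0$, apply the periodic case to $\tau_d^{l}(\theta)$. Then pull back by the univalent branch of $f_\a^{-l}$ that sends $w(\a)$ to the continuation $z(\a)$ of $z_0$. This branch exists and is univalent near the orbit for $\a$ close to $\a_0$, because no critical point lies on the orbit of $z_0$ and the critical points move continuously. The outer part of the ray is again handled by compactness, and $z(\a)$ is eventually repelling.

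\textbf{Main obstacle.} The delicate point is uniformity: the neighbourhood $V$, the contraction constant, and the compact ray piece $\gamma_\a$ must all be chosen uniformly in $\a$. One must also make sure that no critical point or escaping critical value collides with the finite part of the ray. I would handle this with a compactness and continuity argument for the Green function, which is continuous jointly in $(\a,z)$.
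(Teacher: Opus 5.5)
Your argument is correct. The paper gives no proof of this lemma: it only cites Douady--Hubbard \cite{douady1984etude}, and what you wrote is the standard argument behind that citation (continuity of the B\"ottcher coordinate on the compact outer piece of the ray, iteration of the contracting inverse branch at the repelling point for the tail, and a univalent pull-back by $f_\a^{l}$ in the preperiodic case).

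The one step to tighten is step 3. Take $s_0$ close enough to $1$ that the whole tail $\psi_{\a_0}((1,s_0]e^{2\pi i\theta})$ lies in $V$, not just one point. Also take $V$ small enough that $f_\a^p$ is univalent on it. Then $g_\a$ really sends the top endpoint of $\gamma_\a$ to its bottom endpoint, and the pieces $g_\a^n(\gamma_\a)$ chain together into the continuation of the ray.
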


We will also need Roesch-Yin's characterization of limbs structure with respect to the bounded Fatou components.

\begin{thm}[Roesch-Yin \cite{roesch2008boundary}]\label{roeschyin}
    Let $f_\a$ be a polynomial of degree $d \geq 2$ with connected Julia set, and $B_\a$ be a periodic attracting component of period $p$.
    Then $\partial B_\a$ is a Jordan curve.
    For each $x \in \partial B_\a$, there exists a compact connected set $L_x \subset K_\a \setminus B_\a$ such that $L_x \cap \overline{B_\a}=\{ x\}$ and
    $$K_\a=B_\a \cup \left(\bigsqcup_{x \in \partial B_\a} L_x \right).$$
    Here $L_x$ is called the limb of $B_\a$ attached at $x$, and $x$ is called the root of $L_x$.
    Moreover, the following holds.
   \begin{enumerate}
       \item If $L_x=\{ x\}$, then there exists a unique external ray which lands at $x$.
       \item If $L_x\neq \{ x \}$, then there exist two distinct external rays $R_\a(\theta^-)$, $R_\a(\theta^+)$ landing at $x$ such that they together with their common landing point separate $L_x$ away from $B_\a$.
       There exists $n$ such that $L_{x_n}$ contains a critical point of $f_\a^p$ where $x_n:=f_\a^n(x)$.
   \end{enumerate}
\end{thm}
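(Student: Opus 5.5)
The statement is Theorem~\ref{roeschyin}, quoted from \cite{roesch2008boundary}. I would reprove it with a puzzle construction adapted to the attracting component $B_\a$. Replacing $f_\a$ by $f_\a^p$, assume $B_\a$ is invariant. Then $f_\a^p\colon B_\a\to B_\a$ is a proper map of degree $k\geq 2$, and by uniformization it is conjugate to a Blaschke product $b$ on $\D$. The map $b$ is expanding on $\partial\D$ and has a fixed point there. I would use internal rays, meaning preimages of the radial segments of this Blaschke model. Inside $B_\a$ I take the internal rays landing at a repelling periodic cycle $\{y_i\}\subset\partial B_\a$, together with an equipotential of the basin and of $B_\a$ itself. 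The $y_i$ are chosen so that each is also the landing point of finitely many rational external rays, which is possible by Lemma~\ref{stablerepelling}-type landing results for $K_\a$ connected. The resulting graph is forward invariant and gives a Yoccoz-type puzzle. Every point $x\in\partial B_\a$ then has a nested sequence of puzzle pieces $P_n(x)$, up to the usual modification at preimages of the $y_i$.

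The key step is to show that $\bigcap_n \overline{P_n(x)}\cap\overline{B_\a}=\{x\}$. Note that this concerns only the trace on $\overline{B_\a}$, not the full piece. The trace $P_n(x)\cap B_\a$ is a sector bounded by internal rays, and its angular width in the Blaschke model shrinks exponentially because $b$ is expanding on $\partial\D$. So the trace on $\partial B_\a$ shrinks to the single point $x$, and no a priori bounds near possibly recurrent Julia critical points are needed.

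This gives local connectivity of $\partial B_\a$ via the induced extension of the Riemann map of $B_\a$. Injectivity follows because $K_\a$ is full: two internal angles landing at the same point would enclose an open region bounded by a Jordan curve in $K_\a$. That region would have to lie in the interior of $K_\a$, contradicting the fact that $B_\a$ is a component whose boundary lies in $J_\a$. Hence $\partial B_\a$ is a Jordan curve.

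Next I would define $L_x:=\bigcap_n\overline{P_n(x)}\setminus B_\a$, adjusted to $\{x\}\cup$ (the part separated from $B_\a$ by $x$). The shrinking of the traces gives $L_x\cap\overline{B_\a}=\{x\}$, disjointness for distinct $x$, and the decomposition of $K_\a$. Compactness and connectedness come from the intersection of nested continua. For (1) and (2), the external rays bounding $P_n(x)$ have angles converging to limits $\theta^-\leq\theta^+$. These two rays land at $x$: their limit rays meet $\overline{B_\a}$ only in the traces, which shrink to $x$. If $\theta^-=\theta^+$, the limb is trivial and exactly one ray lands. If $\theta^-\neq\theta^+$, the wake between the two rays contains $L_x\setminus\{x\}$, which separates it from $B_\a$.

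For the last assertion, suppose no limb $L_{x_n}$ contains a critical point of $f_\a^p$. Then $f_\a$ maps each wake of $L_{x_n}$ homeomorphically onto the wake of $L_{x_{n+1}}$, so the angular width $\LL(\theta_n^-,\theta_n^+)_+$ is multiplied by $d$ at each step. Since widths are bounded by $1$, this is impossible for a nontrivial limb. The main obstacle I expect is the existence of a suitable periodic landing cycle and the modification of puzzle pieces near its preimages, so that $P_n(x)$ is well defined for every $x\in\partial B_\a$, including points whose orbit hits the graph. I would handle it by using two different periodic cycles and taking, at each $x$, whichever puzzle avoids it.
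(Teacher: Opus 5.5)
The paper does not prove this statement; it quotes it from \cite{roesch2008boundary}. Your puzzle framework is the natural setting: internal and external rays landing at a common repelling cycle on $\partial B_\a$, together with equipotentials. But the sketch has a genuine gap, and it sits exactly at the step you call the key step.

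Knowing that the angular width of the sector $P_n(x)\cap B_\a$ in the Blaschke model tends to $0$ only tells you that $\bigcap_n\overline{P_n(x)}\cap\partial B_\a$ is the impression of a single prime end of $B_\a$. By Carath\'eodory's theory, the statement that all such impressions are points is \emph{equivalent} to local connectivity of $\partial B_\a$, which is what you are trying to prove. Your justification uses nothing but the expansion of $b$ on $\partial\D$, so it applies verbatim to the basin of $\infty$. There, sectors between external rays of angles $\theta_n^\pm\to\theta$ also have exponentially shrinking width. If the inference were valid, every connected polynomial Julia set would be locally connected, which fails, e.g., for quadratic polynomials with a Cremer point.

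The fact that $\partial P_n(x)$ meets $\partial B_\a$ only at two pinching points does not rescue the step by itself either. Take a comb-shaped domain whose slits accumulate on a boundary segment $I$. It admits nested Jordan regions of exactly your shape: two internal arcs and two external arcs meeting the boundary at only two points, with internal angular width tending to $0$. Yet every one of their traces contains $I$.

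What is missing is a genuinely dynamical control of the Euclidean size of the traces $\overline{P_n(x)}\cap\partial B_\a$. This is where the Julia critical points enter, contrary to your remark that no a priori bounds are needed. The pullbacks $f_\a^{pj}\colon P_{n+j}(x)\to P_n(f_\a^{pj}(x))$ can have unbounded degree, and the post-critical set can accumulate on $\partial B_\a$. So neither Koebe distortion nor contraction of inverse branches comes for free. One has to analyze the nest $\{P_n(x)\}$ to conclude $\bigcap_n\overline{P_n(x)}\cap\overline{B_\a}=\{x\}$. Typically this means distinguishing nests whose orbits eventually avoid critical pieces from critical nests, including periodic nests where some $f_\a^{pk}\colon P_{n+k}(x)\to P_n(x)$ is polynomial-like after thickening. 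This, not the choice of the periodic cycle you flag as the main obstacle, is the substance of \cite{roesch2008boundary}. Everything downstream in your sketch rests on it: local connectivity, the Jordan curve, and the decomposition into limbs.

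There is a smaller gap of the same kind. The fact that $R_\a(\theta^\pm)$ can meet $\overline{B_\a}$ at most at $x$ does not show that they land at $x$. Their accumulation sets are only known to lie in the end $\bigcap_n\overline{P_n(x)}$, which may be a large continuum, so landing at $x$ needs a separate argument.
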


The limb $L_x$ can be constructed as follows.
Suppose that $D:=\deg(f_\a^p|_{B_\a})$, 
then there exists a homeomorphic parameterization $\gamma: \RZ \to  \partial B_\a$ such that 
\begin{equation}\label{gammaeq}
    f_\a^p \circ \gamma(t) = \gamma(t^D), \qquad \forall t \in \RZ.
\end{equation}
Let $t_x:=\gamma^{-1}(x) \in \RZ$, then for any $(t^-,t^+)_+ \ni t_x$ with $t^\pm \in \QZ$, $\gamma(t^\pm)$ are parabolic or repelling preperiodic points on $\partial B_\a$.
There exist $\theta^\pm$ such that the external ray $R_\a(\theta^\pm)$ lands at $\gamma(t^\pm)$.
For $s'>1$ define $L_\a((t^-,t^+)_+,s')$ to be the closure of bounded complementary component of $\C \setminus (\psi_a(\{ z: |z|=s' \}\cup \gamma([t^-,t^+]_+)\cup R_\a(\theta^-)\cup R_\a(\theta^+))$.
Then we have
\begin{equation}\label{limbdef}
    L_x = \bigcap_{ (t^-,t^+)_+ \ni t_x ,s'>1 } L_\a((t^-,t^+)_+,s').
\end{equation}

\subsection{Polynomial-like maps}\label{subsec24}

Recall the definition of polynomial-like maps introduced by Douady and Hubbard \cite{douady1985dynamics}.
Let $U \Subset V$ be two simply-connected domains, and $f : U \to V$ be a holomorphic proper map of degree $d \geq 2$.
Then $\f:=(f,U,V)$ is called a \emph{polynomial-like map of degree $d$}.
Its filled-in Julia set is defined to be 
$$K(\f):=\{ z \in U : f^n(z) \in U ~\mbox{for all $n$}\}.$$
The Straightening Theorem asserts that $\f$ is \emph{hybrid-equivalent} to a polynomial $g$ of degree $d$, i.e. there exsits a quasiconformal conjugacy $\eta$ defined on $V$ such that $\eta \circ f = g \circ \eta$ and $\eta$ with $\frac{\partial \eta}{\partial \overline{z}}=0$ on $K$. 
Moreover, if $K$ is connected, then $g$ is unique up to an affine conjugation.
The map $\eta$ is called the \emph{straightening map} of $\f$, and $g$ is called the \emph{renormalized map} of $\f$.

We remark that the definition of polynomial-like maps and the Straightening Theorem trivially holds for degree $d=1$.
In fact, if $f: U \to V \Supset U$ is a proper map of degree $1$.
Then the filled-in Julia set $K(\f)$ is a repelling fixed point, and $g$ is a linear map $z \mapsto \lambda z$ with some $|\lambda|>1$.
The straightening map coincides with the Koenig's linearization coordinate.
Thus, we can talk about polynomial-like maps of any degree $d \geq 1$.

The definition of polynomial-like maps and the Straightening Theorem can be slightly generalized as follows.
Let $U_1\Subset V_1,U_2\Subset V_2$ be $4$ simply-connected domains, and $\f_1:=(f_1,U_1,V_1)$ is a polynomial-like map of degree $d_1$, $f_2: V_2 \to V_1$ and $f_2|_{U_2}: U_2 \to U_1$ are proper map of degree $d_2$.
Denote $\f_2:=(f_2,U_2,V_2)$.
We call $(\f_1,\f_2)$ a \emph{generalized polynomial-like map} of degree $(d_1,d_2)$.
We can also define it corresponding filled-in Julia set $K=(K(\f_1),K(\f_2))$ where $K(\f_2):=f_2^{-1}(K(\f_1))$.
Then we can generalize Douady-Hubbard Straightening Theorem as follows.

\begin{prop}[straightening theorem]\label{straightening}
    Let $(\f_1,\f_2)$ be a generalized polynomial-like map of degree $(d_1,d_2)$, then there exists two quasiconformal mappings $\eta_1$ define on $U_1$, $\eta_2$ defined on $U_2$, and two polynomials $g_1$ of degree $d_1$ and $g_2$ of degree $d_2$ such that
    $\eta_1 \circ f_i=g_i\circ\eta_i$ for $i=1,2$.
    We can view it as the following diagram.
\begin{equation*}
    \xymatrix{\ar @{} [dr]
  U_2 \ar[d]^{\eta_2} \ar[r]^{f_2}  &  U_1 \ar[d]^{\eta_1} \ar[r]^{f_1} & V_1 \ar[d]^{\eta_1} \\
  \C  \ar[r]^{g_2}  &   \C \ar[r]^{g_1}        & \C .     }
\end{equation*}    
    Moreover, if $K(\f_1)$ and $K(\f_2)$ are both connected, then $g_1$ and $g_2$ are both unique up to affine conjugacies. 
\end{prop}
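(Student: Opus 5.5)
The plan is to reduce to the classical Straightening Theorem. First straighten $\f_1$ and then transport the result to $\f_2$ by lifting along $f_2$. Since $\f_1=(f_1,U_1,V_1)$ is a polynomial-like map of degree $d_1$, the Douady--Hubbard theorem gives a quasiconformal map $\eta_1$ defined on $V_1$ (hence on $U_1$) and a polynomial $g_1$ of degree $d_1$ with $\eta_1\circ f_1=g_1\circ\eta_1$ on $U_1$. In the degree-$1$ case we use the Koenigs linearization, as remarked in the paper. This produces the right-hand square of the diagram.

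For the left square we need a polynomial $g_2$ of degree $d_2$ and a quasiconformal $\eta_2$ on $U_2$ with $\eta_1\circ f_2=g_2\circ\eta_2$.

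\begin{enumerate}
\item Consider the proper map $F:=\eta_1\circ f_2: V_2\to \eta_1(V_1)$. It is a branched covering of degree $d_2$ between simply connected domains, and it is quasiregular.
\item Pull back the standard complex structure by $F$ to get a Beltrami coefficient $\mu=F^*\mu_0$ on $V_2$, with $\|\mu\|_\infty<1$ because $\eta_1$ is quasiconformal. Since $\eta_1$ is conformal on $K(\f_1)$ a.e., $\mu$ vanishes a.e.\ on $K(\f_2)=f_2^{-1}(K(\f_1))$. This is not needed for the statement but gives the hybrid version.
\item Extend $\mu$ by $0$ outside $V_2$. Measurable Riemann mapping gives a quasiconformal $\eta_2$ of $\C$ integrating $\mu$. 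Then $h:=F\circ\eta_2^{-1}$ is holomorphic and proper of degree $d_2$ from the simply connected domain $W:=\eta_2(V_2)$ onto $\eta_1(V_1)$.
\end{enumerate}

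The main obstacle is that $h$ is only a proper map between disks, not yet a polynomial. To fix this, shrink to a slightly smaller smooth disk. Choose a Jordan domain $V_1'$ with $U_1\Subset V_1'\Subset V_1$ whose boundary avoids critical values of $f_2$, and let $V_2'$ be the component of $f_2^{-1}(V_1')$ containing $U_2$. On $\eta_1(V_1')$, glue $h$ to $z\mapsto z^{d_2}$ outside via a quasiconformal interpolation in the annulus between $\partial\eta_1(V_1')$ and a large round circle, modifying $\eta_1$ outside $V_1'$ if needed. This is the usual surgery from the proof of the Straightening Theorem, applied to a degree-$d_2$ covering of annuli. Equivalently, conformally uniformize $\eta_1(V_1')$ and $\eta_2(V_2')$ to the disk, where $h$ becomes a proper holomorphic self-map, i.e.\ a Blaschke product $B$ of degree $d_2$. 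Then quasiconformally interpolate on an outer annulus between $B$ on $\partial\D$ and $z^{d_2}$ on a larger circle; both are degree-$d_2$ coverings of circles, so the interpolation exists.

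Re-integrating the resulting quasiregular map of $\C$ gives a holomorphic map of $\C$ that has degree $d_2$ and is conjugate to $z^{d_2}$ near $\infty$, hence a polynomial $g_2$. The corrected charts, still quasiconformal, serve as $\eta_2$ on $U_2$ and preserve $\eta_1$ on $U_1$. We may use the same target chart, since only the source coordinate needs changing: absorbing the target uniformization into $\eta_1$ would spoil the first square, so all corrections are placed on the $U_2$ side. The equation $\eta_1\circ f_2=g_2\circ\eta_2$ then holds on $U_2$.

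For uniqueness when $K(\f_1),K(\f_2)$ are connected, uniqueness of $g_1$ up to affine conjugacy is the Douady--Hubbard statement. For $g_2$, note that $g_2$ is determined by $\eta_1$ and $\eta_2$ up to precomposition by an affine map from changing $\eta_2$. Any two choices of $\eta_2$ differ by a map conformal on $\eta_2(K(\f_2))$ and conjugating the relevant data. Since $K(\f_2)$ is connected and full, the composite $\eta_2'\circ\eta_2^{-1}$ can be analyzed as in the classical proof, by comparing Böttcher-type coordinates outside $K$ through the lifted equation with $g_1$. This shows it is affine, so $g_2$ is unique up to affine change of coordinates.
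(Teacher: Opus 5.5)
Your proposal is correct and is essentially the paper's argument. You straighten $\f_1$ by the standard surgery, keep $\eta_1$ as the target chart, extend $\eta_1\circ f_2$ near $\overline{U_2}$ to a proper degree-$d_2$ quasiregular map of $\C$, and integrate its pulled-back structure on the source side so that $g_2$ is a polynomial. The paper gets that extension in one step as $\eta_1\circ F_2$, with $F_2$ glued to the power map in exterior Riemann coordinates of $V_2,V_1$, so your preliminary integration on $V_2$ and the Blaschke-model surgery are only a longer route to the same construction.

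For uniqueness the paper just cites Branner--Fagella, and your sketch is the classical argument behind that citation. Note that it relies on the hybrid normalization ($\bar\partial\eta_i=0$ a.e.\ on the filled-in Julia sets), which the statement leaves implicit but which uniqueness genuinely requires.
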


\begin{proof}

    The uniqueness part follows from \cite[Theorem 3.53]{Branner_Fagella_2014}.
    In the following, we focus on the existence part.

    By restricting $f_1$ and $f_2$ to smaller domains, we may assume that $U_1,U_2,V_1,V_2$ all have smooth boundaries.
    Pick $r>1$ and conformal map $\phi: \C\setminus \overline{V_1} \to \{ z : |z|>r \}$.
    This map can be extended homeomorphically to the boundary.
    Define a quasi-regular map $F: \C  \to \C  $ such that $\phi\circ F(z)=(\phi(z))^{d_1}$ for $z \in \C\setminus V_1$ and  $F(z)=f_1(z)$ for $z \in U_1$.     
    Let $\sigma_1$ be the $F$-invariant complex structure such that $\sigma_1$ coincides with the standard complex structure on $\C \setminus V_1$.
    By the Measurable Riemann Mapping Theorem, there exists a  quasiconformal mapping $\eta_1 : \C \to \C$ with $\lim_{z\to \infty}\eta_1(z)=\infty$ such that $\eta_1^{*}(\sigma_0)=\sigma_1$.
    Thus, $g_1:=\eta_1\circ F\circ\eta_1^{-1}$ is a polynomial of degree $d_1$.
    Since $F$ coincides with $f_1$ on $U_1$, we have $\eta_1 \circ f_1=g_1\circ\eta_1$.    

    Define a conformal map $\psi : \C \setminus \overline{V_2} \to \{ z : |z|>\sqrt[d_2]{r} \}$ 
    which can be extended homeomorphically to the boundary.
    Define a quasi-regular map $F_2: \C  \to \C  $ such that $\phi\circ F_2(z)=(\psi(z))^{d_2}$ for $z \in \C\setminus V_2$ and $F_2(z)=f_2(z)$ for $z \in U_2$.
    Define another complex structure $\sigma_2:=F_2^*(\sigma_1)$. 
    By the Measurable Riemann Mapping Theorem, there exists a  quasiconformal mapping $\eta_2 : \C \to \C$ with $\lim_{z\to \infty}\eta_2(z)=\infty$ such that $\eta_2^{*}(\sigma_0)=\sigma_2$.    
    Let $g_2:=\eta_1\circ F_2\circ\eta_2^{-1}$, then $g_2^*(\sigma_0)=( \eta_1\circ F_2\circ\eta_2^{-1} )^*(\sigma_0)=(\eta_2^{-1})^*F_2^*\eta_1^*(\sigma_0)=\sigma_0$.  
    Hence $g_2$ is a polynomial of degree $d_2$.
    Since $F_2$ coincides with $f_2$ on $U_2$, we have $\eta_1 \circ f_2=g_2\circ\eta_2$.

\end{proof}

The map $(\eta_1,\eta_2)$ is called the \emph{the straightening map} of $(\f_1,\f_2)$, and $(g_1,g_2)$ is called the \emph{renormalized map} of $(\f_2,\f_2)$.

A more general version of generalized polynomial-like maps and the straightening theorem is discussed by Inou-Kiwi \cite{inou2012combinatorics}.
Proposition \ref{straightening} is in fact a special case of \cite[Theorem A]{inou2012combinatorics}.



\section{Perturbation of the Free Critical Orbits}\label{sec3}


We say two polynomial $f$ and $g$ in $\mathcal{C}_d$ are \emph{combinatorially equivalent} if $\L_\Q(f)=\L_\Q(g)$.
Denote $\comb(f)$ to be the combinatorial equivalence class of $f$.
We say that $f$ and $g$ in $\mathcal{C}_d$ are \emph{q.c. equivalent} if the B\"ottcher transition $\varphi_g^{-1}\circ\varphi_f$ extends to a quasiconformal map $\eta:\C \to \C$ such that $g\circ\eta=\eta \circ f$ on the Julia set of $f$.
Denote $\qc(f)$ to be the q.c. equivalence class of $f$.

It is easy to see that $\qc(f) \subset \comb(f)$.
Notice that if $f$ is combinatorially rigid if and only if $\qc(f)=\comb(f)$.

\subsection{Unique Fatou critical point with infinite orbit}\label{subsec31}
Let $f$ be a polynomial of degree $d\geq 3$ with some attracting periodic orbit $O$ which attracts at least two critical points counted with multiplicity.
By applying a translation conjugation, we may assume that $0 \in O$ and its attracting component contains a critical point.

\begin{lem}[unique Fatou critical point of infinite orbit]\label{centralize}
    There exists a polynomial $g \in \qc(f)$ such that every critical point in attracting basins has finite forward orbit except a unique critical point $\omega$ attracted to $0$ with infinite forward orbit.
\end{lem}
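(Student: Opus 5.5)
We prove Lemma \ref{centralize} by a quasiconformal surgery on the Fatou set that leaves the dynamics on $K_f \setminus (\text{attracting basins})$ untouched; such a deformation automatically produces a polynomial in $\qc(f)$.

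\textbf{Step 1: model maps on the basins.} Let $B_0, B_1=f(B_0),\dots,B_{p-1}$ be the cycle of attracting components containing $0$, and let $B_0$ contain a critical point. By Theorem \ref{roeschyin} each $\partial B_j$ is a Jordan curve, and each $f:B_j\to B_{j+1}$ is a proper map of degree $D_j$. The composition $f^p|_{B_0}$ has degree $D\ge 2$. Conformally, each $f|_{B_j}$ is a Blaschke product $b_j$ on $\D$ of degree $D_j$.

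\textbf{Step 2: choose new models.} Since the cycle $O$ attracts at least two critical points counted with multiplicity, we may arrange the critical points as follows:
\begin{itemize}
\item on one component (say $B_0$, after relabeling) choose a Blaschke product with an attracting fixed point at the center, one critical point $\omega$ with infinite orbit, and every other critical point of that component placed at the center, hence with finite orbit;
\item on every other component $B_j$ of the cycle use $z^{D_j}$, sending center to center.
\end{itemize}
These models have the same degrees as the $b_j$. For strictly preperiodic Fatou components and for the other attracting cycles, replace each map between components by $z^{k}$ composed with a Möbius map sending centers to centers, so that all their critical points land on a superattracting point, or on a point of the center orbit, in finitely many steps.

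\textbf{Step 3: glue and straighten.} Choose a quasisymmetric conjugacy on the boundary circles, e.g. a q.s. homeomorphism $h_j$ of $\RZ$ conjugating $b_j|_{\partial\D}$ to the model boundary map. Such $h_j$ exist because degree-$D$ expanding circle coverings with matching combinatorics are quasisymmetrically conjugate, and on the components involved $b_j$ is expanding on the circle. Extend each $h_j$ quasiconformally to the disk, and build a quasiregular map $F$ equal to $f$ off the basins and to the conjugated models inside them. Preimage components at infinitely many levels are handled by pulling back via $f$, which is conformal, so the dilatation stays uniformly bounded. Then pull back the standard structure to get an $F$-invariant Beltrami differential, equal to zero on $\C\setminus(\text{basins})$, and straighten by the Measurable Riemann Mapping Theorem. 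This gives a polynomial $g$ with a quasiconformal conjugacy $\eta$. The map $\eta$ is conformal near $\infty$ and normalized there, so it coincides with $\varphi_g^{-1}\circ\varphi_f$. Hence $g\in\qc(f)$, and by construction $\omega$ is the unique Fatou critical point with infinite orbit.

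\textbf{Main obstacle.} The delicate point is Step 3: getting a uniform dilatation bound and a genuinely invariant Beltrami coefficient. Critical points in basins can have orbits passing through many preimage components, and the boundary conjugacies must be compatible along these orbits. I would handle this by doing the surgery only on the periodic components plus finitely many preperiodic components containing critical points or critical values. On all remaining components the structure is defined by pullback through univalent branches, which preserves bounded dilatation.
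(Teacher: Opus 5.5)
Your proposal is correct in outline, but it takes a genuinely different route from the paper.

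\textbf{The paper's route.} The paper never touches the boundaries of the Fatou components. It first produces $h\in\qc(f)$ with all Fatou critical orbits finite, by a finite sequence of local surgeries. Each is supported in a disk $U_k\Subset V_k\Subset B_k$ around the critical points of one critical component, with periodic components treated before preperiodic ones. There the map is replaced by a unicritical model $z\mapsto\psi_k^{-1}(\phi_k(z)^{\delta_k})$ sending the new critical point to the finite-orbit point of the image component; this forces the cycle of $0$ to become superattracting. A second local surgery near a critical point $\omega'$ with $h^m(\omega')=0$ then makes that single orbit infinite. Since $F=f$ near the Julia set at every stage, quasiregularity of $F$ and the conjugacy on $J$ come for free.

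\textbf{Your route.} You replace all the Fatou dynamics at once by Blaschke models glued along $\partial B_j$, which is essentially the McMullen--Sullivan/Milnor model description. This gives one-step control of the critical configuration and transparent dilatation bookkeeping. It does, however, need several ingredients that you should state explicitly:
\begin{itemize}
\item Jordan boundaries for every modified component, preperiodic ones included (these follow from Roesch--Yin by pulling back), so that $F$ is continuous on $J$.
\item A compatible family with $h_{j+1}\circ b_j=M_j\circ h_j$, obtained by lifting backwards around the cycle and adjusting by a deck transformation.
\item Quasisymmetry of conjugacies between expanding real-analytic circle coverings.
\item A Rickman-type gluing lemma, to see that the continuous map $F$ (quasiregular on the modified components and equal to $f$ elsewhere) is quasiregular on $\C$. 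Julia critical points lying on $\partial B_j$ are finitely many and removable.
\end{itemize}
Alternatively, build the disk extensions by the pull-back argument so that they conjugate the first-return Blaschke product to the model return map on a collar of $\partial\D$. Then $F=f$ near $J$, exactly as in the paper, and no gluing lemma is needed.

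\textbf{One caution.} Suppose the cycle components carry only one simple critical point, with the other attracted critical point in a strictly preperiodic component. Then your relabeling puts $\omega$ on the cycle, and the cycle is no longer superattracting. The lemma as literally stated still holds. However, Subsection~\ref{subsec32} needs $0$ to be a periodic critical point, so in that case you should place $\omega$ in a preperiodic component instead. The paper's order (first center every Fatou critical point, then unfold a critical point other than the one at $0$) keeps $0$ critical automatically.
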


\begin{proof}

    The proof is done in two steps.
    First, we establish the following claim.

    \begin{clm}\label{centralize2}
        There exists a polynomial $h \in \qc(f)$ such that every critical point in attracting cycles has finite forward orbit.
    \end{clm}

    \begin{proof}[Proof of Claim \ref{centralize2}]
    We apply quasiconformal surgeries to every critical component to change the position of critical points.
    The construction is done component by component by the following rule:
    The periodic critical components will be modified before the strictly preperiodic ones.
    
    Suppose that $f$ has $n$ critical component containing critical points with infinite forward orbit.
    We will construct a finite sequence of maps $\{g_k\} \subset \qc(f)$ for $0\leq k \leq n$ such that $g_0=f$, and $g_k$ has $n-k$ critical attracting components containing critical points with infinite forward orbit.
    In every step $0\leq k \leq n-1$, we pick a critical attracting component $B_k$ containing critical points with infinite forward orbit.
    By induction and the order we pick, we may assume that $g_k(B_k)$ has a unique preperiodic point $b_k$.
    Take simply-connected domains $U_k\Subset V_k\Subset B_k$ such that $g_k: U_k \to g_k(U_k)$ and $g_k: V_k \to g_k(V_k)$ are both proper maps of degree $\delta_k=\deg(g_k|_{B_k})$.
    Pick Riemann mappings $\phi_k: U_k \to \D$, $\psi_k : g_k(U_k) \to \D$ such that $\psi(b_k)=0$.
    Define a quasiregular map $F_k: \C \to \C$ such that $F_k|_{\C \setminus V_k}=g_k$ and $g_k(z)=\psi_k^{-1}((\phi_k(z))^{\delta_k})$ for $z \in U_k$.
    Pulling back the standard complex structure $\sigma_0$ near $\infty$ and $b$, we get a $F_k$-invariant complex structure $\sigma_k$ with bounded dilatation.
    By the Measurable Riemann Mapping Theorem, there exists a quasiconformal map $\xi_k: \C \to \C$ such that $\xi_k^*(\sigma_0)=\sigma_k$ with $\xi_k(\infty)=\infty$, $\lim_{z\to \infty}\frac{\xi_k(z)}{z}=1$ and $\xi_k(0)=0$.
    It follows that $g_{k+1}:=\xi_k \circ F_k \circ \xi_k^{-1}$ is a polynomial of degree $d$ which fixes $0$.
    Since $g_{k+1}$ is conjugated to $F_k$ in $B'_k=\xi_k(B_k)$, there exists a unique critical point of $g_{k+1}$ in $B'_k$.
    Since $\sigma=\sigma_0$ and $F_k=g_k$ outside the basin of $B_k$, $\xi_k$ forms a conformal conjugation between $g_k$ and $g_{k+1}$ outside $B_k$, we see that $g_k$ has $n-k-1$ critical attracting components containing critical points with infinite forward orbit.
    Moreover, we have $g_{k+1} \in \qc(g_k)=\qc(f)$.
    By repeating the procedure $n$ times, we find the polynomial $g_n$ whose critical points in Fatou components have finite forward orbit.
    Set $h=g_n$, the proof is finished.
    \end{proof}


    By the assumption, there exists another critical point $\omega'$ such that $h^m(\omega')=0$ for some $m \geq 0$.
    To meet the requirement of $g$, we need to change the position of $\omega'$ to have infinite forward orbit.
    The construction is very similar to the construction before.
    We can define a quasi-regular map $F: \C \to \C$ agreed with $h$ outside a neighborhood of $\omega'$ such that the forward orbit of $\omega'$ under $F$ is infinite.
    $F$ determines a complex structure with bounded dilatation, then $F$ is quasiconformally conjugated to a polynomial $g$ such that $g\in \qc(h)=\qc(f)$ and $g$ has a unique Fatou critical point $\omega$ with infinite forward orbit.

\end{proof}

Since $g \in \qc(f)$, we replace $f$ by $g$ in the following discussion.
Hence we may assume that $f$ itself satisfies the Lemma \ref{centralize}.

\subsection{Space with marked critical points}\label{subsec32}

Let $\F$ denote the space of monic polynomials of degree $d$ with a critical point $0$.
Then we have $f \in \F$.
We parameterize the space $\F$ by their critical points.
Define a map $\rho: \C^{d-1} \to \F$ by assigning each  $\a=(c,c_2,\dots,c_{d-2},b) \in \C^{d-1}$ to the polynomial
$$
    f_\a(z):=d\int_{0}^{z}   \zeta(\zeta-c)(\zeta-c_2)\cdots(\zeta-c_{d-2})   \mathrm{d} \zeta+b
$$
in $\F$ whose critical points precisely are $0,c,c_2,\dots,c_{d-2}$. 
These critical points are holomorphic maps of $\a \in \C^{d-1}$ and will be denoted by $0,c(\a),c_2(\a),\dots,c_{d-2}(\a)$.

Since $\rho$ is surjective, we can lift the polynomial $f$ to some $\a_0 \in \C^{d-1}$ such that $f_{\a_0}=f$ and $c(\a_0)=\omega$ is the critical point which is attracted to $0$ with infinite forward orbit.
The dynamical property of $f=f_{\a_0}$ is concluded as follows.
We decompose $\{ 2,3,\dots,d-2 \}$ into the union of $I_F$ and $I_\infty$.
\begin{enumerate}
    \item The critical point $\omega=c(\a_0)$ is attracted to the Fatou compnent of $0$.
    \item  $0$ and $c_k(\a_0)$ for $k \in I_F$ are critical points with finite forward orbit.
    For every $k \in I_F$, there exist $n_k,m_k$ such that
    $$
        f_{\a_0}^{m_k}(c_k(\a_0)) = f_{\a_0}^{n_k}(c_k(\a_0)), \quad k \in I_F.
    $$
    There exists a minimal $p$ such that $f_{\a_0}^p(0)=0$.

    \item  $c_k(\a_0)$ for $k \in I_\infty$ are Julia critical points with infinite forward orbit.   
\end{enumerate}
Let $d_0$ be the local degree of $f_{\a_0}$ at $0$.

To push the critical point $c(\a_0)$, we should fix the orbit of other critical points.
Thus, the pushing process will be done in the a sub-manifold of $\F$.
It can be parameterized as follows.

\begin{defi}[sub-manifold]
    Let $\AA \subset \C^{d-1}$ to be the sub-manifold containing $\a$ such that $f_\a^p(0)=0$ and $f_\a^{m_k}(c_k(\a))=f_\a^{n_k}(c_k(\a))$ for $k \in I_F$.
\end{defi}

Let $\ca$ denote the connectedness locus of $\AA$ which consists of $\a \in \AA$ such that $K_\a$ is connected.
By the definition of $\AA$, we have $\a_0 \in \AA$.
For every $\a \in \AA$, denote the attracting component containing $0$ by $B_\a$.
Sine $0$ is super-attracting, there is a B\"ottcher map $\phi_\a$ near $0$ whose modulus $|\phi_\a|$ can be extended continuously to $B_\a$.
For $0<s<1$, define $B_\a(s):=|\phi_\a|^{-1}([0,s))$.
Let 
$$s_\a:=\min \{ |\phi_\a|(z) : z \in {\rm Crit}(f_\a^p) \cap B_\a  \}.$$
Then $\phi_\a$ can be conformally extended to $B_\a(s_\a)$.

Similar the discussion of the basin of $\infty$, for every $t \in \RZ$, there exists a $s_\a^0(t) \in (0,1)$ such that the inverse $\psi_\a^0$ of $\phi_\a$ can be cannonically extended to the arc $[0,s_\a^0(t)]e^{2\pi i t}$ until it hits an iterative preimage of a critical point.
If $s_\a^0(t)=1$, we call the curve $\psi_\a^0([0,1)e^{2\pi i t})$ the \emph{internal ray} of angle $t$, and denoted by $\RRR_\a(t)$.
If the limit $\lim_{s\to 1}\psi_\a^0(se^{2\pi i t})=z$ exists, we say that the internal ray $\RRR_\a(t)$ lands at $z$.
Similar the case of external rays, for every $t \in \Q/\Z$ with $s_\a^0(t)=1$, then the internal ray $\RRR_\a(t)$ lands at a repelling or parabolic preperiodic point.
We also have a corresponding result to Lemma \ref{stablerepelling} concerning the stability of the internal rays landing at repelling preperiodic points. 

\subsection{Sectors, quadrilaterals, limbs}\label{subsec33}
In this part, we introduce the sectors and quadrilaterals with respect to the the Fatou component of $0$.
Given $\a \in \ca$, we say $\mathcal{I}=( (t^-,t^+)_+, (\theta^-,\theta^+)_+ ) $ is a \emph{combinatorics} for  $\a $, if
\begin{itemize}
    \item[(C1)] $s_\a^0(t^-)=s_\a^0(t^+)=1$;
    \item[(C2)] the internal rays $\RRR_{\a}(t^\pm)$ lands at repelling preperiodic points $z^\pm \in \partial B_{\a}$ which is not in the forward orbit of any critical point;
    \item[(C3)] the external rays $R_{\a}(\theta^\pm)$ land at $z^\pm$.
\end{itemize}
We call $(t_-,t_+)_+$ the \emph{internal arc} of $\I$, and $(\theta_-,\theta_+)_+$ the \emph{external arc} of $\I$. 
We define the \emph{sector} $S_{\a}(\I)$ with respect to $\I$ to be the connected component of $\C \setminus (\overline{R_{\a}(\theta^-)} \cup \overline{R_{\a}(\theta^+)} \cup \overline{\RRR_{\a}(t^-)} \cup \overline{\RRR_{\a}(t^+)} )$ containing internal rays with angles in $(t^-,t^+)_+$.
By the condition (C2) and Lemma \ref{stablerepelling}, we have the following fact concerning the holomorphic motion of sectors.

\begin{fact}[holomorphic motion of sectors]\label{fact} 
    Let $S_\a(\I)$ be a sector, there exists a neighborhood $\mathcal{U}\subset \AA$ of $\a$ such that for $\a' \in \mathcal{U}$, $\I$ is also a combinatorics for $\a'$, i.e. $S_{\a'}(\I)$ is well-defined.
    Moreover, there exists a holomorphic motion $h:\mathcal{U} \times S_\a(\I) \to \C$, $(\a,z)\mapsto h_\a(z)$ such that $h_{\a'}(\partial S_\a(\I))=\partial S_{\a'}(\I)$.
\end{fact}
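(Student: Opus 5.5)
The plan is to handle the boundary of the sector first, and then extend the motion from the boundary to the whole sector. The boundary $\partial S_\a(\I)$ consists of four pieces: the closures of the two external rays $R_\a(\theta^\pm)$, the closures of the two internal rays $\RRR_\a(t^\pm)$, and the two landing points $z^\pm$ (the point $0$ is where the internal rays meet).

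First I show that $\I$ stays a combinatorics near $\a$. By (C2), $z^\pm$ is repelling preperiodic and its forward orbit contains no critical point. Lemma \ref{stablerepelling} then gives a neighborhood $\mathcal{U}$ on which $s_{\a'}(\theta^\pm)=1$, and on which the landing point $z^\pm(\a')$ is still repelling preperiodic. The analogous statement for internal rays, mentioned at the end of Subsection \ref{subsec32}, gives $s^0_{\a'}(t^\pm)=1$ and landing at a repelling preperiodic point. Both landing points are the analytic continuation of $z^\pm$ along the repelling cycle, via the implicit function theorem. So the internal and external rays still land at the common point $z^\pm(\a')$.

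Second, I check that (C2) survives. The orbit of $z^\pm$ is a finite set lying off $\crit(f_\a)$. For $\a'$ close to $\a$, the continued orbit stays off the critical points by continuity. This shows $\I$ is a combinatorics for $\a'$, so $S_{\a'}(\I)$ is well defined.

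Third, I build the motion on the boundary. On the external rays, Lemma \ref{rayhm} gives the holomorphic motion $\psi_{\a'}\circ\varphi_\a$. On the internal rays, the same argument gives $\psi^0_{\a'}\circ\phi_\a$. This uses that $0$ is a superattracting fixed point of $f^p_{\a'}$ for every $\a'\in\AA$, so the Böttcher coordinate $\phi_{\a'}$ depends holomorphically on $\a'$. It also uses the functional equation to continue along rays that avoid critical preimages. At the landing points I use $z^\pm(\a')$, which is holomorphic in $\a'$.

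The main obstacle is proving that these pieces together form an injective motion. The two external rays can never meet, and neither can the two internal rays. An external ray and an internal ray lie in disjoint Fatou components (the basin of $\infty$ versus $B_{\a'}$). The landing points stay distinct because they are distinct at $\a$ and move continuously; I shrink $\mathcal{U}$ if needed. The $\lambda$-Lemma then extends the motion to the closure.

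Finally, I extend the motion to the interior of the sector. Shrink $\mathcal{U}$ to a polydisk, so that it is simply connected. Apply Slodkowski's extension theorem (or Bers–Royden when the base is a disk) to extend the boundary motion to a holomorphic motion of $\C$. Restrict it to $S_\a(\I)$.

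The extended map $h_{\a'}$ is a homeomorphism of the sphere. It therefore sends the complementary component of the boundary curve containing the internal rays with angles in $(t^-,t^+)_+$ onto the corresponding component at $\a'$, because that ray family moves along with the boundary. So $h_{\a'}(\partial S_\a(\I))=\partial S_{\a'}(\I)$. If only a one-parameter base is needed, Bers–Royden on a disk is enough.
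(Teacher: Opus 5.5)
Your treatment of the boundary is what the paper has in mind; the paper itself only cites (C2) and Lemma~\ref{stablerepelling}. Your version is correct. Persistence of the landing of $R_{\a'}(\theta^\pm)$ and $\RRR_{\a'}(t^\pm)$ at the continuation $z^\pm(\a')$, persistence of (C2), the motions $\psi_{\a'}\circ\varphi_{\a}$ and $\psi^0_{\a'}\circ\phi_{\a}$ on the rays, and your injectivity check together give a holomorphic motion of $\partial S_{\a}(\I)$ with $h_{\a'}(\partial S_{\a}(\I))=\partial S_{\a'}(\I)$. One implicit point, which the paper shares: ``$0$ is superattracting'' is not by itself enough for $\phi_{\a'}$ to depend holomorphically on $\a'$. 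You also need the local degree of $f^p_{\a'}$ at $0$ to be locally constant near $\a$ in $\AA$, i.e.\ no critical point sitting on the cycle of $0$ at $\a$ may move off it nearby.

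The step that does not go through is the extension to the interior. Slodkowski's theorem is about motions parametrized by the unit disk $\D$. Shrinking $\mathcal{U}$ to a polydisk does not bring you under its hypotheses, and simple connectivity of a higher-dimensional base is not what makes extension work: there are holomorphic motions over contractible two-dimensional bases that do not extend to $\CC$.

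This is not a corner case. Here $\dim\AA=d-2-|I_F|=1+|I_\infty|$, which is at least $2$ exactly when $f$ has Julia critical points with infinite orbit, the situation the paper is built for. Bers--Royden on a disk only covers $I_\infty=\emptyset$. So your claim that $h_{\a'}$ is a homeomorphism of the sphere, and the component-matching argument in your last paragraph, are unsupported. That argument also leans on the rays with angles in $(t^-,t^+)_+$ ``moving along'', but those rays are not part of your motion.

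The easy repair is to drop the interior. The statement only makes sense for the boundary anyway, since $h$ is defined on the open sector but evaluated on $\partial S_{\a}(\I)$. Every later use, e.g.\ in Lemma~\ref{free}, only needs $\partial S_{\a'}(\I)$ to move continuously, so that points strictly inside or strictly outside the sector stay on the same side for $\a'$ near $\a$. That follows from the boundary motion, the definition of the sector, and a reference point such as $\psi^0_{\a'}(\epsilon e^{2\pi i t})$ with $t\in(t^-,t^+)_+$ and $\epsilon$ small. If you really want a motion of the closed sector, you must replace Slodkowski by an extension theorem valid over several complex parameters (after shrinking $\mathcal{U}$), which is a different and deeper input.
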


Given $s<1,s'>1$, define the \emph{quadrilateral} with respect to $\I$ to be 
$$ Q_{\a}(\I,s,s'):= S_{\a}(\I) \setminus (\psi_{\a}(\{z:|z| \geq s'\}) \cup \psi_{\a}^{0}(\{ z : |z| \leq s \}) ).$$
By Fact \ref{fact}, as long as $K_\a$ is connected and $0$ is the unique critical point in the attracting basin of $0$, $ \partial Q_{\a}(\I,s,s') $ also admits a holomorphic motion in a neighborhood of $\a$ in $\AA$.

 \begin{figure}[h]\label{fig1}
  \begin{center}
   \vspace{2mm}
   \begin{minipage}{.48\linewidth}
    \includegraphics[width=\linewidth]{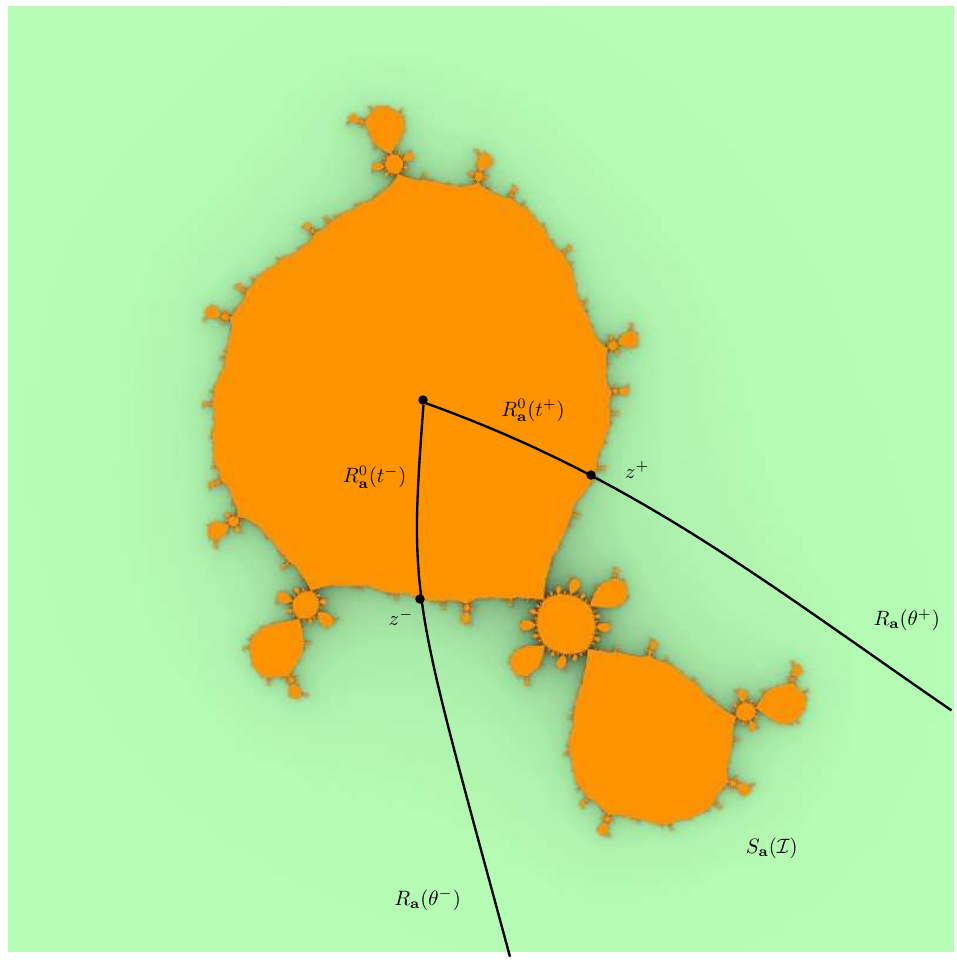}
    \caption{$S_{\a}(\I)$ }
  \end{minipage}
  \hspace{2mm}
  \begin{minipage}{.48\linewidth}
    \includegraphics[width=\linewidth]{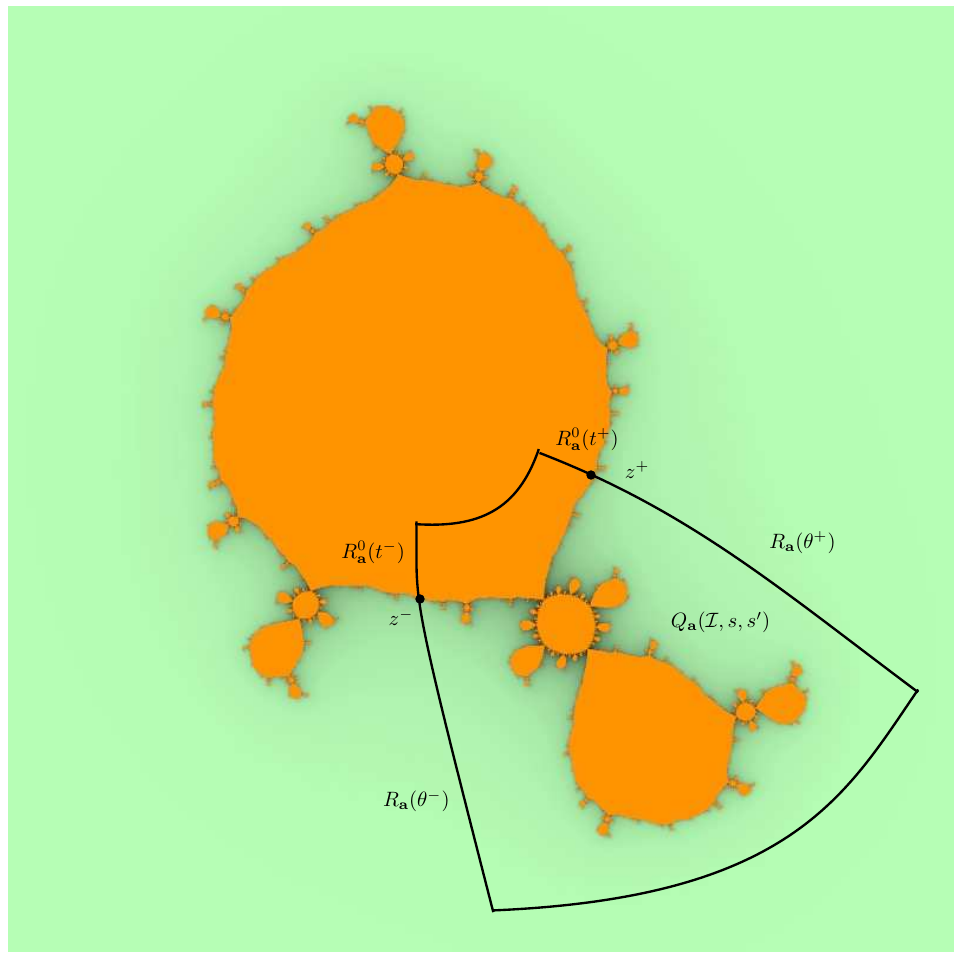}
    \caption{$Q_{\a}(\I,s,s')$}
  \end{minipage}
 \end{center}
\end{figure}



\begin{lem}[quadrilaterals and limbs]\label{land}
    Suppose that $f_\a$ for some $\a \in \AA$ has connected Julia set.
    Then for any $t_0 \in \RZ$ with $s_\a^0(t_0)=1$, the internal ray $\RRR_\a(t_0)$ lands at a point $x \in \partial B_\a$. 
    Moreover, we have
    \begin{equation}\label{limb}
        L_x= \bigcap_{(t^-,t^+)_+\ni t_0} (S_\a(\I) \cap K_\a) \setminus B_\a = \bigcap_{(t^-,t^+)_+\ni t_0, s<1,s'>1}  Q_{\a}(\I,s,s').
    \end{equation}
    where $L_x$ denote the limb with root $x$.
\end{lem}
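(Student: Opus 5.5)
The plan is to reduce both assertions to the Jordan-curve structure of $\partial B_\a$ and the limb decomposition of Theorem \ref{roeschyin}. The key tool is a good supply of combinatorics $\I$ whose internal arcs shrink to $t_0$.

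\emph{Step 1 (good rational angles).} First I will produce, for every $t_0$ and every $\epsilon>0$, angles $t^-<t_0<t^+$ (in positive cyclic order) within $\epsilon$ of $t_0$ that satisfy (C1)--(C3).
\begin{itemize}
\item Only countably many angles $t$ have $s_\a^0(t)<1$, since such a ray must hit an iterated preimage of a critical point. So (C1) excludes at most a countable set.
\item I will take $t^\pm$ to be periodic angles of large period under the angle-multiplication map induced by $f_\a^p$ on internal angles. Their internal rays land at periodic points of $\partial B_\a$, and these are repelling because $f_\a$ has no indifferent cycles.
\item Critical points with finite orbit meet only finitely many periodic cycles. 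The Julia critical points $c_k$, $k\in I_\infty$, have infinite orbits and so never land on a cycle, hence their forward orbits contain no periodic point. Taking the period large enough therefore gives (C2).
\item For (C3), Theorem \ref{roeschyin} says every repelling periodic point $z^\pm\in\partial B_\a$ is the landing point of some external ray. I will choose $\theta^\pm$ among the rays landing at $z^\pm$ so that the sector $S_\a(\I)$ contains exactly the internal rays with angles in $(t^-,t^+)_+$. When $L_{z^\pm}\neq\{z^\pm\}$, this means taking the extreme rays adjacent to $B_\a$.
\end{itemize}

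\emph{Step 2 (landing).} Next I will show that the landing points of these good rational internal rays are cyclically ordered on the Jordan curve $\partial B_\a$ in the same order as their angles, because internal rays are disjoint. Their set is dense in $\partial B_\a$; this follows from \eqref{gammaeq}, since preimages of a repelling periodic point under $f_\a^p|_{\partial B_\a}$ are dense. Consequently, as $t^\pm\to t_0$, the closed arcs of $\partial B_\a$ between $z^-$ and $z^+$ are nested and shrink to a single point $x$.

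The accumulation set of $\RRR_\a(t_0)$ is a connected subset of $\partial B_\a$ lying in each such arc. I will argue this via the Jordan-curve theorem: $\RRR_\a(t_0)$ stays in the region of $B_\a$ cut off by $\RRR_\a(t^\pm)$. Hence the ray lands at $x$.

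\emph{Step 3 (limb formula).} With $\I$ chosen as above, I will show that
$$(S_\a(\I)\cap K_\a)\setminus B_\a=\bigsqcup_{y}L_y,$$
the union over $y$ in the open arc of $\partial B_\a$ between $z^-$ and $z^+$. This uses Theorem \ref{roeschyin}: each limb is connected, meets $\overline{B_\a}$ only at its root, and avoids the external rays $R_\a(\theta^\pm)$ except possibly at $z^\pm$. So each limb lies entirely inside or entirely outside the sector according to where its root lies.

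Intersecting over shrinking arcs, every $y\neq x$ is eventually excluded, because the arcs shrink to $x$ and limbs are pairwise disjoint. This gives the first equality in \eqref{limb}.

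For the second equality, $Q_\a(\I,s,s')$ differs from $S_\a(\I)$ only by removing the region outside the equipotential $s'$ and the internal disk of level $s$. Intersecting over $s\to 1$ and $s'\to 1$ removes $B_\a$ and $\C\setminus K_\a$, and leaves $S_\a(\I)\cap K_\a\setminus B_\a$ up to boundary points. I will check that those boundary points lie in $L_x$ or disappear in the nested intersection.

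\emph{Main obstacle.} I expect the hardest part to be Step 1 together with the sector bookkeeping. One must guarantee that $z^\pm$ avoid all critical forward orbits and that external rays $\theta^\pm$ exist with $S_\a(\I)$ bounding exactly the right family of limbs. There is also a subtlety in proving that the arcs shrink to a point rather than to a nondegenerate arc. For that I would use the conjugacy \eqref{gammaeq} with $t\mapsto t^D$: it is expanding on the circle, so preimages of a periodic point are dense.
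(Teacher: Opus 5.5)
There is a genuine gap in Step 2, exactly where you flag the main subtlety. The landing points of internal rays are dense in $\partial B_\a$ only when $D:=\deg(f_\a^p|_{B_\a})=d_0$. In that case $\psi_\a^0$ extends to a homeomorphism $\overline{\D}\to\overline{B_\a}$ and there is nothing to prove. But in $\AA$ the free critical point $c(\a)$ may lie in $B_\a$; this is case 2 of Lemma \ref{wring}, where this lemma is actually applied, and then $D>d_0$. Internal rays come from the B\"ottcher coordinate at $0$, which has degree $d_0$, and their landing points are then \emph{not} dense. Otherwise the cyclically monotone map $t\mapsto\gamma^{-1}(\text{landing point of }\RRR_\a(t))$, defined on non-crashing angles, would extend to a continuous degree-one circle map $h$ with $h\circ\tau_{d_0}=\tau_D\circ h$, which forces $d_0=D$. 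So $\partial B_\a$ contains nondegenerate arcs, lying behind crashed rays, that contain no landing point of any $\RRR_\a(t)$.

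Your fallback (density of $f_\a^p$-preimages of a periodic point on $\partial B_\a$) does not help. The preimages inside these arcs are not landing points of internal rays, while (C2) requires $z^\pm$ to be such landing points. Note also that your shrinking argument never uses that $t_0$ itself does not crash. If it were valid, it would rule out such gaps altogether.

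The missing ingredient is the paper's expansion argument. For fixed $n$, $f_\a^{pn}$ has finitely many critical points in $B_\a$, and none lies on $\RRR_\a(t_0)$ because $s_\a^0(t_0)=1$. Hence for internal arcs $(t^-,t^+)_+\ni t_0$ thin enough, $S_\a(\I)\cap B_\a$ avoids them and $f_\a^{pn}$ is univalent there. Then $f_\a^{pn}$ is injective on $S_\a(\I)\cap\partial B_\a=\gamma(X_\I)$, so $\tau_D^n$ is injective on the arc $X_\I$, which forces $\LL X_\I<D^{-n}$. This is what makes the arcs shrink to a single point $x$. With it, your landing argument and Step 3 go through; in Step 3, invoking \eqref{limbdef} as the paper does is quicker than limb-by-limb bookkeeping.

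Step 1 needs three smaller repairs:
\begin{itemize}
\item ``No indifferent cycles'' is not a hypothesis of the lemma. Moreover, the paper uses the lemma for $f_{\b}$ before Lemma \ref{noindifferent} is proved, so assuming it would be circular.
\item $c_k(\a)$ for $k\in I_\infty$ need not have infinite orbit when $\a\in\AA$ is arbitrary.
\item Countability of crashing angles does not show that a chosen periodic angle satisfies (C1); a periodic internal ray can crash on $c(\a)$.
\end{itemize}
All three are handled the same way. Only finitely many cycles are non-repelling or meet a critical orbit, and only finitely many cycles of angles can crash, so taking large periods suffices.
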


\begin{proof}

  Let $\gamma: \RZ \to \partial B_\a$ be the parameterization such that $ f_\a^p \circ \gamma(t) = \gamma(t^D) $ holds for $t \in \RZ$.
  Here we have $D\geq d_0$.
  We associate every combinatorics $\I=((t^-,t^+)_+,(\theta^-,\theta^+)_+)$ with an arc $X_\I \subset \RZ$ so that $\gamma(X_\I)=S_\a(\I) \cap \partial B_\a$.

  \begin{clm}\label{clmcontinuity}
     If $\LL(t^-,t^+)_+\to 0$ and $ (t^-,t^+)_+ \ni t_0 $, then $\LL X_\I \to 0$.
    \end{clm}

    \begin{proof}[Proof of Claim \ref{clmcontinuity}]
        
  If $D=d_0$, i.e. $0$ is the unique critical point of $f_\a^p$ in $B_\a$, then $\psi_\a^0$ can be extended to a homeomorphism from $\D$ to $\overline{B_\a}$.
  Thus, $\gamma$ can be defined as $t \mapsto \psi_\a^0(e^{2\pi i t})$.
  In this case we have $X_\I=(t^-,t^+)_+$.
  Hence the conclusion holds.

  Now we consider the case that $D>d_0$.
  For any $n$, since $s_\a^0(t_0)=1$, we can find $\I_n=((t^-_n,t^+_n)_+,(\theta^-_n,\theta^+_n)_+)$ with $ (t^-,t^+)_+ \ni t_0 $ and $\LL(t^-_n,t_n^+)_+$ small enough such that $f_\a^{pn}$ is univalent on $S_\a(\I) \cap  B_\a$.
  It follows that $f_\a^{pn}$ is a homeomorphism on $\gamma(X_{\I_n})$.
  Hence $\tau_D^n$ is a homeomoprhism on $X_{\I_n}$.
  This shows that $\LL X_{\I_n}<D^{-n}$.
  Hence we also deduce $\LL X_\I \to 0$ as $\LL(t^-,t^+)_+\to 0$ for this case.   
  \end{proof}

  Thus, the intersection $\bigcap_{(t^-,t^+)_+\ni t_0 }X_\I$ is a singleton $\{T\}$.  
  Let $x=\gamma(T)$, and 
  $$K:= \bigcap_{(t^-,t^+)_+\ni t_0} (S_\a(\I) \cap K_\a) \setminus B_\a = \bigcap_{(t^-,t^+)_+\ni t_0, s<1,s'>1}  Q_{\a}(\I,s,s'). $$
  Since $L_x \subset Q_\a(\I,s,s')$ for every $\I$, we have $L_x \subset K$.
  For the converse, by Claim \ref{clmcontinuity}, for every $(T^-,T^+)_+ \ni T$, there exists $\I=((t^-,t^+)_+,(\theta^-,\theta^+)_+)$ such that $ (t^-,t^+)_+ \ni t_0 $ and $X_\I \subset (T^-,T^+)_+$.
  Hence $K\subset (S_\a(\I) \cap K_\a) \setminus B_\a \subset L_\a((T^-,T^+)_+,s')$.
  By \eqref{limbdef}, we have $K\subset L_x$.
  Hence $K=L_x$, i.e. \eqref{limb} holds.

  Finally, we show that the internal ray $\RRR_\a(t_0)$ lands at $x$.
  Let $Z$ be the limit set of $\psi_\a^0(se^{2\pi i t_0})$ as $s \to 1$.
  Then it is easy to verify that $Z \subset K\cap \partial B_\a=L_x \cap \partial B_\a =\{ x \}$.
  Therefore, $\RRR_\a(t_0)$ lands at $x$.

\end{proof}

\subsection{Change the dynamical position}\label{subsec34}

Let $\ell>1$ be the least integer such that $f_{\a_0}^\ell(c(\a_0)) \in B_{\a_0}$.
For $\a \in \AA$, we denote $v(\a):=f_\a^\ell(c(\a))$, and $\Phi(\a):=\phi_\a(v(\a))$ if it is defined.
The map $\Phi$ records the dynamical position of the orbit of the critical point $c(\a)$.
We assume that $\Phi(\a_0)=re^{2\pi i t_0}$.

\begin{defi}[critically separable]
    We say a parameter $\a \in \qc(\a_0)$ with $\arg\Phi(\a) = t \in \RZ$ is \emph{critically separable} if for $n\geq 0$ and every critical point $\omega$ of $f_\a^p$, there exists a combinatorics $\I=((t^-,t^+)_+,(\theta^-,\theta^+)_+)$ such that $\tau_{d_0}^n(t) \in  (t^-,t^+)_+$ and $\omega \notin S_\a(\I)$.
Let $\qc^*(\a_0)\subset \qc(\a_0)$ be the class of all $\a \in \qc(\a_0)$ which is critically separable.
\end{defi}

In this part, we apply quasiconformal surgeries to $\a_0$, to change the dynamical position of $v(\a_0)$ to obtain a map $\a_1 \in \qc^*(\a_0)$.

\begin{lem}[rotate the critical point]\label{wring}
    There exists $\a_1 \in \AA \cap \qc^*(\a_0)$ such that $\Phi(\a_1)=re^{2\pi i t_1} $ where $t_1$ is irrational and $\RRR_{\a_1}(t_1)$ land at a singleton limb of $B_{\a_1}$ whose forward orbit omits roots of limbs containing critical points of $f_{\a_1}^p$.
\end{lem}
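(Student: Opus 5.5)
The plan is to obtain $\a_1$ from $\a_0$ by a quasiconformal surgery supported in the grand orbit of $B_{\a_0}$ that only changes the Böttcher argument of the free critical value. Afterwards we check that the new angle $t_1$ has the required properties.

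\emph{Step 1: choice of $t_1$.} Work on $\partial B_{\a_0}$ with the parameterization $\gamma$ satisfying \eqref{gammaeq}. By Theorem \ref{roeschyin} only finitely many limbs of $B_{\a_0}$ contain critical points of $f_{\a_0}^p$. Their roots form a finite set $E$, and so do the roots of their forward images. A non-singleton limb has two rational external rays landing at its root, so its root has rational parameter; hence there are only countably many non-singleton limbs. The points whose $\tau_D$-orbit meets the finite set $\gamma^{-1}(E)$ are also countable. We therefore choose an internal angle $t_1$, with respect to the Böttcher coordinate of $B_{\a_0}$ near $0$, that is irrational and such that:
\begin{itemize}
\item[(i)] the internal ray of angle $t_1$ (and its images) avoids the countably many angles where $s^0(t)<1$;
\item[(ii)] its landing point, given by Lemma \ref{land}, is the root of a singleton limb;
\item[(iii)] the forward orbit of that landing point avoids $E$.
\end{itemize}
All exclusions are countable (or Lebesgue-null, using that $\gamma$ is a homeomorphism conjugating to $\tau_D$), so such $t_1$ exists.

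\emph{Step 2: the surgery.} Take an annulus $A$ in $B_{\a_0}(s_{\a_0})$ around $|\phi|=r$, and a small disk neighbourhood $W$ of $f^{\ell-1}_{\a_0}(c(\a_0))$ mapped into a neighbourhood of $v(\a_0)$. Build a quasiregular $F$ that equals $f_{\a_0}$ outside $W$. On $W$, $F$ is $f_{\a_0}$ post-composed with a qc map $\chi$ which, in Böttcher coordinate, rotates $re^{2\pi i t_0}$ to $re^{2\pi i t_1}$ and is the identity near $\partial W$. Since the orbit of the free critical point passes through $W$ only once before landing in the region where $\phi$ is defined, the usual pullback of $\sigma_0$ gives an $F$-invariant structure of bounded dilatation. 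Straighten it by MRMT, normalized as in Lemma \ref{centralize}, to obtain a polynomial $f_{\a_1}$ with $\Phi(\a_1)=re^{2\pi i t_1}$. The support of $\mu$ lies in the Fatou set, and the conjugacy is conformal near $\infty$ and tangent to the identity, so it extends $\varphi^{-1}\circ\varphi$. The remaining critical relations are unchanged, so $\a_1\in\AA\cap\qc(\a_0)$.

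\emph{Step 3: critical separability.} The straightening is a conjugacy on $\overline{B}$ respecting internal angles, rays and limbs, so properties (ii) and (iii) transfer to $f_{\a_1}$. Fix $n$ and a critical point $\omega$ of $f^p_{\a_1}$. The internal ray of angle $\tau_{d_0}^n(t_1)$ lands at a point $x_n$ in the forward orbit of the landing point of $t_1$, so $x_n\notin E$. If $\omega\in B_{\a_1}$ off the ray, a thin sector already excludes it. If $\omega\in L_y$ with $y\neq x_n$, then Lemma \ref{land} gives $\bigcap Q=L_{x_n}$, which does not contain $\omega$. Hence some combinatorics $\I$ with arcs shrinking to $\tau^n_{d_0}(t_1)$, built from repelling preperiodic endpoints with rational angles that avoid critical orbits (dense by Claim \ref{clmcontinuity}), satisfies $\omega\notin S_{\a_1}(\I)$.

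The main obstacle is Step 3 when $D>d_0$. There internal rays may fail to exist or may bifurcate at critical preimages, so we must ensure that sectors around $\tau^n_{d_0}(t_1)$ exist for every $n$ simultaneously. This is the reason for imposing (i) on the whole orbit when choosing $t_1$.
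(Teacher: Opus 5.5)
\textbf{There is a genuine gap in Step~3, in the case $c(\a_0)\in B_{\a_0}$.} Your overall plan matches the paper's (a qc surgery moving the B\"ottcher angle of the free critical value, then critical separability via Lemma \ref{land}). The problem is the claim that the straightening ``is a conjugacy on $\overline{B}$ respecting internal angles, rays and limbs''.

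When $c(\a_0)\in B_{\a_0}$, your modification region $W\ni f_{\a_0}^{\ell-1}(c(\a_0))$ lies in the cycle of $B_{\a_0}$. The map $\xi$ conjugates $F$, not $f_{\a_0}$, to $f_{\a_1}$. So $\xi$ carries the internal rays of $F$ (obtained by pulling back through $F$) to those of $f_{\a_1}$. An $F$-ray of angle $t$ agrees with $\RRR_{\a_0}(t)$ only if the forward orbit of that ray stays off $W$.

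Concretely, write $\ell=\ell'p$. After the surgery the free critical point has B\"ottcher angle $t_c$ with $\tau_{d_0}^{\ell'}(t_c)=t_1$, instead of the old $\arg\phi_{\a_0}(c(\a_0))$. So both the set of crashing angles and the monotone correspondence between internal angles and points of $\partial B$ change (here $D>d_0$). You checked (ii) and (iii) at the landing point $x$ of $\RRR_{\a_0}(t_1)$. Nothing makes $\xi(x)$ the landing point of $\RRR_{\a_1}(t_1)$, which is the point the lemma and the critical-separability check actually concern.

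Your generic choice of $t_1$ makes this worse. The set of angles whose ray orbit meets $W$ contains all $\tau_{d_0}$-preimages of a nonempty open interval. Its complement is therefore closed, nowhere dense and Lebesgue-null. Removing only countable or null sets will typically produce a $t_1$ whose ray orbit does pass through $W$.

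Condition (i) is also aimed at the wrong map. For $f_{\a_1}$, $s^0_{\a_1}(\tau_{d_0}^n(t_1))=1$ holds automatically: $\tau_{d_0}^m(t_1)=t_c$ would make $t_1$ periodic. So the obstacle you identify at the end is not the real one.

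\textbf{The missing idea is to make the surgery invisible to the orbit of the chosen ray.} The paper does this as follows.
\begin{enumerate}
\item Choose $t_\pm$ around the critical angle with $\Delta(t_-,t_+)_+=d_0^{-\ell'}$, so $\tau_{d_0}^{\ell'}(t_\pm)=t^*$.
\item Let $W$ be the sector of $B_{\a_0}$ cut out by $\RRR_{\a_0}(t_\pm)$ that contains $c(\a_0)$.
\item Take $V$ off the ray of angle $t^*$. This forces the modification domain $U$ to satisfy $U\Subset W$.
\item Choose $t_1$ in the Cantor set $T$ of angles whose $\tau_{d_0}$-orbit avoids $(t_-,t_+)_+$.
\end{enumerate}
Then $F=f_{\a_0}$ near the whole orbit of $\RRR_{\a_0}(t_1)$. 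Hence $\RRR_{\a_1}(t_1)=\xi(\RRR_{\a_0}(t_1))$ lands at $\xi(x)$, and only then do (ii) and (iii) transfer. The countable exclusions must be made inside the uncountable set $T$, not in the whole circle.

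When the modification lies outside the cycle of $B_{\a_0}$ (as in the paper's case~1), your argument works as written.

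Smaller points:
\begin{itemize}
\item Countability of non-trivial limbs should come from the disjoint external-angle arcs cut off by the two rays of Theorem \ref{roeschyin}. That theorem does not say those rays are rational.
\item Bounded dilatation needs every orbit, not just the critical one, to meet $W$ at most once.
\item $F(W)$ must contain both $v(\a_0)$ and the target point, so $W$ cannot be taken arbitrarily small.
\end{itemize}
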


\begin{proof}
    The main idea of the proof is that we modify the definition of $f_{\a_0}$ in a neighborhood of the critical point $c(\a_0)$ to get a quasi-regular map $F$ such that $F^\ell$ maps $c(\a_0)$ on an internal ray $\RRR_{\a_1}(t_1)$ with some specific angle $t_1$.
    We divide the choice of $t_1$ into two cases.

    {\bf case 1}:
    If $c(\a_0) \notin B_{\a_0}$, then $0$ is the unique critical point in $B_{\a_0}$.
    The B\"ottcher map $\varphi_{\a_0}$ extends to a homeomorphism from $\overline{B_{\a_0}}$ to $\overline{\D}$.
    Pick any irrational $t_1 \in \RZ$ such that the limb $L_x$ with $x=\psi_{\a_0}^0(e^{2\pi i t_1})$ is a singleton and $L_{x_n}$ does not contain a critical point of $f_{\a_0}^p$ for every $n \geq 0$ where $x_n:=f_{\a_0}^{np}(x)$.

    {\bf case 2}:
    If $c(\a_0)\in B_{\a_0}$, then the B\"ottcher map $\phi_{\a_0}$ can be defined at $c(\a_0)$.
    Assume that $\phi_{\a_0}(c(\a_0))=r_0 e^{2\pi i t_0'}$.
    In this case, $\ell$ must be a multiple of $p$.
    We assume that $\ell=\ell'p$.
    Pick an interval $(t_-,t_+)_+$ containing $t_0'$ with $s_{\a_0}^0(t_\pm)=1$ and length $\LL(t_-,t_+)_+=d_0^{-\ell'}$.
    Then we have $\tau_{d_0}^{\ell'}(t_\pm)=t^*$.
    Define a set 
    $$T:=\{ t\in \RZ : \tau_{d_0}^n(t) \notin(t_-,t_+)_+ \mbox{ for all $n$ } \}.$$ 
    Then $T$ is a forward invariant Cantor set under $\tau_{d_0}$ in $\RZ$.
    By Lemma \ref{land}, for every $t \in T$, we have $s_{\a_0}(t)=1$ and $\RRR_{\a_0}(t)$ land at a point in $\partial B_{\a_0}$.
    Let $W$ be the connected component of $B_{\a_0}\setminus (\RRR_{\a_0}(t_-) \cup \RRR_{\a_0}(t_+))$ containing $c(\a_0)$.
    Then for any $t \in T$, the forward orbit of $\RRR_{\a_0}(t)$ omits $W$. 
    Pick irrational $t_1 \in T$ such that the limb $L_x$ with $x=\psi_{\a_0}^0(e^{2\pi i t_1})$ is a singleton and $L_{x_n}$ does not contain a critical point of $f_{\a_0}^p$ for every $n \geq 0$ where $x_n:=f_{\a_0}^{np}(x)$.      

     Thus, in both cases, we find an angle $t_1$.
     Pick $\rho>s_{\a_0}$ and a small simply-connected domain $ V \subset B_{\a_0}$ containing $ v = \psi_{\a_0}^0(re^{2\pi i t_1})$ and $ v(\a_0)$ such that $\phi_{\a_0}(V)\subset \{ w: \rho^{d_0}<|w|<\rho, \arg w \neq t^* \}$.
     Let $U$ be the connected component of $(f_{\a_0}^\ell)^{-1}(V)$  containing $c(\a_0)$.
     Then  $f_{\a_0}^\ell: U \to V$ is a proper map of degree $2$.
     Moreover, for case 2, we have $U\Subset W$.
     Define a quasiregular map $F:\C \to \C$ such that $F|_{\C\setminus U}=f_{\a_0}$ and $F^\ell(c(\a_0))=v$.
     By pulling back the standard complex structure $\sigma_0$ near $\infty$ and $0$ by $F$, we obtain a $F$-invariant complex structure $\sigma$ with bounded dilatation.
     By the Measurable Riemann Mapping Theorem, there exists a quasiconformal map $\xi:\CC \to \CC$ such that $\xi^*(\sigma_0)=\sigma$ and $\xi(\infty)=\infty$, $\xi(0)=0$ and $\xi'(0)=1$.
     It follows that $\xi\circ F\circ \xi^{-1}$ is a polynomial fixing $0$.
     By assigning the critical points, we find $\a_1 \in \AA$ such that $f_{\a_1}= \xi\circ F\circ \xi^{-1}$.

     Notice that $\xi$ forms a conformal conjugation between $F|_{B_{\a_0}(\rho)}=f_{\a_0}$ and $f_{\a_1}$.
     Hence $ \phi_{\a_1}\circ \xi $ forms a conformal conjugation between $f_{\a_0}$ and $z\mapsto z^{d_0}$.
     Combining the fact that $\xi'(0)=1$, we see that $\phi_{\a_1}\circ \xi=\phi_{\a_0}$.
     It follows that
     \begin{align*}
        \Phi(\a_1): &= \phi_{\a_1}(f_{\a_1}^\ell(c(\a_1))) = \phi_{\a_1}\circ f_{\a_1}^\ell \circ \xi(c(\a_0) \\
        &= \phi_{\a_1} \circ \xi \circ F^\ell(c(\a_0)) =\phi_{\a_0}(F^\ell(c(\a_0)))= r e^{2\pi t_1}.
     \end{align*}
     Moreover, since the forward orbit of $\RRR_{\a_0}(t_1)$ omits $W$, $F=f_{\a_0}$ on the orbit of $\RRR_{\a_0}(t_1)$.
     It follows that $s_{\a_1}(t_1)=1$ and $\RRR_{\a_1}(t_1)= \xi(\RRR_{\a_0}(t)) $ lands at $x':=\xi(x)$.
     
     Since $\xi$ is conformal on complement of attracting basin of $0$, $\xi$ forms a conformal conjugation between $f_{\a_0}|_{\Omega_{\a_0}}$ and $f_{\a_1}|_{\Omega_{\a_1}}$, and a quasiconformal conjugation on the Julia sets.
     Hence $\a_1 \in \qc(\a_0)$.
     It follows that $\xi$ preserves the limb structure.
     In particular, $x'=\xi(x) \in \partial B_{\a_1}$ also forms a singleton limb which omits roots of limbs containing critical points of $f_{\a_1}^p$.

     Finally, we show that $\a_1 \in \qc^*(\a_0)$.
     Let $x'_n:=f_{\a_1}^p(x')$, then $\RRR_{\a_1}(\tau_{d_0}^n(t_1))$ lands at $x_n'$.
     Since $x'$ is a singleton limb, we see that every $x_n'$ for $n \geq 0$ forms a singleton limb which omit critical points of $f_{\a_1}^p$.
     For every $n$ and critical point $\omega$ of $f_{\a_1}^p$ outside $B_{\a_1}$, since $\omega \notin L_{x_n'}$, by \eqref{limb} in Lemma \ref{land}, we can find a combinatorics  $\I=((t^-,t^+)_+,(\theta^-,\theta^+)_+)$ such that $\tau_{d_0}^n(t) \in  (t^-,t^+)_+$ and $\omega \notin S_\a(\I) \supset L_{x'_n}$.  
     It remains to separate the critical points of $f_{\a_1}^p$ in $B_{\a_1}$.
     Notice that the only possible critical point of $f_{\a_1}^p$ in $B_{\a_1}\setminus \{ 0 \}$ is $c(\a_1)$ and we are in case 2.
     Since $\tau_{d_0}^n(t_1)) \neq \arg \phi_{\a_1}(c(\a_1))$ for every $n$, we can find a combinatorics $\I =((t^-,t^+)_+,(\theta^-,\theta^+)_+) $ with $ \arg \phi_{\a_1}(c(\a_1)) \notin (t^-,t^+)_+ \ni \tau_{d_0}^n(t_1)$ .
     It follows that $c(\a_1) \notin S_\a(\I) \supset L_{x'_n}$.


     \end{proof}

     Next, we use the stretching deformation introduced by Branner-Hubbard \cite{branner1988iteration} to push the critical point $c(\a_0)$ down along the internal ray with angle $t_1$.
     The stretching deformation allows us to find a parameter ray in $\qc^*(\a_0) \cap \AA $ which has the same dynamical properties as $f_{\a_1}$.

\begin{lem}[stretching operation]\label{stretch}
    There exists a stretching ray 
    $$\mathcal{R}:=\{ \a(s):  0<s<1  \} \subset \qc^*(\a_0) \cap \AA$$
    such that $\a(r)=\a_1$ and $\Phi(\a(s))=se^{2\pi i t_1}$. 
    For every $s \in (0,1)$, there exists a quasiconformal map $\xi_s: \C \to \C$ such that $\xi_s \circ f_{\a_1}=f_{\a(s)} \circ \xi_s$ on $\C$.
    Moreover, $\xi_s$ maps the internal ray $\RRR_{\a_1}(t)$ to $\RRR_{\a(s)}(t)$,  maps the external ray $R_{\a_1}(\theta)$ to $R_{\a(s)}(\theta)$.    
   
\end{lem}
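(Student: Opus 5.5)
We will construct the ray by a Branner--Hubbard stretching in the basin of $0$, then straighten with the Measurable Riemann Mapping Theorem. The data fixed by Lemma \ref{wring} are that $f_{\a_1}$ has $\Phi(\a_1)=re^{2\pi i t_1}$, and that all other Fatou critical points have finite orbit, so the only free Fatou critical orbit is that of $c(\a_1)$. Let $\phi=\phi_{\a_1}$ be the B\"ottcher coordinate on $B_{\a_1}(s_{\a_1})$, which conjugates $f_{\a_1}^p$ to $w\mapsto w^{d_0}$.

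For $s\in(0,1)$, choose the real exponent $u=u(s)>0$ with $r^{u}=s$. Consider the stretching $w\mapsto w|w|^{u-1}$ on $\D$, which commutes with $w\mapsto w^{d_0}$ and preserves arguments. Pull back its Beltrami coefficient by $\phi$ to obtain a complex structure $\mu_s$ on $B_{\a_1}(s_{\a_1})$, with dilatation depending only on $u$.

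Next, spread $\mu_s$ to the whole grand orbit of $B_{\a_1}$ by iterated pullback under $f_{\a_1}$. This uses $B_{\a_1}(s_{\a_1})$ as a fundamental region, since every point of the basin eventually enters it. On components of the basin it is defined consistently because $f_{\a_1}$ is holomorphic, so the dilatation stays bounded by that of the stretch. Set $\mu_s=0$ on the rest of $\C$, including the basin of $\infty$, the Julia set, and the other Fatou basins.

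This $\mu_s$ is $f_{\a_1}$-invariant. Integrating it with the normalization $\xi_s(0)=0$, $\xi_s(\infty)=\infty$, $\xi_s(z)/z\to1$ yields a polynomial $\xi_s\circ f_{\a_1}\circ\xi_s^{-1}$. It is monic, has a critical point at $0$, and $\xi_s$ carries the critical points to critical points. Hence it equals $f_{\a(s)}$ for a suitable marking $\a(s)$.

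The relations $f^p(0)=0$ and $f^{m_k}(c_k)=f^{n_k}(c_k)$ are preserved by conjugacy, so $\a(s)\in\AA$. Since $\mu_s=0$ off the basin of $0$, $\xi_s$ is conformal on $\Omega_{\a_1}$. By uniqueness of the normalized B\"ottcher map, $\varphi_{\a(s)}\circ\xi_s=\varphi_{\a_1}$, so $\xi_s$ maps external rays to external rays with the same angle and $\a(s)\in\qc(\a_1)=\qc(\a_0)$.

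In $B$, the map $\phi_{\a(s)}\circ\xi_s\circ\phi^{-1}$ is a conformal conjugacy of $w^{d_0}$ after composing with the stretch. So $\phi_{\a(s)}\circ\xi_s=\phi\,|\phi|^{u-1}$, up to a rotation by a $(d_0-1)$-th root of unity, and the condition $\xi_s'(0)$-type normalization (comparing with $s=r$) removes that rotation. Thus internal rays go to internal rays of the same angle, and
\[
\Phi(\a(s))=\phi_{\a(s)}(\xi_s(v(\a_1)))=r^{u}e^{2\pi i t_1}=se^{2\pi i t_1}.
\]
At $u=1$ we have $\mu=0$, $\xi=\mathrm{id}$, and $\a(r)=\a_1$.

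Critical separability transfers to every $\a(s)$ because $\xi_s$ maps combinatorics $\I$ to combinatorics with the same angles, sectors to sectors, and critical points of $f^p$ to critical points. Conditions (C1)--(C3) are preserved since $\xi_s$ is a homeomorphic conjugacy respecting both kinds of rays.

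The main obstacle is the normalization issue in $B_{\a_1}$. We must ensure that the straightened B\"ottcher coordinate is exactly $\phi|\phi|^{u-1}$ and not a rotation of it, and that the marking $\a(s)$ (which critical point is called $c$) can be chosen continuously in $s$ so that $\mathcal{R}$ is a genuine ray. We will handle this by continuity in $s$. The solution $\xi_s$ depends continuously (indeed real-analytically) on $u$ by the Ahlfors--Bers theorem. The finite ambiguity must therefore be constant along the connected interval $(0,1)$, and it is trivial at $s=r$.
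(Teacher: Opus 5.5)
Your proposal is correct and uses the same construction as the paper: the Branner--Hubbard stretch pulled back by $\phi_{\a_1}$, spread by $f_{\a_1}$-invariance over the basin of $0$ with $\sigma_0$ elsewhere, then the Measurable Riemann Mapping Theorem, with critical separability transported by $\xi_s$. The real difference is that you normalize $\xi_s$ at the other end, and this changes how the two ray statements are proved.

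\textbf{The paper's route.} It imposes $\xi_s(0)=0$ and $\lim_{z\to 0}\xi_s(z)/z=1$, and reads off preservation of internal rays from that. It obtains $\varphi_{\a(s)}\circ\xi_s=\varphi_{\a_1}$ only indirectly: it follows a rational internal/external ray pair landing at a repelling periodic point via Lemma \ref{stablerepelling} for $s$ near $r$, and then uses a connectedness argument in $s$.

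\textbf{Your route.} You normalize at $\infty$, so monicity and $\varphi_{\a(s)}\circ\xi_s=\varphi_{\a_1}$ follow at once from uniqueness of the tangent-to-identity B\"ottcher map. The price is the $(d_0-1)$-th root of unity $\omega_s$ in $\phi_{\a(s)}\circ\xi_s=\omega_s\,\phi_{\a_1}|\phi_{\a_1}|^{u-1}$. You remove it by continuity in $s$ together with $\omega_r=1$. Continuity is clear, since $\mu_s$ is just $\frac{u-1}{u+1}$ times a fixed unimodular Beltrami differential. It does require $\phi_{\a}$ to be normalized continuously along $\mathcal{R}$, which the paper tacitly assumes as well.

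\textbf{Your route is the sounder one.} For $s\neq r$ one has $|\xi_s(z)|\asymp|z|^{u}$ near $0$, so the paper's condition $\lim_{z\to0}\xi_s(z)/z=1$ cannot actually be imposed. A normalization at $0$ would also not produce a monic polynomial.

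\textbf{Two small clean-ups.} First, say explicitly that your limit $\xi_s(z)/z\to 1$ is taken as $z\to\infty$. Second, drop the remark about a ``$\xi_s'(0)$-type normalization'': $\xi_s$ is not differentiable at $0$ when $s\neq r$, and it is your final continuity argument that removes the rotation.
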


\begin{proof}
     For $s \in (0,1)$, let $\ell_s: \C \to \C$ be the linear map given by $\ell_s(z)=z|z|^{\nu(s)-1}$ where $\nu(s):=\frac{\log s}{\log r}$.
     Then $\ell_s$ commutes with $w \mapsto w^{d_0}$.
     Let $\sigma_s:=\ell_s^*(\sigma_0)$ where $\sigma_0$ denote the standard complex structure.
     Let $\sigma'_{s}$ be the complex structure such that 
    \begin{itemize}
        \item $\sigma'_{s}:=(\phi_{\a_1}^c)^*(\sigma_s)$ on $B_{\a_1}( s_{\a_1})$;
        \item $\sigma_s'=\sigma_0$ outside the attracting basin of $0$;
        \item $f_{\a_1}^*(\sigma'_s)=\sigma_s'$.
    \end{itemize}
     This complex structure $\sigma_s'$ has bounded dilatation and depends continuously on $s \in (0,1)$. 
    By the Measurable Riemann Mapping Theorem, there exists a quasiconformal map $\xi_s:\CC \to \CC$ such that $\xi_s^*(\sigma_0)=\sigma_s'$ and $\xi_s(\infty)=\infty$, $\xi_s(0)=0$ and $\lim_{z\to 0}\frac{\xi_s(z)}{z}=1$.
    It follows that $\xi_s\circ f_{\a_1}\circ \xi_s^{-1}$ is are polynomials fixing $0$ which depends continuously on $s \in \R$.
    Hence we have $\mathcal{R}:=\{ \a(s) : s\in (0,1) \}  $ forms a curve in $\AA$.
    It follows that $\xi_s \circ f_{\a_1}=f_{\a(s)} \circ \xi_s$ holds on $\C$.

    Next, we show that $\phi_{\a(s)}^c\circ \xi_s|_{B_{\a_1} (s_{\a_1} )} =\ell_s \circ \phi_{\a_1}^c$.     
    In fact, consider the map $\phi:= \ell_s \circ \phi_{\a_1}^c \circ (\xi_s|_{B_{\a_1} (s_{\a_1} )})^{-1} $, for $z \in \xi_s^{-1}( B_{\a_1} (s_{\a_1} ) ) $, we have 
    \begin{align*}
        \phi \circ f_{\a(s)}^p (z) &= \ell_s \circ \phi_{\a_1}^c \circ (\xi_s|_{B_{\a_1} (s_{\a_1} )})^{-1}  \circ f_{\a(s)}^p (z)  \\
                                &=  \ell_s \circ \phi_{\a_1}^c \circ f_{\a_1}^p \circ  (\xi_s|_{B_{\a_1} (s_{\a_1} )})^{-1} (z)  \\
                                &= (\ell_s \circ \phi_{\a_1}^c \circ (\xi_s|_{B_{\a_1} (s_{\a_1} )})^{-1}(z))^{d_0} = (\phi(z))^{d_0}.                             
                                \end{align*}
    Moreover, $\phi^*(\sigma_0)= (\ell_s \circ \phi_{\a_1}^c \circ (\xi_s|_{B_{\a_1} (s_{\a_1} )})^{-1})^*(\sigma_0) = (\xi_s|_{B_{\a_1} (s_{\a_1} )})^{-1})^* (\phi_{\a_1}^c)^* \ell_s^*(\sigma_0) =\sigma_0 $.
    It follows that $\phi$ is a B\"ottcher coordinate $\phi_{\a(s)}$ for $f_{\a(s)}$ in $\xi_s^{-1}( B_{\a_1} (s_{\a_1} ) )$.
    Since $\lim_{z\to 0}\frac{\xi_s(z)}{z}=1$ and $\xi_s$ is a conjugation, $\xi_s $ maps the internal ray $\RRR_{\a_1}(t)$ with $s_{\a_1}^0(t)=1$ to $\RRR_{\a(s)}(t_1)$.
    In particular, $s_{\a(s)}^0(t)=1$.
    Moreover, we have 
     \begin{align*}
        \Phi(\a(s)) &=\phi_{\a(s)}^c \circ f_{\a(s)}^\ell (c(\a(s)))  = \phi_{\a(s)}^c \circ f_{\a(s)}^\ell \circ \xi_s(-c(\a_1)) \\
        &= \phi_{\a(s)}^c \circ \xi_s  \circ f_{\a_1}^\ell(c(\a_1)) =\ell_s \circ \phi_{\a_1}^c \circ f_{\a_1}^\ell(c(\a_1)) \\
        &=\ell_s \circ \Phi (\a_1)=r^{\nu(s)} e^{2\pi i t_1} = s e^{2\pi i t_1} .
    \end{align*}

    Since $\sigma_s'=\sigma_0$ on $\Omega_{\a_1}$, then $\xi_s|_{\Omega_{\a_1}}$ is a conformal conjugacy between $f_{\a_1}$ and $f_{\a(s)}$.
    Thus, $\varphi_{\a(s)}\circ \xi_s$ conjugates $f_{\a_1}$ and $z\mapsto z^d$.
    We claim that $ \varphi_{\a(s)}\circ \xi_s $ is the B\"ottcher map $\varphi_{\a_1}$ on $\Omega_{\a_1}$.
    By the uniqness of the B\"ottcher map, it suffices to show that $ \varphi_{\a(s)}\circ \xi_s $ maps some external ray $R_{\a_1}(\theta)$ to $R_{\a(s)}(\theta)$.
    Pick a $t \in \QZ$ such that the internal ray $\RRR_{\a_1}(t)$ lands at a repelling periodic point $z_0$ of some period $q$, which is the landing point of the external ray $R_{\a_1}(\theta)$.
    By Lemma \ref{stablerepelling}, for $s$ close to $r$, $\RRR_{\a(s)}(t)$ and $R_{\a(s)}(\theta)$ both land at common periodic point $z_0(s)$ of period $q$.
    By the continuity of $\xi_s$, we have $\xi_s(z_0)=z_0(s)$ and hence $\xi_s(R_{\a_1}(\theta))=R_{\a(s)}(\theta)$.
    It follows that $ \varphi_{\a(s)}\circ \xi_s =\varphi_{\a_1}$ on $\Omega_{\a_1}$ for $s$ close to $r$.
    By a typical connectedness argument, we conclude that $ \varphi_{\a(s)}\circ \xi_s =\varphi_{\a_1}$ on $\Omega_{\a_1}$ holds for $s \in (0,1)$.
    Hence $\xi_s$ maps the external ray $R_{\a_1}(\theta)$ to $R_{\a(s)}(\theta)$.
    Since $F=f_{\a_1}$ on the Julia set, we have $\mathcal{R} \subset \qc(\a_1) \cap \AA$.
    Combining the fact that $\xi_s$ preserves internal rays and external rays, we see that  $\mathcal{R} \subset \qc^*(\a_1) \cap \AA = \qc^*(\a_0) \cap \AA $.
   
     \end{proof}

\section{Dynamic and Lamination of the Limit Maps}\label{sec4}

In this section, we finish the proof of the main result (Theorem \ref{main}).
In fact, we will show that every limit point of $\a(s)$ as $s \to 1$ belongs to $\comb(\a_0)\setminus \qc(\a_0)$.

\subsection{The limit map} 
Let $\mathcal{X}$ denote the limit set of $\a(s)$ as $s \to 1$.
Then $\mathcal{X}\neq \emptyset$.
Pick a limit point $\b \in \mathcal{X}$, we may suppose that $\a_n:=\a(s_n) \to \b$ and $s_n \to 1$ as $n\to \infty$.
The main purpose of this part is to characterize the dynamics of $f_\b$.
Since $c(\a_n) \to c(\b)$ as $n \to \infty$, then we have $f_{\b}^{m_k}(c_k(\b)) = f_{\b}^{n_k}(c_k(\b))$ for $k \in I_F$ and $f_{\b}^p(0)=0$.
Hence $\b \in \AA$, i.e., the critical relations of critical points $c_k$ for $k \in I_F$ of the limit map is preserved.

\begin{lem}[critical points in attracting cycles]\label{attracting}
    Critical points of $f_{\b}$ in attracting cycles are contained in $\{ c_k(\b) : k \in I_F \} \cup \{ 0 \}$.
\end{lem}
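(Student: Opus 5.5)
We have to show that the critical points $c(\b)$ and $c_k(\b)$ for $k\in I_\infty$ are not attracted by any attracting cycle of $f_\b$. Since the remaining critical points $0$ and $c_k(\b)$, $k\in I_F$, have finite orbits, this is exactly the claim. The strategy is to argue by contradiction and use stability of attracting cycles under perturbation: attracting cycles, and the fact that a given critical point lies in their basins, are open conditions in parameter space. The maps $f_{\a_n}$ all lie in $\qc(\a_0)$, so their only attracting cycles are the quasiconformal images of those of $f_{\a_0}$. The only critical point of $f_{\a_n}$ in these basins with infinite orbit is $c(\a_n)$, and it is attracted to $0$. Any attracting behaviour of $f_\b$ that contradicts this will therefore transfer to $f_{\a_n}$ for large $n$.

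\textbf{Step 1: Julia critical points.} Let $k\in I_\infty$, and suppose $c_k(\b)$ lies in the basin of an attracting cycle of $f_\b$. Take a small disk $D$ around a point of that cycle with $f_\b^q(\overline D)\subset D$, and choose $N$ with $f_\b^N(c_k(\b))\in D$. For $n$ large we still have $f_{\a_n}^q(\overline D)\subset D$ and $f_{\a_n}^N(c_k(\a_n))\in D$, so $c_k(\a_n)$ lies in an attracting basin of $f_{\a_n}$. But $\xi_{s_n}$ from Lemma \ref{stretch} conjugates $f_{\a_1}$ to $f_{\a_n}$ and preserves the labelling of critical points along the continuous family, and $c_k(\a_1)$ lies in the Julia set since $\a_1\in\qc(\a_0)$. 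This is a contradiction. The labelling point needs a short continuity argument: the critical points vary continuously along $\mathcal R$ and the conjugacy $\xi_s$ varies continuously, so $\xi_s(c_k(\a_1))=c_k(\a(s))$.

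\textbf{Step 2: the critical point $c(\b)$.} The same argument gives a contradiction if $c(\b)$ is attracted to a cycle other than the cycle of $0$, because $c(\a_n)$ is attracted to $0$. It remains to exclude $c(\b)$ being attracted to $0$ itself, and this is the main obstacle. I would use the Böttcher coordinate. If $c(\b)$ is in the basin of $0$, then $v(\b)=f_\b^\ell(c(\b))$ is also in the basin. Its Green level near $0$ is then strictly less than $1$, and in fact some forward iterate lies in a small neighborhood $B_\b(\epsilon)$ where the Böttcher coordinate $\phi_\a$ depends continuously on $\a$; this uses that $0$ is a persistent superattracting point of local degree $d_0$ throughout $\AA$. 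Hence $|\phi_{\a_n}(f_{\a_n}^{\ell+jp}(c(\a_n)))|$ converges to a value below $1$. On the other hand, this quantity equals $s_n^{d_0^j}$ by Lemma \ref{stretch}, and it tends to $1$. For fixed $j$ this is a contradiction.

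The delicate points are two. First, the levels must be compared at a fixed iterate $j$ that brings $v(\b)$ into the region where $\phi_\b$ is defined, and this neighborhood has to be uniform in $n$. Uniformity follows from continuity of the Böttcher coordinate near a persistent superattracting point, which holds since only $0$ is critical near $0$ for $\a$ near $\b$. Second, the critical point $c(\a_n)$ may also lie in $B_{\a_n}$, so it has to be checked that $|\phi_{\a_n}|$ at the relevant iterate genuinely equals $s_n^{d_0^j}$ after the extension. This holds because $|\phi_\a|$ extends continuously to the whole component $B_\a$ as a function satisfying $|\phi_\a\circ f_\a^p|=|\phi_\a|^{d_0}$.
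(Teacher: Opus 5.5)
Your proposal is correct and is essentially the paper's proof. Persistence of a trapping neighbourhood $f^q_{\a}(\overline D)\subset D$ excludes $c_k(\b)$, $k\in I_\infty$, and excludes any cycle other than that of $0$ for $c(\b)$, while the levels $s_n^{d_0^j}\to 1$ from Lemma \ref{stretch} contradict $c(\b)$ being attracted to $0$. One small correction: the local degree at $0$ need not persist up to $\b$ (a priori $c(\b)$ could coincide with a point of the cycle of $0$, so ``only $0$ is critical near $0$'' is not yet justified); what you actually need is the one-sided bound $|\phi_{\a_n}(z)|\le K|z|$ on a fixed disk around $0$, which follows from $|f^p_{\a_n}(w)|\le C|w|^{d_0}$ uniformly in $n$, since every $f^p_{\a_n}$ has local degree exactly $d_0$ at $0$ --- a point the paper also passes over.
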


\begin{proof}
    It suffices to show that every $\omega(\b) \in \{ c(\b) \} \cup \{ c_k(\b): k \in I_\infty\}$ is not in attracting cycles.
    If  $\omega(\b)$ belongs to the attracting basin of some periodic point $x$ with some period $q$.
    There exists a neighborhood $U$ of $x$ and a neighborhood $\mathcal{U}$ of $\b$ such that $f_{\a}^q(U)\Subset U$ for $\a \in \mathcal{U}$.
    Thus $U$ is contained in an attracting basin of $f_\a$.
    Since $\omega(\b)$ is attracted by $x$, there exists $m$ such that $f_{\b}^m(\omega(\b))\in U$.
    By shrinking $\mathcal{U}$, we may assume that $f_\a^m(\omega(\a)) \in U$.
    Hence $\omega(\b)$ cannot be $c_k(\b)$ for any $k \in I_\infty$ since
    $c_k(\a)$ is in the Julia set for $\a$ on the stretching ray.  

    It remains to exclude the case that $\omega(\b)=c(\b)$.
    By above discussion, we see that $x$ must be $0$.
    Hence we may assume that $f_{\b}^m(c(\b)) \in B_\b$ with $|\phi_\b|( f_{\b}^m(c(\b)) )<\rho$ for some $0<\rho<1$.
    Then there exists a neighborhood $\mathcal{U}$ of $\b$ such that $|\phi_\a|( f_{\a}^m(c(\a)) )<\rho$ for every $\a \in \mathcal{U}$.
    On the other hand, by Lemma \ref{stretch}, we have $|\phi_{\a_n}|( f_{\a_n}^m(c(\a_n)) ) \to 1$ as $n \to \infty$.
    This is a contradiction.
    
\end{proof}
 
For $\a \in \AA$ and $n \geq 0$, denote $\crit_n(\a)$ denote the set of critical points of $f_\a^n$.
It admits the following decomposition:
$$\crit_n(\a)=\crit_{n,0}(\a) \cup \crit_{n,c}(\a) \cup \crit_{n,2}(\a) \cup \cdots \cup \crit_{n,d-2}(\a),$$
where $\crit_{n,k}(\a):=\{ z : f_\a^j(z)=c_k(\a) \mbox{ for some $0\leq j < n$} \}$ for $2\leq k \leq d-2$, and  $\crit_{n,c}(\a):=\{ z : f_\a^j(z)=c(\a) \mbox{ for some $0\leq j < n$} \}$.

An element in $\crit_n(\a)$ may not be well-defined in a neighborhood of $\b$ in $\AA$.
However, it is a well-defined continuous map on $\mathcal{R} \cup \{ \b\}$.
Thus, we can still use the notation $\omega(\a)$ to represent an element in $\crit_n(\a)$ for $\a \in \mathcal{R} \cup \{ \b\}$.

\begin{lem}[free critical orbit]\label{free}
        The internal ray $\RRR_{\b}(t_1)$ lands at $v(\b)$.
\end{lem}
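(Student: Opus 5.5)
First we establish the landing of $\RRR_{\b}(t_1)$ at a point $x\in\partial B_\b$, and then we show that $v(\b)=x$.

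\emph{Step 1: $\RRR_\b(t_1)$ is defined and lands.} By Lemma \ref{attracting}, $c(\b)$ is not attracted to $0$. Hence the only critical point of $f_\b^p$ in $B_\b$ is $0$. The B\"ottcher coordinate $\phi_\b$ therefore extends to a conformal map from $B_\b$ onto $\D$, and $s^0_\b(t)=1$ for every $t$. By Lemma \ref{land}, $\RRR_\b(t_1)$ lands at a point $x\in\partial B_\b$, and
\[
L_x=\bigcap_{\I} \bigl(S_\b(\I)\cap K_\b\bigr)\setminus B_\b .
\]
This requires $K_\b$ to be connected. That holds because $\mathcal{R}\subset\ca$ and the connectedness locus is closed.

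\emph{Step 2: $v(\b)\in\overline{S_\b(\I)}$ for every admissible $\I$.} Fix a combinatorics $\I=((t^-,t^+)_+,(\theta^-,\theta^+)_+)$ for $\b$ with $t_1\in(t^-,t^+)_+$. By Fact \ref{fact}, $\I$ is still a combinatorics for $\a$ near $\b$, and $\partial S_\a(\I)$ moves holomorphically, hence continuously, in $\a$. For $\a_n$ we have $\Phi(\a_n)=s_ne^{2\pi i t_1}$, so $v(\a_n)\in\RRR_{\a_n}(t_1)\subset S_{\a_n}(\I)$. The boundary curves of $S_{\a_n}(\I)$ converge, so any limit point of $v(\a_n)$ lies in $\overline{S_\b(\I)}$; in particular $v(\b)\in\overline{S_\b(\I)}$. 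Next, $v(\b)\in K_\b$, because the $v(\a_n)$ are uniformly bounded in filled Julia sets converging to $K_\b$ (or simply because the Green function vanishes on $v(\a_n)$ and is continuous). Finally, $v(\b)\notin B_\b$. Indeed, $|\phi_\a|$ is continuous near $\b$ on compact subsets $B_\a(\rho)$, and $|\Phi(\a_n)|=s_n\to1$; this is the argument already used in Lemma \ref{attracting}. Since the internal rays with angles $t^\pm$ are excluded, $v(\b)\in \bigl(\overline{S_\b(\I)}\cap K_\b\bigr)\setminus B_\b$. A small care is needed here with the closure. The points of $\overline{S_\b(\I)}\setminus S_\b(\I)$ in $K_\b\setminus B_\b$ are only $z^\pm$. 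Shrinking $(t^-,t^+)_+$ moves $z^\pm$ away, so the intersection over all $\I$ is unaffected.

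\emph{Step 3: conclude.} Intersecting over all $\I$ gives $v(\b)\in L_x$. It remains to show that $L_x=\{x\}$.

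This is the main obstacle. The limb structure of $f_\b$ is not obviously inherited from $f_{\a_1}$, where $L_{x}$ was a singleton by Lemma \ref{wring}. The plan is to use critical separability ($\mathcal{R}\subset\qc^*(\a_0)$) together with Theorem \ref{roeschyin}(2). Suppose $L_x\neq\{x\}$. Then some forward image $L_{x_n}$, with $x_n=f^{np}_\b(x)$ landing point of $\RRR_\b(\tau_{d_0}^n(t_1))$, contains a critical point $\omega(\b)$ of $f_\b^p$. For $\a_n$, critical separability gives combinatorics $\I$ with $\tau_{d_0}^n(t_1)\in(t^-,t^+)_+$ and $\omega(\a_n)\notin S_{\a_n}(\I)$. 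We can choose these $\I$ uniformly: take rational endpoints landing at repelling points for $\b$ and use Fact \ref{fact}, noting that in $\qc^*$ the rays are carried by the conjugacies $\xi_s$. Passing to the limit would then put $\omega(\b)$ outside the open sector $S_\b(\I)$, contradicting $\omega(\b)\in L_{x_n}\subset S_\b(\I)$.

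Making the separating combinatorics uniform in $n$ (so that it survives to $\b$) is the delicate point. I would first try to obtain it by fixing the separating angles for $\a_1$ and transporting them along $\mathcal{R}$ via $\xi_s$ (Lemma \ref{stretch}), then checking via Lemma \ref{stablerepelling} that they remain a combinatorics at $\b$.

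Once $L_x=\{x\}$, we get $v(\b)=x$, which is the landing point of $\RRR_\b(t_1)$.
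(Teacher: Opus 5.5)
Your Steps 1--2 match the first half of the paper's proof, and there you are more careful than the paper about the closure of $S_\b(\I)$ and about $v(\b)\notin B_\b$. Step 3 has the paper's architecture: Theorem~\ref{roeschyin}(2) plus critical separability. But Step 3 is not carried out, and the route you propose for the ``delicate point'' would fail.

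Lemma~\ref{stablerepelling} and Fact~\ref{fact} are openness statements. They propagate the co-landing of $\RRR(t^\pm)$ and $R(\theta^\pm)$ at repelling, non-postcritical points from a parameter where it is already known to nearby parameters. Fixing separating angles at $\a_1$ and transporting them along $\mathcal{R}$ tells you nothing at the limit $\b$. At this stage nothing prevents the following at $\b$:
\begin{itemize}
\item the landing points become parabolic (no indifferent cycles for $f_\b$ is Lemma~\ref{noindifferent}, whose proof uses the present lemma);
\item a critical orbit hits them;
\item the internal and external rays stop co-landing.
\end{itemize}
So you cannot check that the $\a_1$-combinatorics is a combinatorics for $\b$, and the limit step has nothing to pass through.

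The missing idea is to transport in the opposite direction and use monotonicity of sectors; then no uniformity is needed.
\begin{enumerate}
\item Fix $j$ and a critical point $\omega$ of $f^p$ with $\omega(\b)\in L_{x_j}$.
\item Critical separability at $\a_1$ (Lemma~\ref{wring}) gives $\I_1$ whose internal arc contains $\tau_{d_0}^j(t_1)$, with $\omega(\a_1)\notin S_{\a_1}(\I_1)$.
\item Choose a combinatorics $\I'$ \emph{for $\b$} whose internal arc contains $\tau_{d_0}^j(t_1)$ and lies strictly inside that of $\I_1$. Such $\I'$ with arbitrarily small arcs exist; this is the same input Lemma~\ref{land} uses.
\item By Fact~\ref{fact}, $\I'$ is a combinatorics for $\a_n$ with $n$ large. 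The conjugacies of Lemma~\ref{stretch} carry internal rays, external rays and critical points, so $\I'$ is a combinatorics on all of $\mathcal{R}$, in particular at $\a_1$.
\item Nestedness gives $S_{\a_1}(\I')\subset S_{\a_1}(\I_1)$.
\item Now $\omega(\b)\in L_{x_j}\subset S_\b(\I')$, an open sector whose boundary moves holomorphically near $\b$. Hence $\omega(\a_n)\in S_{\a_n}(\I')$ for $n$ large.
\item Conjugating back, $\omega(\a_1)\in S_{\a_1}(\I')\subset S_{\a_1}(\I_1)$, which contradicts the choice of $\I_1$.
\end{enumerate}

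Your half-sentence ``take rational endpoints landing at repelling points for $\b$'' points this way. Without the nestedness step, though, a $\b$-combinatorics has no reason to be separating on $\mathcal{R}$. The paper's terse appeal to Fact~\ref{fact} and critical separability relies on exactly this: combinatorics are always chosen at $\b$ and pushed onto $\mathcal{R}$, never the reverse.
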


\begin{proof}

    Suppose that the internal ray $\RRR_{\b}(t_1)$ lands at $x \in \partial B_{\b}$.
    First we show that $v(\b)$ is contained in the limb $L_x$ with respect to $B_{\b}$.
    We prove it by contradiction.
    By Lemma \ref{land}, if $v(\b)$ is not in this limb, then we can find a combinatorics $\mathcal{I}=( (t^-,t^+)_+, (\theta^-,\theta^+)_+ )$ such that  $(t^-,t^+)_+ \ni t_1$ such that $v(\b)\notin S_{\b}(\I)$.
    By the holomorphic motion of $\partial S_\a(\I)$ (Fact \ref{fact}), we see that for $\a \in \mathcal{U}\cap \mathcal{R}$, we have $v(\b) \notin S_\a(\I)$.
    Notice that $S_\a(\I)$ still contains the internal ray $\RRR_\a(t_1)$.
    Hence we find a contradiction since we have $v(\a) \in \RRR_\a(t_1)$ for $\a \in \mathcal{R}$.

    To prove the lemma, it suffices to show that the limb $L_x$ is a singleton.
    By Theorem \ref{roeschyin}, we only need to show that $L_{x_n}$ does not contain any critical point of $f_{\b}^p$ for every $n$ where $x_n:=f_{\b}^{np}(x) \in \partial B_{\b}$.
    We prove it by contradiction, if $L_{x_n}$ contains a critical point $\omega(\b) \in \crit_p(\b)$, then for every combinatorics $\I=((t^-,t^+)_+,(\theta^-,\theta^+)_+)$ with $ (t^-,t^+)_+ \ni t_1 $, we have $\omega(\b) \in S_\b(\I)$.
    By Fact \ref{fact}, $\partial S_\a(\I)$ moves holomorphically for $\a \in \mathcal{U}$.
    It follows that for $\a \in \mathcal{U}\cap \mathcal{R}$, there exists $\omega(\a) \in S_\a(\I) \cap \crit_p(\a)$.
    This contradicts the fact that $\a \in \mathcal{R} \subset \qc^*(\a_0)$ is critically separable.

\end{proof}

\subsection{Puzzles and polynomial-like maps}\label{subsec42}

To characterize the dynamics of $c_k(\b)$ for $k \in I_\infty$, we need to use the puzzle technique to construct generalized polynomial-like maps near the critical point $c_k(\b)$.

Pick $t^* \in \Q/\Z$ such that the internal ray $\RRR_{\b}(t^*)$ lands at a repelling periodic point $z^*$ on $\partial B_{\b}$.
Pick $\theta^* \in \QZ$ such that the external ray $R_{\b}(\theta^*)$ lands at $z^*$
Define a forward invariant graph of $f_\b$:
$$\varGamma(t^*):=\bigcup_{n\geq 0} f_{\b}^n(\RRR_{\b}(t^*)\cup R_{\b}(\theta^*)\cup \{ z^*\}),$$ 
and a forward invariant domain
$$X:= \C\setminus \left(|\varphi_{\b}|^{-1}([s',+\infty)) \cup \bigcup_{0\leq k <p} f_{\b}^k(|\phi_{\b}|^{-1}([0,s]))\right).$$
For every $n \geq 0$, connected components of $f_{\b}^{-n}(X\setminus \varGamma(t^*))$ are called \emph{puzzle pieces} of depth $n$.
For every $n$, define $\varGamma_n:=f_{\b}^{-n}(\varGamma(t^*))$, and $G_n:=f_{\b}^{-n}(\varGamma(t^*)\cup \partial X) \supset \varGamma_n$.
It follows that for every puzzle piece $P_n$ of depth $n$, we have $ G_n \subset G_{n+1}$, and $\partial P_n \subset G_n$,.

For any $z \in K_{\b}$ outside the basin of $0$, if the forward orbit of $z$ under $f_{\b}$ omits $\varGamma(t^*)$, then for every $n$, there exists a unique puzzle piece $P_n(z)$ of depth $n$ containing $z$. 
Define the \emph{impression} of $z$ by 
$K(z):=\bigcap_n \overline{P_n(z)}$.
If $z$ belongs to some limb $L_x$, then one may verify that $K(z)\subset L_x$.


\begin{fact}[holomorphic motion of puzzles]\label{fact2}

    For every $n$, there exists a neighborhood $\mathcal{P}_n \subset \AA$ of $\b$ and a holomorphic motion $h_n : \mathcal{P}_n \times G_n \to \C$, $(\a,z)\mapsto h_n^\a(z)$ such that for every $0\leq m < n$, the following diagram commutes.
\begin{equation*}
    \xymatrix{\ar @{} [dr]
   G_{n} \ar[d]^{f_{\b}^{n-m}} \ar[r]^{h^{\a}_n} & G_n^\a \ar[d]^{f_{\a}^{n-m}} \\
    G_m \ar[r]^{h_n^\a}        &  G_m^\a      }
\end{equation*}
where $G_n^\a:=h_n^\a(G_n)$.
For every puzzle piece $P_n$ of depth $n$, denote $P_n^\a$ be the bounded complementary component of $h_n^\a(\partial P_n)$.

Suppose that $f^n_{\b}(\omega(\b)) \in P_n$ for some $\omega(\b) \in \crit_{m,k}(\b)$ with $m\geq 0$, $n\geq 0$ and $k \geq 2$, then we have $f^n_\a(\omega(\a)) \in P^\a_n$ for every $\a \in \mathcal{P}_n\cap \mathcal{R}$.
\end{fact}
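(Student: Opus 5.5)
The statement has two parts: the existence of the holomorphic motions $h_n$ compatible with $f$, and the persistence of critical positions in puzzle pieces. I would prove the first by induction on $n$, starting from a holomorphic motion of the depth-zero graph $G_0=\varGamma(t^*)\cup\partial X$ and lifting it through the dynamics. I would prove the second by continuity along $\mathcal{R}\cup\{\b\}$.

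\emph{Step 0: choice of data.} Before building $X$ and $\varGamma(t^*)$, I would fix $t^*,\theta^*,s,s'$ generically.
\begin{itemize}
\item The point $z^*$ is a repelling periodic point, so its orbit contains no critical point. The graph $\varGamma(t^*)$ is therefore a finite union of internal and external rays landing on a repelling cycle.
\item I would take $s'>1$ and $s<1$ so that the critical orbits of $f_\b$ avoid $\partial X$. This is possible because, by Lemma \ref{attracting}, the only critical point in $B_\b$ with nonzero Böttcher level is excluded, so only countably many levels are forbidden.
\item I would take $t^*$, among the countably many admissible rational angles, so that no critical point of $f_\b$ is mapped into $\varGamma(t^*)$. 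Julia critical points have countable orbits, and each point lies on at most finitely many rays, so this excludes only countably many angles.
\end{itemize}
Consequently no critical point of $f_\b^n$ lies on $G_n$, for any $n$.

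\emph{Step 1: depth $0$.} By Lemma \ref{stablerepelling} and its internal analogue, the finitely many rays of $\varGamma(t^*)$ keep landing at the holomorphically moving repelling cycle for $\a$ in a neighborhood of $\b$. By Lemma \ref{rayhm}, together with $\psi_\a\circ\varphi_\b$ on the rays, they move holomorphically.

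The equipotential $|\varphi_\a|=s'$ moves holomorphically as long as $s'$ exceeds the escape levels of all critical points. This holds near $\b$ because $K_\b$ is connected and escape levels depend continuously on the parameter. The internal equipotentials $f_\a^k(|\phi_\a|^{-1}(s))$ move holomorphically by the holomorphic dependence of the local Böttcher map $\phi_\a$ on $\a\in\AA$, since $0$ stays superattracting of period $p$.

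These pieces fit together at their finitely many intersection points, which themselves move holomorphically. This gives $h_0$ on $\mathcal{P}_0$, and $h_0$ commutes with $f$ on $\varGamma(t^*)$ because each of its constituents does.

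\emph{Step 2: lifting.} Given $h_n$ on $\mathcal{P}_n$, I would use that $G_{n+1}$ contains no critical point of $f_\b$. Shrinking to $\mathcal{P}_{n+1}\subset\mathcal{P}_n$, the set $G_n^\a$ contains no critical value of $f_\a$, since $G_n$ is compact and critical values vary continuously. Then $f_\a\colon f_\a^{-1}(G_n^\a)\to G_n^\a$ is a local homeomorphism.

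For $z\in G_{n+1}$, I would define $h_{n+1}^\a(z)$ as the unique solution $w$ of $f_\a(w)=h_n^\a(f_\b(z))$ that depends continuously on $\a$ and equals $z$ at $\a=\b$. The implicit function theorem makes it holomorphic in $\a$, and injectivity follows from injectivity of $h_n^\a$ plus uniqueness of lifts. On $G_n\subset G_{n+1}$ this lift agrees with $h_n$, by uniqueness and the inductive commutation. Iterating gives the commutative diagram for all $0\le m<n$. The $\lambda$-lemma then extends each $h_n^\a$ to closures, so that $P_n^\a$ is well defined as the bounded complementary component of $h_n^\a(\partial P_n)$.

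\emph{Step 3: critical positions.} Fix $n$ and suppose $f_\b^n(\omega(\b))\in P_n$. By Step 0 this point lies in the open piece, at positive distance from $\partial P_n\subset G_n$. On $\mathcal{R}\cup\{\b\}$ the map $\a\mapsto f_\a^n(\omega(\a))$ is continuous, and $h_n^\a(\partial P_n)$ moves continuously. So for $\a\in\mathcal{R}$ close to $\b$ the point stays in the bounded component $P_n^\a$; I would shrink $\mathcal{P}_n$ accordingly.

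\emph{Main obstacle.} The difficulty is uniformity over the infinitely many $\omega\in\crit_{m,k}$ with $m$ arbitrary. For $j\le n$ the condition concerns only the finitely many points $f^{n-j}(c_k)$, $2\le k\le d-2$. For $j>n$ I would reduce it as follows. The piece $P_n$ containing $f^n(\omega)$ is one component of $f^{-(j-n)}$ of the finitely many depth-$(2n-j)$ or depth-$0$ pieces around $c_k$. The commutative diagram of Step 2 transports this component-membership to $P_n^\a$. Hence only finitely many conditions, indexed by pieces of depth at most $n$, need to be checked, and these can all be handled in one neighborhood.
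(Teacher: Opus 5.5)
\textbf{There is a genuine gap, and it is in Step 3.} Your Steps 0--2 are fine: the generic choice of $t^*,s,s'$, the motion of the depth-$0$ graph, and the lifting by the implicit function theorem are a detailed version of what the paper summarizes as ``Lemma \ref{stablerepelling} and the lifting property.''

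\textbf{Why the uniformity fails.} The statement asks for $f^n_\a(\omega(\a))\in P^\a_n$ for \emph{every} $\a\in\mathcal{P}_n\cap\mathcal{R}$, with $\mathcal{P}_n$ independent of $\omega$. This uniformity is what the proof of Lemma \ref{noindifferent} needs to get $f^{iq}_{\a_n}(\omega(\a_n))\in U_{\a_n}$ for all $i$. Your continuity argument instead shrinks $\mathcal{P}_n$ depending on $\omega$, and your reduction to finitely many conditions does not hold.
\begin{itemize}
\item For $j\le 2n$ the reduction is fine, because $f^n(\omega)$ then ranges over a finite set.
\item For $j>2n$ it breaks down. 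The set $f^{j-n}(P_n)$ is not a puzzle piece. All you get is that $f^n(P_n)$ is the depth-$0$ piece containing $f^{2n}(\omega)$, and that point is again an arbitrary iterated preimage of $c_k$.
\item These infinitely many points accumulate on all of $J_\b$, in particular on the cycle of $z^*$, which lies in $\partial P_0$. So there is no positive distance to $G_n$ to exploit, and no finite list of conditions.
\item Even if you knew which depth-$0$ piece contains $f^{2n}_\a(\omega(\a))$, you would still have to decide which of its $f_\a^{-n}$-pullbacks contains $f^n_\a(\omega(\a))$. That is the same tracking problem again.
\item A ``no crossing'' argument along $\mathcal{R}$ does not rescue this either. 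The set $\mathcal{R}\cap\mathcal{P}_n$ need not be connected, since the limit set $\mathcal{X}$ can be a nondegenerate continuum and the curve may leave and re-enter $\mathcal{P}_n$.
\end{itemize}

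\textbf{The missing ingredient} is the global quasiconformal conjugacy along the stretching ray from Lemma \ref{stretch}.
\begin{itemize}
\item The paper uses continuity at $\b$ only once, to find a single parameter $\a'\in\mathcal{R}\cap\mathcal{P}_n$ with $f^n_{\a'}(\omega(\a'))\in P^{\a'}_n$. This $\a'$ is allowed to depend on $\omega$.
\item For any other $\a\in\mathcal{R}\cap\mathcal{P}_n$, the conjugacy $\xi$ with $\xi\circ f_{\a'}=f_\a\circ\xi$ sends internal and external rays to the rays of the same angles. Hence $\xi(\varGamma^{\a'}_n)=\varGamma^{\a}_n$, where $\varGamma^\a_n:=h^\a_n(\varGamma_n)$.
\item Since $\xi$ also carries the critical orbit of $f_{\a'}$ to that of $f_\a$, the point $f^n_\a(\omega(\a))$ lies in the component of $\C\setminus\varGamma^\a_n$ containing $P^\a_n$.
\item The equipotential arcs of $\partial P^\a_n$ are then handled by observing that, for $\a\in\mathcal{R}$, $\omega(\a)$ either lies in the Julia set or is the center of an attracting component.
\end{itemize}
This gives the conclusion on all of $\mathcal{P}_n\cap\mathcal{R}$, independently of $\omega$ and of how many iterates are involved. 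Continuity at $\b$ alone cannot deliver that.
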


\begin{proof}
    The first assertion follows directly from Lemma \ref{stablerepelling} and the lifting property.
    We only prove the second assertion.
    Suppose that $f^n_{\b}(\omega(\b)) \in P_n$ for some $\omega(\b) \in \crit_{m,k}(\b)$ with $m\geq 0$, $n\geq 0$ and $k \geq 2$.
    By the holomorphic motion of $\partial P_n$ and the fact that $ f^n_\a(\omega(\a)) \to f_{\b}^n(\omega(\b))$ as $\a \to \b$ on $\mathcal{R}$, there exists a neighborhood $\mathcal{U}\subset \mathcal{P}_n$ such that $f^n_\a(\omega(\a)) \in P^\a_n$ for every $\a \in \mathcal{U}\cap \mathcal{R}$.
    Fix $\a' \in \mathcal{U} \cap \mathcal{R}$, for any $\a \in \mathcal{R}\cap \mathcal{P}_n$, 
    by Lemma \ref{stretch}, there exists a quasiconformal conjugation $\xi$ such that $\xi \circ f_{\a'}=f_\a \circ \xi$.
    Denote $\varGamma_n^\a:=h_n^\a(\varGamma_n)$.
    Moreover, $\xi$ preserves external rays and internal rays.
    It follows that $\xi(\varGamma_n^{\a'})=\varGamma_n^\a$.
    Hence $f_\a^n(\omega(\a))$ belongs to the connected component of $\C \setminus\varGamma_n^\a$ containing $P_n^\a$.
    Notice that for $\a \in \mathcal{R}\cap \mathcal{P}_n$, $\omega(\a)$ either belongs to the Julia set or is the center of some attracting component.
    We conclude that $f^n_\a(\omega(\a)) \in P^\a_n$  for every $\a \in \mathcal{P}_n\cap \mathcal{R}$.

\end{proof}




\begin{lem}[no indifferent cycle]\label{noindifferent}
    $f_{\b}$ do not possesses any indifferent cycle.
\end{lem}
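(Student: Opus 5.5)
The plan is to argue by contradiction. Suppose $f_{\b}$ has an indifferent cycle $\mathcal{O}$, and use the classical principle that every non-repelling cycle must "consume" a critical point. Concretely: a parabolic cycle, or a Siegel disk cycle, attracts or is accumulated by the infinite forward orbit of some critical point lying in its basin, respectively in the closure of the postcritical set. For Cremer cycles, and for boundaries of Siegel disks, Mañé's theorem gives a recurrent critical point $\omega$ whose $\omega$-limit set contains $\mathcal{O}$. The proof then reduces to going through the list of critical points of $f_{\b}$, namely $0$, $c(\b)$, and $c_k(\b)$ for $k\in I_F\cup I_\infty$, and showing that none of them can play this role.

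\textbf{Step 1: cycles along $\mathcal{R}$.} First I will record that along $\mathcal{R}$ every cycle of $f_\a$ is either repelling or superattracting. By Lemma \ref{centralize}, the only Fatou critical point with infinite orbit is $c$, and it is attracted to the superattracting cycle of $0$. The maps $f_\a$, $\a\in\mathcal{R}$, are all qc conjugate by Lemma \ref{stretch}. Hence $\mathcal{O}$ is a limit of repelling cycles of $f_{\a_n}$, and the superattracting relations defining $\AA$ persist at $\b$.

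\textbf{Step 2: the finite-orbit and free critical points.} The critical points $0$ and $c_k(\b)$ with $k\in I_F$ have finite orbits, since $\b\in\AA$. Each such point is either superattracting periodic or strictly preperiodic, hence non-recurrent. So it can neither lie in a parabolic or Siegel basin with infinite orbit nor serve as the recurrent critical point in Mañé's theorem. For $c(\b)$, Lemma \ref{free} gives $v(\b)=\gamma(T)$ on the Jordan curve $\partial B_{\b}$, where $\gamma$ is the parametrization of Theorem \ref{roeschyin}, and its forward orbit stays in $\partial B_{\b}$. Thus $c(\b)$ is in the Julia set and is not attracted to any parabolic cycle. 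Moreover, its $\omega$-limit set lies in $\partial B_{\b}$, which is conjugate via $\gamma$ to $\tau_D$ on $\RZ$. The remaining task for $c(\b)$ is to show that no periodic point of $\partial B_{\b}$ is indifferent. Such a point has rational $\gamma$-angle. I would show it is the landing point of an internal ray with $s^0_{\b}=1$ and is repelling, using Lemma \ref{land} together with the fact (from Lemma \ref{attracting} and the argument below) that no critical point lies in a parabolic basin touching $\partial B_{\b}$.

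\textbf{Step 3: the Julia critical points $c_k(\b)$, $k\in I_\infty$ (main obstacle).} Lemma \ref{attracting} already excludes these from attracting basins. The difficulty is to prevent them from falling into a parabolic or Siegel basin, and from accumulating recurrently on a Cremer point or Siegel boundary. Here I would use the puzzles of Subsection \ref{subsec42} and Fact \ref{fact2}. Let $z\in\mathcal{O}$. Its orbit avoids $\varGamma(t^*)$, since that graph lands only at repelling points, so it has a nest $P_n(z)$. Suppose a critical point $\omega(\b)\in\crit_{m,k}(\b)$ returns infinitely often to $P_n(z)$. Then I would select nested critical pieces $P_{n+N}\subset P_n$ with $f^N_{\b}:P_{n+N}\to P_n$ proper. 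After thickening, and combining the first-return maps along the critical orbit, this yields a generalized polynomial-like map in the sense of Proposition \ref{straightening}. By Fact \ref{fact2}, the same combinatorial pieces and the same degrees persist for $\a\in\mathcal{P}_n\cap\mathcal{R}$, giving generalized polynomial-like maps for $f_{\a_n}$ that converge to the one for $f_{\b}$. Their straightenings are polynomials $g_{\a_n}\to g_{\b}$. By Step 1 and qc invariance along $\mathcal{R}$, all $g_{\a_n}$ are qc conjugate to a single map with no indifferent cycles. Their critical orbits have the same (Julia, non-escaping) combinatorics, and the limit keeps this combinatorics by Fact \ref{fact2}. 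I expect to conclude, via a lower-degree induction or the rigidity of the straightened family, that $g_{\b}$ has no indifferent cycle, which contradicts $z\in K$ of the renormalization.

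If the critical orbit is not recurrent to the nest of $z$, then $\mathcal{O}$ is not in its $\omega$-limit set, and Mañé's theorem excludes the Cremer and Siegel cases. In the parabolic case, $\omega(\b)$ would lie in an open Fatou basin, hence in the interior of some puzzle piece, for all $\a$ near $\b$. I would then use Fact \ref{fact2} to transport this to $f_{\a_n}$, where $\omega(\a_n)$ lies in the Julia set; the contradiction should come from the persistence of the parabolic petal under perturbation. Of the whole argument, I expect this final renormalization step in the recurrent case to be the main obstacle.
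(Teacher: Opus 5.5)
Your Steps 1--2 are a workable, if roundabout, reduction. The argument breaks at the end of Step 3, which is the one step carrying the content of the lemma. There you obtain renormalizations of $f_{\a_n}$ converging to one of $f_{\b}$. You observe that the straightened maps $g_{\a_n}$ all lie in one qc class without indifferent cycles, and then only \emph{expect} that the limit $g_{\b}$ has none. That does not follow. A limit of maps from a single qc class can perfectly well have an indifferent cycle; this is what happens at boundary points of hyperbolic components, and $\b$ is a boundary point of the qc class of $\a_0$. Moreover, the conjugacies $\xi_s$ of Lemma~\ref{stretch} have dilatation tending to infinity as $s\to 1$, because $\nu(s)\to 0$ and the stretch $\ell_s$ degenerates. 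So they give nothing in the limit.

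Your fallback for the parabolic case, ``persistence of the parabolic petal under perturbation,'' is also false. Petals do not persist (parabolic implosion), and maps with Julia critical points do accumulate on parabolic maps; for example, Misiurewicz parameters accumulate at $c=1/4$. That case is not separate from the recurrent one anyway, since a critical orbit in the parabolic basin of $\mathcal{O}$ enters $P_n(z)$ infinitely often.

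The missing idea is that the renormalizations along $\mathcal{R}$ are \emph{hybrid} equivalent, not merely qc equivalent. The maps $\xi_s$ are conformal off the basin of $0$. The filled Julia set of a puzzle renormalization is disjoint from that basin, because puzzle pieces omit a neighbourhood of the superattracting cycle. So once connectedness of the filled Julia sets is obtained from Fact~\ref{fact2}, uniqueness of straightening shows that the straightened polynomial is one and the same $g$ for all large $n$. Since $\partial U_{\a}\cup\partial V_{\a}$ move holomorphically, the moduli are bounded below. Hence the straightening maps $\eta_n$ have uniformly bounded dilatation and subconverge to a qc map conjugating $f_{\b}^q$ near $K(\beta)$ to this same $g$. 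This $g$ has no indifferent cycle, because $f_{\a_m}$ has none.

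This is the paper's proof, and it makes your Fatou--Shishikura/Ma\~n\'e case analysis unnecessary. The paper renormalizes directly around the indifferent point $\beta$ via $f_{\b}^q: P_{m+q}(\beta)\to P_m(\beta)$. It only needs to keep $c(\b)$ out of the nest (Lemma~\ref{free}) and to note that degree $1$ would force $\beta$ to be repelling.
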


\begin{proof}
    We show that $f_{\b}$ has no indifferent cycle by contradiction.
    Suppose that $f_{\b}$ admits an indifferent periodic point $\beta$ of period $q_0$.
    Pick a multiple $q$ of $q_0$ such that $P_q(\beta)\Subset P_0(\beta)$.
    It follows that for every $n$, we have $P_{n+q}(\beta)\Subset P_n(\beta)$, and $f_{\b}^q: P_{n+q}(\beta) \to P_n(\beta)$ is a polynomial-like map of some degree $D\geq 1$.
    If $D=1$, then $\beta$ must be a repelling periodic point which is a contradiction.

    By Lemma \ref{free}, we have $K(c(\b))=L_x=\{  c(\b)\}$ is a singleton impression.
    It follows that for $m$ large, we have $P_m(\beta)\cap \crit_{q,c}(\b)=\emptyset$.
    Pick $m$ large such that $P_{m}(\beta) \cap \crit_q(\b) = K(\beta) \cap  \crit_q(\b)\subset \bigcup_{k\geq 2} \crit_{q,k}(\b)$.
    Let $U:=P_{m+q}(\beta)$, $V:=P_m(\beta)$.
    Then the polynomial-like map $\f:=(f_{\b}^q,U,V)$ is a polynomial-like map of degree $D \geq 2$ with the filled-in Julia set $K(\f)=K(\beta)$.

    By Fact \ref{fact2}, there exists a neighborhood $\mathcal{U}:=\mathcal{P}_{m+q}$ such that $U_\a$ and $V_\a$ are well-defined, and $\f_\a:=(f_\a^q, U_\a ,V_\a )$ also forms a polynomial-like map of degree $D$ for $\a \in \mathcal{U}$.
    Recall that $\{ \a_n\}\subset \mathcal{R}$ is a sequence convergent to $\b$.
    We may assume that $\a_n \in \mathcal{U}$ for $n\geq m$.
    We claim that every $\f_{\a_n}$ with $n \geq m$ has connected filled-in Julia set $K_n$.
    For every critical point $\omega(\a_n) \in \mathrm{Crit}_q(\a_n) \cap U_{\a_n} $,
    by Fact \ref{fact}, we see that $f_{\a_n}^{iq}(\omega(\a_n)) \in U_{\a_n}$ for every $i$ and $n \geq m$.
    Thus, the filled-in Julia set $K_n$ is also connected for $n \geq m$.

     By the Douady-Hubbard Straightening Theorem, for every $n \geq m$, there exist a quasiconformal map $\eta_n: U_{\a_n} \to \C$ and a polynomial $g_n$ such that $f_{\a_n}^q$ is hybrid conjugated to a polynomial $g_n$ by $\eta_n$ up to an affine conjugation.
     By Lemma \ref{stretch}, there exists a quasiconformal conjugation $\zeta_n: \C \to \C$ between $f_{\a_{m}}$ and $f_{\a_n}$.

     \begin{clm}\label{clmhybrid}
         $\zeta_n : K_{m} \to K_n$ forms a hybrid equivalence map from $\f_{\a_m}$ to $\f_{\a_n}$.
     \end{clm}

     \begin{proof}[Proof of Claim \ref{clmhybrid}]
     It suffices to show that $\zeta_n(K_{m})=K_n$.
     For every $z \in K_{m}$, we have $f_{\a_{m}}^{q\ell}(z) \in U_{m}$ for any $m$.
     Denote $z':=\zeta_n(z)$, then by the conjugation property, we have $f_{\a_n}^{q\ell}(z')=\zeta_n\circ f_{\a_{m}}^{q\ell}(z) \in \zeta_n(U_{m}) $ for every $\ell$. 
     Notice that $\zeta_n$ preserves all external rays and internal rays in the attracting basin of $0$, then the symmetric difference between $\zeta_n(U_{m})$ and $U_n$ are contained in the attracting basin of $0$.
     It follows that $f_{\a_n}^{mq}(z') \in U_n $ for every $m$.      
     Hence $z' \in K_n$.
     This shows that $\zeta_n(K_{m})\subset K_n$.
    The proof of the converse is the same and hence omitted.
     \end{proof}

     Thus, the composition $\eta_{m}\circ\zeta_n^{-1}$ forms a hybrid equivalence map between $f_{\a_n}^q$ and $g_{m}$.
    By the uniqueness of the renormalized maps, we may assume that $g_{n}=g_{m}:=g$.
    Moreover, since $\partial V_\a \cup \partial U_\a$ moves holomorphically for $\a \in \mathcal{U}$, $\mathrm{mod}(V_\a\setminus \overline{U_\a})$ is uniformly bounded below for $\a \in \mathcal{U}$.
    It follows that the dilatation of $\eta_n$ have a uniform bound for $n \geq m$.
    Hence, we may assume that $\eta_n$ converges to a quasiconformal map $\eta$.
    It forms a hybrid conjugation between $f_{\b}|_{K(\beta)}$ and $g$.
    It follows that $f_{\b}^{q}|_{K(\beta)}$ and $f_{\a_{m}}^q|_{K_{m}}$ are hybrid conjugated.
    Since $f_{\b}$ has an indifferent cycle, we conclude that $f_{\a_{m}}$ also has an indifferent cycle.
    This is a contradiction. 

    \end{proof}

   Now we can conclude the dynamics of the limit map $f_\b$ in the following.

\begin{prop}[dynamics of $f_{\b}$]\label{limit}
    $f_{\b}$ has no indifferent cycle.
    The critical points $c(\b)$ and $c_k(\b) : k \in I_\infty$ are in the Julia set with infinite forward orbit.
\end{prop}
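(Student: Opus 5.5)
The first assertion is exactly Lemma \ref{noindifferent}, so only the statements about the critical points $c(\b)$ and $c_k(\b)$, $k\in I_\infty$, need an argument. The plan is to combine Lemma \ref{attracting}, Lemma \ref{noindifferent} and Lemma \ref{free} with the classification of Fatou components (Sullivan) and the fact that every Siegel disk or Herman ring needs an indifferent cycle or a recurrent critical orbit in its boundary. Since $K_\b$ is connected (it is a limit of connected filled-in Julia sets in $\mathcal{C}_d$), every bounded Fatou component of $f_\b$ is eventually periodic. By Lemma \ref{noindifferent} there are no parabolic basins and no Siegel disks, and polynomials have no Herman rings. So every bounded Fatou component is eventually mapped into an attracting cycle.

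\textbf{Step 1: these critical points lie in $J_\b$.} Let $\omega(\b)\in\{c(\b)\}\cup\{c_k(\b):k\in I_\infty\}$. If $\omega(\b)$ were in a Fatou component, that component would be preperiodic to an attracting cycle, so $\omega(\b)$ would be attracted to an attracting cycle. This contradicts Lemma \ref{attracting}, whose proof shows precisely that such a critical point cannot be attracted by any attracting cycle. Hence $\omega(\b)\in J_\b$.

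\textbf{Step 2: infinite orbit of $c(\b)$.} By Lemma \ref{free}, $v(\b)=f_\b^\ell(c(\b))$ is the landing point of $\RRR_\b(t_1)$, and its limb $L_{v(\b)}$ is the singleton $\{v(\b)\}$. By conjugacy of $f_\b^p|_{\partial B_\b}$, the points $f_\b^{np}(v(\b))$ are the landing points of $\RRR_\b(\tau_{d_0}^n(t_1))$. The angle $t_1$ is irrational, so its $\tau_{d_0}$-orbit is infinite. Distinct internal angles land at distinct points of the Jordan curve $\partial B_\b$ (Theorem \ref{roeschyin} and Claim \ref{clmcontinuity}: each limb root corresponds to a single angle when $X_\I$ shrinks). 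Therefore the orbit of $v(\b)$ is infinite. The one point needing care is that the parameterization $\gamma$ may differ from internal angles when $D>d_0$. I would handle it through Lemma \ref{land}: distinct angles in the invariant Cantor set determine disjoint limbs.

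\textbf{Step 3: infinite orbit of $c_k(\b)$, $k\in I_\infty$.} This is the main obstacle. Suppose $c_k(\b)$ is preperiodic. Since it lies in $J_\b$ and there are no indifferent cycles, it eventually lands on a repelling periodic point $\beta$. I would use the puzzles of Subsection \ref{subsec42} together with Fact \ref{fact2}. Because $\beta$ is repelling, the same degree argument as in Lemma \ref{noindifferent} gives a nested puzzle $P_{n+q}(\beta)\Subset P_n(\beta)$ with $f_\b^q$ of degree $D$. If $D=1$, the impression is $\{\beta\}$. Then $f_\b^j(c_k(\b))=\beta$, and Fact \ref{fact2} forces $f_{\a_n}^j(c_k(\a_n))\in P^{\a_n}_N(\beta)$ for all depths $N$, along the parameters $\a_n$ of $\mathcal{R}$. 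But on $\mathcal{R}$ all maps are qc-conjugate by Lemma \ref{stretch}, preserving the puzzles. So for a fixed $\a_{n}$, the point $f_{\a_n}^j(c_k(\a_n))$ lies in every puzzle piece around the repelling point $\beta(\a_n)$, and hence equals it. This would make $c_k(\a_0)$ preperiodic, contradicting $k\in I_\infty$.

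If $D\geq2$, I would run the hybrid-conjugacy argument of Lemma \ref{noindifferent}: the renormalizations $\f_{\a_n}$ are all hybrid equivalent to a fixed $g$, and in the limit $f_\b^q|_{K(\beta)}$ is hybrid conjugate to $f_{\a_m}^q|_{K_m}$. Such a conjugacy matches critical orbits, so $c_k(\a_m)$ would be preperiodic, again a contradiction. A backup for the case where puzzle pieces around $\beta$ are not well defined (for instance $\beta$ on $\varGamma(t^*)$) is to change $t^*$ so that the orbit of $\beta$ avoids $\varGamma(t^*)$.
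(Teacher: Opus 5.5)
Your plan follows the paper's mechanism. Step 1 spells out what the paper leaves implicit when it cites Lemmas \ref{attracting} and \ref{noindifferent}. Step 3 uses the same puzzles, Fact \ref{fact2}, the conjugacies of Lemma \ref{stretch} and a limit of straightenings. Your $D=1$ shrinking-puzzle argument is fine. The $D\ge 2$ case, however, breaks at ``such a conjugacy matches critical orbits.''

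Since $c_k(\b)\in J_\b$ has finite orbit, it is strictly preperiodic. The point you must control, $f^j_\b(c_k(\b))=\beta$, is a repelling fixed point of $f^q_\b$, not a critical point of the renormalization $f^q_\b\colon P_{n+q}(\beta)\to P_n(\beta)$; and $c_k$ itself is in general not in $K(\beta)$. A hybrid conjugacy between $f^q_\b|_{K(\beta)}$ and $f^q_{\a_m}|_{K_m}$ only has to respect the critical points of these renormalizations. It sends $\beta$ to \emph{some} fixed point of $f^q_{\a_m}$ in $K_m$. Its existence is perfectly compatible with $f^j_{\a_m}(c_k(\a_m))$ being a non-preperiodic point of $K_m$, which it is, since $k\in I_\infty$. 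So no contradiction follows.

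What is missing is a marking of the postcritical point that survives the limit. The uniformly quasiconformal straightenings $\eta_n$ must send $f^j_{\a_n}(c_k(\a_n))$ (which lies in $K_n$ by Fact \ref{fact2}) to a point $w$ independent of $n$. Then $\eta(\beta)=w$ and $g(w)=w$, and injectivity of $\eta_m$ gives $f^{j+q}_{\a_m}(c_k(\a_m))=f^j_{\a_m}(c_k(\a_m))$. You can arrange this by comparing $\eta_n$ on $K_n$ with the hybrid conjugacy $\eta_m\circ\zeta_n^{-1}$ of Lemma \ref{noindifferent}. By the Douady--Hubbard uniqueness argument the two differ there by one of the finitely many affine symmetries of $g$, so you may pass to a subsequence. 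You cannot simply take $\eta_n:=\eta_m\circ\zeta_n^{-1}$, because the dilatation of $\zeta_n$ on the basin of $0$ blows up as $s_n\to 1$, so the equicontinuity needed for the limit is lost.

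The paper avoids the marking problem by a different device. It uses the generalized polynomial-like map $(\f_1,\f_2)$ with $\f_2=(f^m_\b,P_{n+q+m}(c_k(\b)),P_{n+m}(c_k(\b)))$, so that $c_k$ itself is a critical point of the second component. Then $\iota_n(c_k(\a_n))$ is one of the finitely many critical points of $g_2$, hence constant along a subsequence, and the infinite set $\{g_1^j(g_2(\omega))\}$ passes to the limit. This is why Proposition \ref{straightening} is introduced. It also treats $d_1=1$ and $d_1\ge 2$ uniformly, so your $D=1$/$D\ge 2$ split is unnecessary. Your route, once the marking is supplied, needs only the classical straightening theorem.

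Separately, in Step 2, Claim \ref{clmcontinuity} only shows that each non-crashing internal angle has a single landing point, not that distinct angles land at distinct points. The injectivity along the $\tau_{d_0}$-orbit of $t_1$ therefore needs its own argument, although the paper also treats the infinite orbit of $c(\b)$ as immediate from Lemma \ref{free}.
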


\begin{proof}

    By Lemma \ref{attracting}, Lemma \ref{free} and Lemma \ref{noindifferent}, it remains to show that every $c_k(\b)$ for $k \in I_\infty$ has infinite forward orbit.
    We prove it by contradiction, suppose that $z(\b):=f_{\b}^m(c_k(\b))$ is a periodic point of period $q$.
    Pick an $n$, denote $V_1:=P_n(z(\b))$, $U_1:=P_{n+q}(z(\b))$, and $U_2:=P_{n+q+m}(c_k(\b))$, $V_2:=P_{n+m}(c_k(\b))$.
    Denote $\f_1:=(f_\b^q,U_1,V_1)$ and $\f_2:=(f_\b^m,U_2,V_2)$.
    Then $(\f_1,\f_2)$ forms a generalized polynomial-like map of some degree $(d_1,d_2)$ with $d_1\geq 1$, $d_2\geq 2$ with filled-in Julia set $(K_1,K_2)$.
    Similar to the proof of Lemma \ref{noindifferent}, we can show that 
    \begin{itemize}
        \item $K_1 \cap \crit_q(\b) \subset \bigcup_{k\geq 2}\crit_{q,k}(\b)$ and $K_2 \cap \crit_m(\b) \subset \bigcup_{k\geq 2}\crit_{m,k}(\b) $.
        \item $(\f_1,\f_2)$ has connected filled-in Julia set $(K_1,K_2)$.
    \end{itemize}
    By Fact \ref{fact2}, there exists a neighborhood $\mathcal{U}:=\mathcal{P}_{n+q+m}$ such that $U_1^\a,V_1^\a,U_2^\a,V_2^\a$ are defined for $\a \in \mathcal{U}$ and  $(\f_{\a,1},\f_{\a,2})$ also forms a generalized polynomial-like map of some degree $(d_1,d_2)$ where $\f_{\a,1}:=(f_\a^q,U_1^\a,V_1^\a)$ and $\f_{\a,2}:=(f_\a^m,U_2^\a,V_2^\a)$.
    Similar to the proof of Lemma \ref{noindifferent}, by Fact \ref{fact2}, we can show that $(\f_{\a,1},\f_{\a,2})$ also has connected filled-in Julia set.
    Assume that $\a_n \in \mathcal{U}$ for $n\geq N$.
    Then for $n\geq N$, the filled-in Julia set of $(\f_{\a_n,1},\f_{\a_n,2})$ is denoted by $(K_1^{n},K_2^{n})$.
    Thus, by Proposition \ref{straightening}, 
    $(\f_{\a_n,1},\f_{\a_n,2})$ is hybrid conjugated to $(g_{n,1},g_{n,2})$ by some hybrid maps $(\eta_n,\iota_n)$. 
    On the other hand, by Lemma \ref{stretch}, since $f_{\a_N}$ and $f_{\a_n}$ is quasiformally conjugated, we may assume that $(g_{n,1},g_{n,2})=(g_{N,1},g_{N,2})=(g_1,g_2)$ is a constant.
    Thus, we have 
    \begin{equation*}
        \eta_n \circ f_{\a_n}^q = g_1 \circ \eta_n, \quad \eta_n \circ f_{\a_n}^m = g_2 \circ \iota_n .
    \end{equation*}
    Therefore, on $K_2^n$, we have
    \begin{equation}\label{eqorbit}
        \eta_n \circ f_{\a_n}^{jq+m}= g_1^j\circ g_2 \circ \iota_n  \forall j \geq 0
    \end{equation}
    Since $ \{ f_{\a_n}^{jq+m}(c_k(\a_n)) : j \geq 0 \} $ is an infinite set, then $\{ g_1^j \circ g_2(\omega_n) : j\geq 0 \}$ is also an infinite set where $\omega_n:=\eta_n(c_k(\a_n))$ is a critical point of $g_2$.
    Hence we may assume that $\omega_n=\omega$ is a constant.
    Since $\mathrm{mod}(V_1^{\a_n} \setminus \overline{U_1^{\a_n}})$ is uniformly bounded below, we may assume that $(\eta_n,\iota_n)$ converges to $(\eta,\iota)$ where $\eta,\iota$ are quasiconformal maps.
    By taking limit in \eqref{eqorbit}, it follows that 
    $$
         f_{\b}^{jq+m}(c_k(\b))= \eta^{-1} \circ g_1^j \circ g_2 \circ \iota(\omega),\quad \forall j \geq 0
    $$
    forms an infinite set.
    It contradicts the assumption that $c_k(\b)$ has finite orbit.


\end{proof}

\subsection{Rational lamination}

In this part, we prove our main result Theorem \ref{main}.

\begin{cor}[local holomorphic motion]\label{localholo}
    For any $\theta \in \Q/\Z$, there exists a neighborhood $\mathcal{U}_\theta \subset \AA$ of $\b$ such that $s_\a(\theta)=1$ for every $\a \in \mathcal{U}_\theta$.
    Thus, there exists a holomorphic motion $h: \mathcal{U}_\theta \times \overline{R_{\b}(\theta)} \to \C$ such that $h(\a, \overline{R_{\b}(\theta)}) = \overline{R_\a(\theta)}$.
\end{cor}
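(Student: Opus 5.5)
The plan is to reduce the statement to Lemma \ref{stablerepelling} and Lemma \ref{rayhm}, using Proposition \ref{limit} to control where the rational rays of $f_{\b}$ land.

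\textbf{Landing point of $R_{\b}(\theta)$.} Since $f_{\b}$ has connected Julia set (as a limit of connected ones, $\b \in \ca$) and, by Proposition \ref{limit}, no indifferent cycles, every rational external ray $R_{\b}(\theta)$ has $s_{\b}(\theta)=1$. It lands at a preperiodic point $z_\theta$, and this point is repelling, since a parabolic landing point is excluded.

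\textbf{The forward orbit of $z_\theta$ avoids critical points.} The orbit of $z_\theta$ is finite and lies in the Julia set. By Lemma \ref{attracting} and Proposition \ref{limit}, every critical point of $f_{\b}$ falls into one of three types:
\begin{enumerate}
\item $0$ and $c_k(\b)$ with $k\in I_F$ that are attracted to attracting cycles. These lie in the Fatou set, so they cannot meet the orbit of $z_\theta$.
\item $c(\b)$ and $c_k(\b)$ with $k\in I_\infty$. These have infinite forward orbit, so they cannot lie on the finite (preperiodic) orbit of $z_\theta$.
\item $c_k(\b)$ with $k\in I_F$ that are strictly preperiodic into the Julia set rather than attracted.
\end{enumerate}

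\textbf{The third type.} This is the delicate point. For $\a$ on the stretching ray $\mathcal{R}$, such $c_k(\a)$ are Julia critical points with finite orbit, because $\mathcal{R}\subset \qc(\a_0)$ and the relations defining $\AA$ hold. For $f_{\a_0}$ these critical points are not landing points of rays in the relevant orbit sense only if we argue separately. Hence I would handle this case differently, without relying on critical avoidance.

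\textbf{Stability along $\mathcal{R}$.} Along $\mathcal{R}$ the critical relation $f^{m_k}(c_k)=f^{n_k}(c_k)$ is preserved in all of $\AA$, so a preperiodic point that is the image of $c_k$ moves holomorphically as a repelling point on $\AA$ near $\b$. Pulling back the repelling cycle in the family $\AA$, where the critical relation is fixed, gives a holomorphic motion of the finite set of iterated preimages. This is the analogue of Lemma \ref{stablerepelling} on the sub-manifold $\AA$.

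\textbf{Obtaining $s_\a(\theta)=1$.} The standard argument of \cite{douady1984etude} then gives $s_\a(\theta)=1$ for $\a$ in a neighborhood $\mathcal{U}_\theta$, since $R_\a(\theta)$ cannot bifurcate. To see this, note that a ray crashes only into iterated preimages of escaping critical points. Near its landing point the ray is controlled by linearization at the repelling cycle. On the compact part $\{|\varphi|\ge 1+\epsilon\}$, escaping critical points have escape rate near $0$, because $\b$ has connected Julia set; so they stay below the finitely many ray pieces needed, by continuity of the Green function.

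\textbf{Holomorphic motion.} Once $s_\a(\theta)=1$ on $\mathcal{U}_\theta$, Lemma \ref{rayhm} gives the holomorphic motion $\psi_\a\circ\varphi_{\b}$ of $R_{\b}(\theta)$. Setting $h(\a,z_\theta)=z_\theta(\a)$, the continuation of the repelling landing point, extends the motion to the closure. Injectivity holds because $z_\theta(\a)\in J_\a$ is disjoint from $R_\a(\theta)$, or one can invoke the $\lambda$-Lemma directly.

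\textbf{Main obstacle.} The expected main obstacle is the critical-avoidance case above, namely a Julia critical point $c_k$ with $k\in I_F$ lying on the orbit of $z_\theta$. I would first try to rely on working inside $\AA$, where that critical relation is frozen, to get persistence of the landing. Failing that, I would shrink to sub-rays and use pullbacks by the local branches near the critical point, which are determined within $\AA$.
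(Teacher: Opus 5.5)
Your proposal has a genuine gap, at exactly the point you flag as the main obstacle. Suppose the forward orbit of $z_\theta$ passes through a Julia critical point $c_k(\b)$ with $k\in I_F$. This is your third type, and it does occur when such a $c_k$ is strictly preperiodic to a repelling cycle. In that case the step ``Obtaining $s_\a(\theta)=1$'' is not proved.

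The Green-function bound only rules out crashes of $R_\a(\theta)$ at potential bounded below. Linearization controls only the tail of the \emph{periodic} ray $R_\a(\theta_m)$, where $\theta_m=\tau_d^m(\theta)$. The low-potential part of $R_\a(\theta)$ is the pull-back of that tail by $f_\a^m$ through a critical point, and you never show that nothing obstructs this pull-back. The paragraph ``Stability along $\mathcal{R}$'', even if granted, only continues the landing point $z_\theta$. Persistence of the landing point says nothing about whether the ray crashes into a precritical point before reaching it. Your closing paragraph then lists strategies to try rather than giving an argument, and your holomorphic-motion step depends on $s_\a(\theta)=1$.

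The missing idea is that critical avoidance is not needed at all. In $\AA$ the points $0$ and $c_k(\a)$, $k\in I_F$, have finite forward orbit for \emph{every} parameter. So they lie in $K_\a$ and can never be the critical point on which a ray crashes, and your third type is harmless. Only $c$ and $c_k$, $k\in I_\infty$, can escape, and the paper rules them out by a limiting argument:
\begin{enumerate}
\item By Lemma \ref{stablerepelling}, $s_\a(\theta_m)=1$ for $\a$ near $\b$, since the landing point of $R_{\b}(\theta_m)$ is repelling periodic.
\item Suppose $s_{\a'_n}(\theta)>1$ along a sequence $\a'_n\to\b$. Then the crash point $w_n$ satisfies $f^m_{\a'_n}(w_n)=f^\ell_{\a'_n}(\omega(\a'_n))\in R_{\a'_n}(\theta_m)$ for some escaping critical point $\omega$ and some $1\le\ell\le m$. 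Both are fixed after passing to a subsequence.
\item Passing to the limit, $f^\ell_{\b}(\omega(\b))\in\overline{R_{\b}(\theta_m)}\cap K_{\b}$, so it is the repelling periodic landing point. Hence $\omega(\b)$ has finite orbit.
\item This contradicts Proposition \ref{limit} when $\omega\in\{c\}\cup\{c_k:k\in I_\infty\}$, and the remaining critical points cannot escape.
\end{enumerate}
Your local pull-back route can also be completed with the same observation. The escaping critical points of $f_\a$ near $\b$ are close to $c(\b)$ and $c_k(\b)$, $k\in I_\infty$, which by Proposition \ref{limit} stay at positive distance from the finite orbit of $z_\theta$. Every other critical point near that orbit never escapes. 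Once $s_\a(\theta)=1$ on $\mathcal{U}_\theta$, your last step via Lemma \ref{rayhm} and the $\lambda$-Lemma finishes the proof. The separate continuation of $z_\theta$ then becomes unnecessary.
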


\begin{proof}
    For every $\theta \in \Q/\Z$, the external ray $R_\b(\theta)$ lands at a parabolic or repelling preperiodic point $z_0$.
    By Lemma \ref{noindifferent}, $z_0$ must be repelling. 
    There exists a minimal $m$ such that  $\theta_m:=\tau_d^m(\theta)$ is periodic.
    By Lemma \ref{stablerepelling}, there exists a neighborhood $\mathcal{U}$ of $\b$ such that $s_\a(\theta_m)=1$ for $\a \in \mathcal{U}$.
    Moreover, $R_\a(\theta_m)$ moves holomorphically for $\a \in \mathcal{U}$.
    We will show that there exists a neighborhood $\mathcal{U}_\theta$ of $\b$ such that $s_\a(\theta)=1$ for $\a \in \mathcal{U}_\theta$.    
    We prove it by contradiction.    
    Suppose that there exists $\a_n' \to \b$ in $\AA$ such that $s_{\a'_n}(\theta)>1$.
    Then for every $n$, $w_n:=\psi_{\a'_n}(s_{\a'_n}(\theta)e^{2\pi i \theta})$ is a precritical point.
    Since $s_{\a'_n}(\theta_m)=1$, we see that $f_{\a'_n}^m(w_n) \in R_{\a'_n}(\theta_m)$ is a post critical point.
    Thus, $f_{\a'_n}^m(w_n)$ must be $f_{\a'_n}^\ell(\omega(\a'_n))$ for some $1\leq \ell \leq m$ and $\omega(\a'_n) \in \crit(\a'_n)$.
    By taking a subsequence of $\a_n'$, we may assume that $\ell$ and $\omega$ are independent of $n$.
    By taking the limit, we see that $f_{\b}^\ell(\omega(\b)) \in \overline{R_\b(\theta_m)}$.
    Thus $ f_{\b}^\ell(\omega(\b)) $ must be the landing point of $R_\b(\theta_m)$.
    By Proposition \ref{limit}, we must have $\omega(\b)=c_k(\b)$ for some $k \in I_F$.
    But, for $\a \in \AA$, $c_k(\a)$ is in the Julia set or a center of some attracting component.
    It contradicts that $f_{\a_n'}^m(c_k(\a_n')) \in R_{\a_n'}(\theta_m)$.

\end{proof}

Using Corollary \ref{localholo}, we can finish the proof of our main result Theorem \ref{main}.

\begin{proof}[Proof of Theorem \ref{main}]
    We show that $\L_\Q(f)=\L_\Q(f_{\b})$.
    For any $(\theta,\theta') \in \L_\Q(f_{\b})$, $R_{\b}(\theta)$ and $R_{\b}(\theta')$ land at a common point.
    By Corollary \ref{localholo}, for $\a \in \mathcal{U}_{\theta} \cap \mathcal{U}_{\theta'}$, two external rays $R_\a(\theta)$ and $R_\a(\theta')$ also land at a common point.
    Since $\a_n \in \mathcal{U}_{\theta} \cap \mathcal{U}_{\theta'} $ for  large $n$, we have $(\theta,\theta') \in \L_\Q(f_{\a_n})$.
    Hence $ (\theta,\theta') \in \L_\Q(f) $.

    For any $(\theta,\theta') \in \L_\Q(f)$, we show that $(\theta,\theta') \in \L_\Q(f_{\b})$ by contradiction.   
    Suppose that $(\theta,\theta') \notin \L_\Q(f_{\b})$, then by Corollary \ref{localholo},  for $\a \in \mathcal{U}_{\theta} \cap \mathcal{U}_{\theta'}$, two external rays $R_\a(\theta)$ and $R_\a(\theta')$ do not land at a common point. 
    Hence for $n$ large, we have $(\theta,\theta') \notin \L_\Q(f_{\a_n})$.
    Then we get $ (\theta,\theta') \notin \L_\Q(f) $ which is a contradiction.

    By above discussion, we conclude that $\L_\Q(f)=\L_\Q(f_{\b})$.
    But $f$ and $f_{\b}$ cannot be conjugated since they have different number of Julia critical points.
    Hence $f$ is not combinatorially rigid.

    \end{proof}

\section{Combinatorial Rigidity for Hyperbolic Maps}\label{sec5}

In this section, we prove Theorem \ref{hyperbolic}.
Let $f$ be a hyperbolic map of disjoint type.
We will use the following two facts.

\begin{fact}[center]\label{pcf}
    There exists a hyperbolic post-critically finite map $f_0 \in \qc(f)$.
\end{fact}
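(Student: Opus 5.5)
We prove Fact \ref{pcf} by the same quasiconformal surgery used in Claim \ref{centralize2}: move every critical point of $f$ onto the attracting periodic orbit (or onto its preimages) without changing the dynamics near the Julia set. Since $f$ is hyperbolic, every critical point lies in the basin of some attracting cycle, and $J(f)$ avoids the closure of the postcritical set. In the disjoint case each attracting cycle attracts exactly one critical point (counted with multiplicity), but the argument below does not use this.

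\textbf{Periodic components.} Let $O$ be an attracting cycle of period $q$, with immediate basin components $B^0,\dots,B^{q-1}$, and let $D_j=\deg(f|_{B^j})$. For each $j$ we will do the following.
\begin{enumerate}
\item Choose Jordan domains $U_j\Subset V_j\Subset B^j$ with smooth boundaries such that:
   \begin{itemize}
   \item $V_j$ contains all critical points of $f$ in $B^j$ together with the relevant orbit points;
   \item $f:U_j\to f(U_j)$ and $f:V_j\to f(V_j)$ are proper of degree $D_j$;
   \item the $V_j$ are nested compatibly with the cycle.
   \end{itemize}
   Such domains exist because the postcritical set inside $B^j$ is compactly contained in $B^j$ and accumulates only at $O$.
\item Replace $f$ on $U_j$ by $\psi_{j+1}^{-1}\circ(\phi_j)^{D_j}$. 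Here $\phi_j$ and $\psi_{j+1}$ are Riemann maps of $U_j$ and $f(U_j)$ normalized to send the cycle point to $0$.
\item Interpolate on the annulus $V_j\setminus U_j$ by a quasiregular map of the same degree. This is possible because the two boundary coverings have the same degree.
\end{enumerate}

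\textbf{Strictly preperiodic components.} Treat them in order of decreasing distance to the cycle, exactly as in Claim \ref{centralize2}. On each such component, replace $f$ near its critical points by a model $z\mapsto z^{\delta}$ in Riemann coordinates, sending the unique critical point to the already chosen preperiodic point of the image component.

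Call the resulting quasiregular map $F$. It agrees with $f$ outside a finite union of compact subsets of the Fatou set.

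\textbf{Integrating the complex structure.} Define $\sigma$ to be $\sigma_0$ on the basin of $\infty$ and in small round neighborhoods of the cycle points (in the linearizing or Böttcher coordinates), and spread it by pullback under $F$. Every orbit passes through the non-holomorphic region at most once per component before landing in a region where $F$ is holomorphic with $\sigma$ invariant. Hence the dilatation of $\sigma$ stays bounded. The Measurable Riemann Mapping Theorem, with the normalization $\xi(z)/z\to1$ at $\infty$, gives a quasiconformal $\xi$ such that $f_0:=\xi\circ F\circ\xi^{-1}$ is a monic centered polynomial of degree $d$. In $\xi(B^j)$ this polynomial is conjugate to $z\mapsto z^{D_j}$ composed along the cycle, so every critical point of $f_0$ is eventually mapped onto a superattracting cycle. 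Thus $f_0$ is hyperbolic and postcritically finite.

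\textbf{Membership in $\qc(f)$.} $\xi$ is conformal on $\Omega_f$ and conjugates $f$ to $f_0$ there. Because of the normalization at $\infty$, $\varphi_{f_0}\circ\xi=\varphi_f$, so $\varphi_{f_0}^{-1}\circ\varphi_f$ extends to the quasiconformal map $\xi$. This map conjugates $f$ to $f_0$ on $J(f)$, since $F=f$ on a neighborhood of $J(f)$. Hence $f_0\in\qc(f)$.

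\textbf{Main obstacle.} The main difficulty is the uniform bound on the dilatation of $\sigma$, which requires that orbits cross the surgery regions only finitely many times. This follows from the choice of the $U_j,V_j$ inside the basins and from the invariance of $\sigma_0$ near the attracting cycles.
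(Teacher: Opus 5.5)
Your proposal is correct and follows essentially the paper's route. The paper just points back to the quasiconformal surgery of Lemma~\ref{centralize} (Claim~\ref{centralize2}), which replaces $f$ on each critical Fatou component by a model $\psi^{-1}\circ\phi^{\delta}$ and integrates the resulting invariant structure; you do the same surgery on all components at once with a single quasiregular map $F$ instead of one component at a time, and you also check explicitly that $\varphi_{f_0}\circ\xi=\varphi_f$. One small slip: the strictly preperiodic components must be handled in order of \emph{increasing} distance to the cycle (image component first), which is what your phrase ``already chosen preperiodic point of the image component'' actually requires.
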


The proof of Fact \ref{pcf} is implied in the proof of Lemma \ref{centralize}.
In fact, the map $f_0$ is the center of the hyperbolic component containing $f$.

The following well-known result concerning the rigidity of hyperbolic post-critically finite polynomials.

\begin{fact}[rigidity]\label{rigidity}
    Suppose that $f,g \in \P$ are hyperbolic post-critically finite polynomials such that $\L_\Q(f)=\L(g)$, then $f=g$.
\end{fact}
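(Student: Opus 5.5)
The rigidity statement concerns hyperbolic postcritically finite polynomials in $\P$ with $\L_\Q(f)=\L_\Q(g)$. I would deduce $f=g$ from Thurston's rigidity theorem, after building a Thurston equivalence between $f$ and $g$ out of the common rational lamination. The standard route goes through Hubbard trees. For a hyperbolic postcritically finite polynomial, every bounded Fatou component is a disk, and in it the internal rays landing on the boundary are well defined. The Julia set is locally connected, so the closed filled Julia set is the quotient of the closed disk by the real lamination. That real lamination is the closure of $\L_\Q$, because every repelling cycle is landed on by rational rays and hyperbolicity rules out indifferent cycles. With this in place, the rational lamination determines the abstract combinatorial model $\overline{\D}/\!\sim$ together with the induced action of $\tau_d$.

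The steps, in order, are as follows. First, show that $\L_\Q(f)$ determines the gaps of the lamination: each bounded Fatou component of $f$ corresponds to a gap of the lamination, and on its boundary $\tau_d$ acts as $z\mapsto z^{D}$ after a Carathéodory parametrization. Second, identify the postcritical set combinatorially. Each critical point of a hyperbolic postcritically finite polynomial is the center of a Fatou component, since all critical points lie in attracting basins and have finite orbits. So the critical set corresponds to critical gaps, meaning gaps on which $\tau_d$ has degree $>1$, together with the centers. The centers are pinned down by the normalization of internal rays: using the rational rays landing on the boundary, choose internal angle $0$ consistently along the cycle. Third, construct the Hubbard trees $H_f$ and $H_g$. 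Each is the union of regulated arcs joining the postcritical points, where inside Fatou components we use internal rays and at Julia branch points we use the landing points of rational rays. Show that $\L_\Q(f)=\L_\Q(g)$ gives an isomorphism of angled trees $H_f\to H_g$ that conjugates the dynamics on the vertices. Fourth, extend this isomorphism to a homeomorphism $\phi_0$ of the plane, using the cyclic order of the external angles at each vertex. Lift it to $\phi_1$ with $\phi_0\circ f=g\circ\phi_1$, and check that $\phi_1$ is isotopic to $\phi_0$ relative to the postcritical set, because both respect the tree and the same external angles. This gives a Thurston equivalence.

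Finally, I would apply Thurston's theorem. Polynomials have no Thurston obstructions (Levy cycles cannot occur for hyperbolic postcritically finite polynomials; alternatively, use the Douady--Hubbard uniqueness for polynomial trees). So $f$ and $g$ are affinely conjugate. The conjugacy must send $\varphi_f$ to $\varphi_g$, since both lie in $\P$ and share the same landing pattern of rays. It is therefore an affine map commuting with the normalization, that is, multiplication by a $(d-1)$-th root of unity that rotates the angles. Equality of the rational laminations forces this rotation to act trivially, so $f=g$.

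I expect the main obstacle to be the second and third steps. The difficulty is recovering the centers and the internal-angle markings, not just the Julia-set topology, from $\L_\Q$ alone. One has to check that the choice of internal ray normalization on periodic Fatou components is determined by which rational rays land on their boundaries, and that the resulting trees agree even when the lamination has nested gaps. I would first handle this by proving that points of the Julia set landed on by rational rays on $\partial U$ accumulate densely on $\partial U$, which fixes the boundary parametrization up to a rotation compatible with $\tau_d$.
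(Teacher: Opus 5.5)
The paper does not prove this fact; it is quoted as well known. So your argument has to stand on its own. The route through angled Hubbard trees and Thurston rigidity is a standard one, but your final step does not go through as written.

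\textbf{The gap.} An affine conjugacy between two maps of $\mathcal{P}_d$ is $A(z)=\zeta z$ with $\zeta=e^{2\pi i k/(d-1)}$. It satisfies $\varphi_g\circ A=\zeta\,\varphi_f$, so $A(R_f(\theta))=R_g(\theta+\tfrac{k}{d-1})$, and $\lambda_{\mathbb Q}(g)$ is the rotation of $\lambda_{\mathbb Q}(f)$ by $\tfrac{k}{d-1}$. Combined with $\lambda_{\mathbb Q}(f)=\lambda_{\mathbb Q}(g)$, this only says that $\lambda_{\mathbb Q}(f)$ is invariant under that rotation. That does not force $k=0$. The trivial lamination of $z^d$ is invariant under every such rotation. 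Likewise $z^3+\tfrac32 z$ has superattracting fixed points $\pm i/\sqrt2$ whose basins touch at $0$, where the rays of angles $0$ and $\tfrac12$ land; its lamination is invariant under $\theta\mapsto\theta+\tfrac12$. In that situation you would still have to prove that $f$ commutes with $z\mapsto\zeta z$, which is the statement itself for the pair $(f,\zeta f\zeta^{-1})$. Your sentence that the conjugacy ``must send $\varphi_f$ to $\varphi_g$ because both share the same landing pattern'' is exactly the unproved point: the landing pattern alone does not select $\zeta$.

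\textbf{How to repair it.} The missing ingredient is the isotopy information in Thurston's theorem, which you dropped when you wrote only that $f$ and $g$ are affinely conjugate. The conjugacy $A$ is isotopic to $\phi_0$ rel $P_f$, hence $A=\phi_0$ on $P_f$. Your $\phi_0$ sends the center of each postcritical Fatou component $U$ of $f$ to the center of the component of $g$ with the same angle set $\Theta_U$. Since $A$ rotates angles by $\tfrac{k}{d-1}$, every such $\Theta_U$ would then be invariant under that rotation.

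If the rotation is nontrivial, the convex hull of an invariant Fatou gap contains $0$ in its interior. Indeed, the orbit average of any point of the hull is $0$. Moreover $0$ cannot lie on a boundary leaf: that leaf would be a diameter, and the hull would lie in the intersection of the rotates of a half-plane, which has empty interior. Distinct Fatou gaps have disjoint interiors. So as soon as $f$ has two finite postcritical points, $k=0$ and $A=\mathrm{id}$.

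The remaining case is a single finite postcritical point. This forces $f=z^d$, and $|P_g|=|P_f|$ forces $g=z^d$ as well. This case must be treated separately anyway, since the uniqueness half of Thurston's theorem needs a hyperbolic orbifold and $z^d$ does not have one.

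\textbf{Smaller points.} Obstructions are irrelevant here, since $f$ and $g$ are already polynomials. Also, your claim that $\lambda_{\mathbb R}=\overline{\lambda_{\mathbb Q}}$ ``because every repelling cycle is landed on by rational rays'' is not an argument. What the tree construction actually needs is that each bounded Fatou component is cut off by rational leaves. That follows from Theorem \ref{roeschyin} together with the angle-expansion argument of Claim \ref{rational}.
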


Now we present the proof of Theorem \ref{hyperbolic}.

\begin{proof}[Proof of Theorem \ref{hyperbolic}]
    If $f$ is not of disjoint type, then there exists a attracting cycle which attract at least two critical points (counted with multiplicity).
    By Theorem \ref{main}, $f$ is not combinatorially rigid.

    In the following, we show that every hyperbolic polynomial $f \in \P$ of disjoint type is combinatorially rigid.
    Let $f$ be a hyperbolic map of disjoint type, we need to show that $\comb(f)\subset \qc(f)$.
    
    By Fact \ref{pcf}, we may assume that $f$ is post-critically finite.
    Then $f$ has $d-1$ periodic critical point $c_1,c_2,\dots,c_{d-1}$ with period $p_1,p_2,\dots,p_{d-1}$.
    For $1\leq k \leq d-1$, let $B_k$ denote the attracting component containing $c_k$.
    Let $\partial B_k^* \subset \partial B_k$ be the set of $x \in \partial B_k$ which is the root of a non-trivial limb $L_{k,x}$.
    By Theorem \ref{roeschyin}, for every $x \in \partial B_k^*$, there exists $(\theta_{k,x},\theta'_{k,x}) \in \L_\R(f)$ with $\theta_{k,x}\neq \theta_{k,x}'$ such that $R_f(\theta_{k,x})$ and $R_f(\theta'_{k,x})$ land at $x$ and separate the limb $L_{k,x}$ away from $B_k$.
    Let $\Omega_{k,x}$ be the connected of $\C \setminus (\overline{R_f(\theta_{k,x})} \cup \overline{R_f(\theta'_{k,x})})$ containing $B_k$.
    Then we have $L_{k,x} \cap \Omega_{k,x} =\emptyset$ and $\bigcap_{x \in \partial B_k^*}\overline{\Omega_{k,x}} \cap K(f)=\overline{B_k}$.
    Let $E_{k,x}\subset \RZ$ be the arc such that for $\theta \in E_{k,x}$, the external ray $R_f(\theta)$ belongs to $\Omega_{k,x}$.
    Let 
    $E_k := \bigcap_{x \in \partial B_k} \overline{E_{k,x}} .$

    \begin{clm}\label{rational}
       For every $1\leq k \leq d-1$ and $x \in \partial B_k^*$, we have  $(\theta_{k,x},\theta_{k,x}') \in \L_\Q(f)$.
    \end{clm}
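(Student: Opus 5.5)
The claim asks that the angles $\theta_{k,x},\theta'_{k,x}$ be rational. Since $(\theta_{k,x},\theta'_{k,x})$ already land together by construction, it suffices to show that both are rational. The plan is to show that the root $x$ of every nontrivial limb $L_{k,x}$ is eventually periodic and repelling. Then every ray landing at $x$ has rational angle, by the landing theory recalled in Section 2 (a point hit by an irrational-angle ray that is repelling preperiodic is impossible for connected $K$, since rays landing at repelling preperiodic points are finitely many and permuted, hence rational).

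\textbf{Step 1: reduce to a limb containing a critical point.} By Theorem \ref{roeschyin}(2), for some $n$ the limb $L_{x_n}$ of $B_{k'}$ with $x_n=f^n(x)$ contains a critical point of $f^{p}$, where $B_{k'}=f^n(B_k)$ up to relabelling along the cycle. Since $f$ is hyperbolic, post-critically finite and of disjoint type, every critical point is one of the periodic centers $c_j$. So $L_{x_n}$ contains a point $w$ with $f^{i}(w)=c_j$ for some $j$ and some $i<p$.

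\textbf{Step 2: show that $x_n$ is eventually periodic.} This is where disjoint type enters. Each $B_j$ contains a unique critical point, so $f^{p_j}\colon \overline{B_j}\to\overline{B_j}$ is conjugate to $z\mapsto z^{D}$ on $\overline{\D}$ via the Böttcher map, which extends homeomorphically as in Case 1 of Lemma \ref{wring}. Write $t$ for the internal angle of $x_n$ in $B_{k'}$.

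The key observation is that only finitely many limbs of the cycle of $B_{k'}$ can contain a critical point of $f^p$. Indeed, the components $B_j$ meeting $L_{x_n}$ lie in $L_{x_n}$ entirely, and there are finitely many $c_j$. Hence the $f^{p}$-orbit of $x_n$, which must repeatedly return to roots of critical limbs, is confined to a finite set.

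To make this precise, I would argue as follows. The limb root $x_n$ is a cut point of $K$ separating $B_{k'}$ from the critical component $B_j$ (or from its preimage) lying in $L_{x_n}$. Connect the two Fatou components by the regulated arc, which is a finite tree in a post-critically finite polynomial. The point $x_n$ is the unique exit point of this arc from $\overline{B_{k'}}$. Forward images of such exit points between the finitely many periodic components lie again on regulated arcs between components of the finite post-critical cycle. Each such arc leaves each $\overline{B}$ at a unique point, so these exit points form a finite forward-invariant set. Therefore $x_n$ is eventually periodic, and so is $x$. A periodic point on $\partial B$ of a hyperbolic map is repelling, because $J$ is expanding.

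\textbf{Step 3: conclude.} Since $x$ is repelling preperiodic and $K(f)$ is connected, every external ray landing at $x$ has rational angle. In particular $\theta_{k,x},\theta'_{k,x}\in\QZ$, so $(\theta_{k,x},\theta'_{k,x})\in\L_\Q(f)$.

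The main obstacle is Step 2: proving rigorously that the exit points of regulated arcs form a finite invariant set. If the Hubbard-tree route proves awkward, I would instead argue through internal angles. The root of a critical limb for $z\mapsto z^D$ must be a point whose angle coincides with the landing angle of the ray from the center toward $c_j$. That angle is determined by finite post-critical data and is therefore rational, and $x$ is a preimage of such a point.
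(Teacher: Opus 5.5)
Your reduction (show that $x$ is preperiodic; hyperbolicity then makes it repelling, so every ray landing at $x$ has rational angle) is the same as the paper's. Your route to preperiodicity is genuinely different and sound, but the rigorous content sits in the paragraph you treat as heuristic.

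The ``key observation'' paragraph is already a complete proof once two points are made explicit.

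(i) Every point $x_m$ of the orbit roots a \emph{nontrivial} limb, so Theorem \ref{roeschyin}(2) can be reapplied at each $x_m$. Since $f$ has no critical points in $J$, it is injective near $x_m$. So the two distinct rays landing at $x_m$ map to two distinct rays landing at $f(x_m)$, and Theorem \ref{roeschyin}(1) gives $L_{f(x_m)}\neq\{f(x_m)\}$.

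(ii) The orbit is not ``confined to a finite set''. Rather, it visits the set of roots of limbs (of the components in the cycle) containing a point of $\mathrm{Crit}(f^{p})$ at infinitely many times. This set is finite, because limbs of a component are pairwise disjoint and $\mathrm{Crit}(f^{p})$ is finite. So two visits coincide, $x_a=x_b$ with $a<b$, and $x$ is preperiodic.

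The Hubbard-tree argument you offer as the precise version is unnecessary, and as written it has a gap. The critical point of $f^{p}$ in $L_{x_n}$ may be a strict preimage of some $c_j$, so it need not lie on the tree spanned by the critical cycles. Because $f$ folds regulated arcs at critical points, your assertion that forward images of exit points are again exit points of arcs between periodic components is unjustified. It can be repaired by using the regulated hull of $\bigcup_{m\le p}f^{-m}(P)$, with $P$ the postcritical set, which is a finite forward-invariant tree. The internal-angle fallback is circular. Rationality of the internal angle of a critical root is equivalent to preperiodicity of that root, which is the point at issue.

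The paper instead argues by contradiction with angular widths. If $x$ had infinite orbit, the roots $x_j=f^{jp_k}(x)$ would be pairwise distinct, so by disjointness of limbs only finitely many $L_{k,x_j}$ contain critical points of $f^{p_k}$. Beyond that point, $f^{p_k}$ maps the wake of $L_{k,x_j}$ univalently onto the wake of $L_{k,x_{j+1}}$, multiplying its angular width by $d^{p_k}$, which cannot continue indefinitely.

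In effect the paper re-derives the recurrence in Theorem \ref{roeschyin}(2) from disjointness plus expansion of angles. Your corrected argument instead quotes that recurrence as a black box and gets preperiodicity directly by pigeonhole. Yours is shorter; the paper's is more self-contained. Both rest on the same inputs: finiteness of the set of critical limbs, and nontriviality of the limbs along the orbit, which the paper also uses tacitly.
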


    \begin{proof}
    
        Since $(\theta_{k,x},\theta'_{k,x}) \in \L_\R(f)$, it suffices to show that $ \theta_{k,x},\theta'_{k,x} \in \QZ $.
        We prove it by contradiction.
        Suppose that there exist $k$ and $x \in \partial B_k^*$ such that $\theta_{k,x},\theta_{k,x}' \in \RZ \setminus (\QZ)$.
        It follows that $x$ must have infinite orbit under $f^{p_k}$.
        Let $x_j:=f^{jp_k}(x)$.
        By Theorem \ref{roeschyin}, since limbs are disjoint, for $j$ large, the limb $L_{k,x_j}$ contains no critical point of $f^{p_k}$.
        Let $E'_{k,x_j}:=\RZ \setminus \overline{E_{k,x_j}}$.
        Then we have $\LL(E'_{k,x_{j+m}})=d ^m\LL(E'_{k,x_{j}})$ for $j$ large.
        This is clearly impossible.

    \end{proof}

     \begin{figure}[h]\label{fig1}
  \begin{center}
   \vspace{2mm}
   \begin{minipage}{.48\linewidth}
    \includegraphics[width=\linewidth]{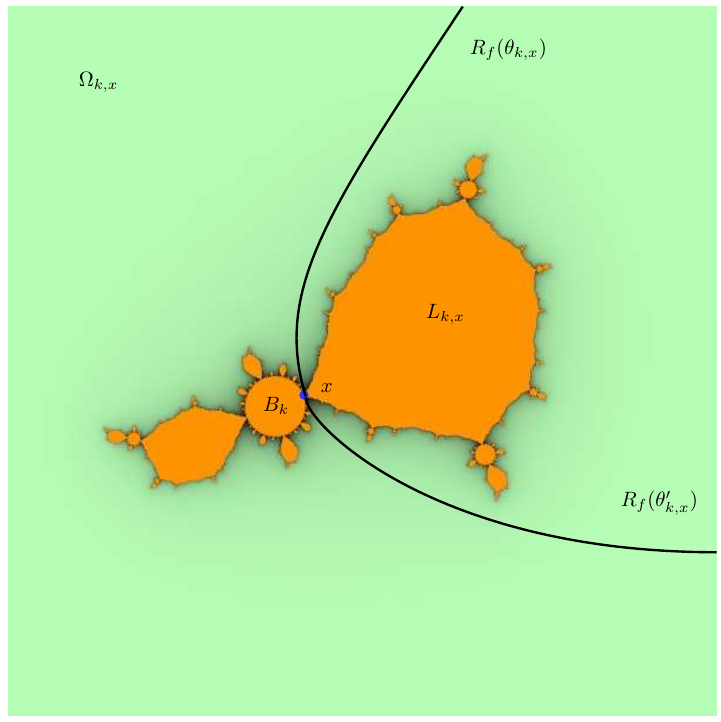}
    \caption{$K(f)$ }
  \end{minipage}
  \hspace{2mm}
  \begin{minipage}{.48\linewidth}
    \includegraphics[width=\linewidth]{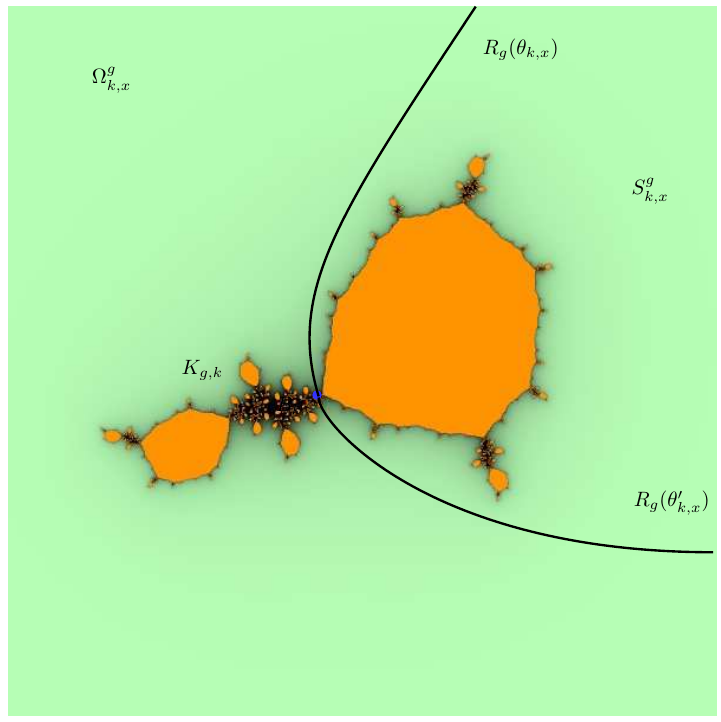}
    \caption{$K(g)$}
  \end{minipage}
 \end{center}
\end{figure}

   Let $g \in \comb(f)$ with no indifferent cycle, then $\L_\Q(g)=\L_\Q(f)$.
   By Claim \ref{rational}, for every $x \in \partial B_k$, there exists $(\theta_{k,x},\theta'_{k,x}) \in \L_\Q(g)$.
   Hence $\Omega_{k,x}^g$ be the connected of $\C \setminus (\overline{R_g(\theta_{k,x})} \cup \overline{R_g(\theta'_{k,x})})$ containing external rays $R_g(\theta)$ with $\theta \in E_{k,x} $. 
   The other connected component is denoted by $S_{k,x}^g:=\C \setminus \overline{ \Omega_{k,x}^g }$.
   Let 
   $$
        K_{g,k}:= \bigcap_{x \in \partial B_k^*} \overline{\Omega_{k,x}^g} \cap K(g).
   $$
   Then $K_{g,k}$ is a forward invariant compact set under $g^{p_k}$.

   \begin{clm}\label{filledin=}
      For every $k$, there are two distinct fixed points of $g^{p_k}$ in $K_{g,k}$.
   \end{clm}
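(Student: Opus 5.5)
The plan is to realize $g^{p_k}$ near $K_{g,k}$ as a quadratic-like map and then count its fixed points. Since $f$ is of disjoint type and post-critically finite, $c_k$ is a simple critical point and the only critical point of $f^{p_k}$ in $B_k$. Hence $\deg(f^{p_k}|_{B_k})=2$, and by \eqref{gammaeq} the boundary map is conjugate to $t\mapsto 2t$ on $\RZ$.

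\textbf{Fixed points of $f^{p_k}$.} The map $f^{p_k}$ has exactly two fixed points in $\bigcap_x\overline{\Omega_{k,x}}\cap K(f)=\overline{B_k}$:
\begin{itemize}
\item the center $c_k$;
\item the point $\beta_k:=\gamma(0)\in\partial B_k$.
\end{itemize}
The point $\beta_k$ is a repelling fixed point, since $f$ is hyperbolic. It is the landing point of finitely many rational external rays $R_f(\theta)$, $\theta\in\Theta_k$, permuted by $\tau_d^{p_k}$. By Claim \ref{rational} and $\L_\Q(g)=\L_\Q(f)$, the rays $R_g(\theta)$, $\theta\in\Theta_k$, land at a common point $\beta_k^g$. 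Because $\tau_d^{p_k}$ permutes $\Theta_k$, the point $\beta_k^g$ is fixed by $g^{p_k}$.

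\textbf{$\beta_k^g$ lies in $K_{g,k}$.} For $f$, every ray $R_f(\theta)$ with $\theta\in\Theta_k$ lies in $\overline{\Omega_{k,x}}$ for all $x\in\partial B_k^*$. This is a statement about the cyclic order of the angles $\Theta_k$ relative to the pairs $\theta_{k,x},\theta'_{k,x}$. The rays of $g$ with rational angles that land together realize the same unlinked pattern, so the same cyclic order persists for $g$. Hence $R_g(\theta)\subset\overline{\Omega^g_{k,x}}$ for all $x$, and so $\beta_k^g\in K_{g,k}$.

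\textbf{The second fixed point.} I would build a quadratic-like restriction, following the puzzle constructions of Subsection \ref{subsec42}.
\begin{enumerate}
\item Choose finitely many periodic or preperiodic roots $x_1,\dots,x_N\in\partial B_k^*$ whose limbs contain all critical points of $f$ other than $c_k$, the critical values, and the other cycles $B_j$, $j\neq k$ (finitely many suffice by Theorem \ref{roeschyin} and compactness).
\item Let $V$ be the domain bounded by an equipotential of $g$ and by the closed rays $\overline{R_g(\theta_{k,x_i})}\cup\overline{R_g(\theta'_{k,x_i})}$, taking the side $\Omega^g_{k,x_i}$ of each pair. To get $U\Subset V$, thicken slightly using nearby rational rays and the preimage pairs, as is done for puzzles.
\item Let $U$ be the component of $g^{-p_k}(V)$ containing $K_{g,k}$.
\end{enumerate}
The degree of $g^{p_k}:U\to V$ is computed purely from external angles. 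It equals the degree of $\tau_d^{p_k}$ restricted to the angle set $\bigcap_i \overline{E_{k,x_i}}$ onto its image, and this data is the same for $f$ and $g$. For $f$ the degree is $2$, so it is $2$ for $g$ as well. Thus $(g^{p_k},U,V)$ is a quadratic-like map whose filled Julia set contains $K_{g,k}$; by construction its filled Julia set should equal $K_{g,k}$.

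\textbf{Conclusion.} A quadratic-like map has exactly two fixed points counted with multiplicity, and both lie in its filled Julia set. A double fixed point would have multiplier $1$, which is impossible because $g$ has no indifferent cycles. Hence $g^{p_k}$ has two distinct fixed points in $K_{g,k}$, one of which is $\beta_k^g$.

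\textbf{Main obstacle.} The hard step is the degree count together with the identification $K(g^{p_k},U,V)=K_{g,k}$. These require the separating ray pairs used to bound $V$ to be rational, which is supplied by Claim \ref{rational}, and to be finitely many while still cutting off every other critical point of $g$. The difficulty is that the positions of the critical points of $g$ are a priori unknown. I would handle this by counting the critical points of $g^{p_k}$ in $U$ via Riemann–Hurwitz: compute the Euler characteristic of $U$, equivalently the degree, from the angular length of the boundary arcs, which is combinatorial data shared by $f$ and $g$.
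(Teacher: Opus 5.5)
Your overall route is the paper's: cut $B_k$ off by finitely many rational ray pairs (available for $g$ by Claim \ref{rational} and $\L_\Q(g)=\L_\Q(f)$), get a quadratic-like restriction of $g^{p_k}$, and rule out a double fixed point because $g$ has no indifferent cycles. The material on $\beta_k^g$ is not needed for this claim. The genuine gap is in how you choose the cutting roots.

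\textbf{Nesting and degree.} You pick roots whose limbs contain the critical points of $f$, the critical values and the other cycles, and then rely on thickening to get $U\Subset V$. Thickening only upgrades a nesting $U\subset V$ whose boundaries touch at repelling landing points. It cannot create the nesting, and with your choice $U\subset V$ fails as soon as the chosen set is not closed under $f^{p_k}$. Suppose $y$ is chosen but $z=f^{p_k}(y)$ is not. Then the common landing point $z^g$ of $R_g(\theta_{k,z})$ and $R_g(\theta'_{k,z})$ lies in $V$. So $U$ contains a neighbourhood of the landing point $y^g\in K_{g,k}$, and that neighbourhood meets $S^g_{k,y}$, which is disjoint from $V$.

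There is a second problem with the choice. The limbs that must be cut off are those containing critical points of $f^{p_k}$, for example points $w$ with $f^j(w)=c_i$ and $0<j<p_k$, not just critical points of $f$. Otherwise $\deg(g^{p_k}|_U)$ can exceed $2$.

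The paper handles both issues at once. It takes $O$ to be the smallest $f^{p_k}$-forward-invariant set containing the roots of limbs with critical points of $f^{p_k}$; this set is finite because these roots are rational. It then defines $V$ by the ray pairs at $O$ and $U$ by the pairs at $O'=f^{-p_k}(O)\cap\partial B_k$, and $O'\supset O$.

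\textbf{Identifying the filled Julia set.} You leave the equality $K(g^{p_k},U,V)=K_{g,k}$ as something that \emph{should} hold. It is exactly what places the two fixed points in $K_{g,k}$. The inclusion you do have, $K_{g,k}\subset K(g^{p_k},U,V)$, goes the wrong way for that purpose.

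The missing argument is the following. By Theorem \ref{roeschyin}(2), every $x\in\partial B_k^*$ has an iterate $x_j=f^{jp_k}(x)$ lying in $O$, so $\partial B_k^*=\bigcup_n f^{-np_k}(O)\cap\partial B_k$. Take $j$ minimal. Then the limbs $L_{k,x_i}$ with $i<j$ contain no critical points of $f^{p_k}$. Hence $g^{jp_k}$ maps $S^g_{k,x}$ conformally onto $S^g_{k,x_j}$; this is a statement about angular widths, so it transfers from $f$. Since $S^g_{k,x_j}\cap V=\emptyset$, every $z\notin K_{g,k}$ eventually leaves $V$, which gives $K(g^{p_k},U,V)\subset K_{g,k}$.

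This step again relies on the forward invariance of $O$. Your Riemann--Hurwitz and angular-length count only settles the degree, not this identification.
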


   \begin{proof}
       Let $O$ be the smallest forward invariant set under $f^{p_k}$ which contains $x \in \partial B_k^*$ such that $L_{k,x} \cap \crit(f^{p_k})\neq \emptyset$, and $O':=f^{-p_k}(O)\cap \partial B_k$.
       By Theorem \ref{roeschyin}, we have $\partial B_k^*=\bigcup_{n \geq 0} f^{-np_k}(O) \cap \partial B_k$.
       Define two simply-connected domains
       $$U_k^g:=\bigcap_{y \in O'}\Omega^g_{k,y} \cap |\varphi_g|^{-1}((1,+\infty)), \quad V_k^g:=\bigcap_{y \in O}\Omega^g_{k,y} \cap |\varphi_g|^{-1}((d,+\infty)) .$$
       Then $g^{p_k}: U_k^g \to V_k^g$ is a proper map of degree $2$.
       Since every periodic in $J(g)$ is repelling, we can apply the well-known thicken technique \cite{milnor1992local} to the domains $U_k^g,V_k^g$ to obtain larger domains $\tilde{U}_g^k \Supset U_g^k ,\tilde{V}_g^k \Supset V_g^k$ such that $g^{p_k}: \tilde{U}_k^g \to \tilde{V}_k^g$ is a polynomial-like map of degree $2$.  
       Let $K_k(g)$ denote its filled-in Julia set.
       According to the property of the thicken technique, we have 
       $$K_k(g)=\{ z \in U_k^g: f^{np_k}(z) \in U_k^g \mbox{ for every $n$}\}.$$
       Then by the Straightening Theorem, $K_k(g)$ contains two fixed points of $g^{p_k}$ counted with multiplicity.
       Since $g$ has no indifferent cycle, $K_k(g)$ contains two distinct fixed points of $g^{p_k}$.

       To finish the proof of claim, it suffices to show that $K_{g,k}=K_k(g)$.
       Since $K_{g,k}\subset U_k^g$ is invariant under $g^{p_k}$, we have $K_{g,k}\subset K_k(g)$.
       Conversely, for every $z \notin K_{g,k}  $, there exists $x \in \partial B_k^*$ such that $z \in S_{k,x}^g$.
       Let $j$ be the smallest integer such that $f^{jp_k}(x)=x_j \in O$.
       Then $g^{jp_k}: S_{k,x}^g \to S_{k,x_j}^g$ is a conformal map.
       It follows that $g^{jp_k}(z) \notin \Omega_{k,x_j}^g$.
       Therefore $ g^{jp_k}(z) \notin V_k^g$ which implies that $z \notin K_k(g)$.
       This shows that $K_k(g)\subset K_{g,k}$.
       We conclude that $K_{g,k}=K_k(g)$.

       \end{proof}

       Let $E_k^0 \subset E_k$ be set of all $\theta$ such that $R_f(\theta)$ lands at the unique fixed point $\beta_k$ of $f^{p_k}$ on $\partial B_k$.
       By the local orientational-preserving property of $f^{p_k}$ near $\beta_k$, $f^{p_k}$ must fix the external rays of angles in $E_k^0$.
       It follows that $\tau_d^{p_k}$ fixes every angle in $E_k^0$. 
       Since $\L_\Q(f)=\L_\Q(g)$, then all external rays with angles in $E_k^0$ land at a common fixed point $\beta_k(g) \in K_{g,k}$.
       Moreover, every external ray with angle in $E_k \setminus E_k^0$ does not land at $\beta_k(g)$.
       By Claim \ref{filledin=}, there is another fixed point $\alpha_k(g) \in K_{g,k}\setminus \{ \beta_k(g) \}$.
       We claim that $\alpha_k(g)$ must be attracting.
       We prove it by contradiction.
       If $\alpha_k(g)$ is repelling, then there exists $\theta^* \in E_k$ such that $R_g(\theta^*)$ land at $\alpha_k(g)$.
       Suppose that $R_f(\theta^*)$ land at $x 
       \in \partial B_k$, then $x$ must be a periodic point of $f^{p_k}$ on $\partial B_k$ with some period $q$.
       If $q\geq 2$, then  $R_f(\tau_d^{(q-1)p_k}(\theta^*))$ does not land at $x$.
       Thus, $(\tau_d^{(q-1)p_k}(\theta^*),\theta^*)\notin \L_\Q(f)$.
       Notice that $\alpha_k$ is a fixed point of $g^{p_k}$.
       It follows that $(\tau_d^{(q-1)p_k}(\theta^*),\theta^*)\in \L_\Q(g)$.
       This contradicts the fact that $\L_\Q(f)=\L_\Q(g)$.
       Therefore, $x$ is the unique fixed point $\beta_k$ of $f^{p_k}$ on $\partial B_k$.
       This implies that $R_g(\theta^*)$ land $\beta_k(g)$, and $\alpha_k(g) =\beta_k(g)$.
       This contradicts Claim \ref{filledin=}.
       We conclude that $\alpha_k$ must be attracting.
       Combining the fact that $\alpha_k$ for $1\leq k\leq d-1$ have disjoint orbits, we see that $g$ is hyperbolic.
       By Fact \ref{pcf}, there exists a hyperbolic post-critically finite $g_0 \in \qc(g)$.
       Since $\L_\Q(g_0)=\L_\Q(g)=\L_\Q(f)$ and $f,g_0$ are both hyperbolic and post-critically finite, by Fact \ref{rigidity}, we have $f=g_0$.
       It follows that $g \in \qc(f)$.
       Hence $\comb(f)\subset \qc(f)$.
       This finishes the proof.

\end{proof}



\bibliographystyle{plain}
\bibliography{reference} 
\end{document}